\documentclass[11pt,a4paper,reqno]{amsart}
\usepackage[T1]{fontenc}
\usepackage{lmodern}
\usepackage[american]{babel}
\usepackage{csquotes}
\usepackage[final]{microtype}
\UseMicrotypeSet[protrusion]{basicmath}
\microtypecontext{spacing=nonfrench}

\usepackage{mathtools}
\usepackage{amssymb}
\usepackage{stmaryrd}
\usepackage{mathrsfs}
\usepackage{bm}
\mathtoolsset{centercolon}
\numberwithin{equation}{section}

\usepackage[margin=1in]{geometry}
\usepackage{enumitem}
\setlist{itemsep=1pt,topsep=1pt,parsep=1pt}
\setlist[itemize]{label=\textendash,leftmargin=1.5em,labelsep=0.5em}
\usepackage{tikz}
\usepackage{caption}
\usepackage[
  backend=biber,
  style=alphabetic,
  maxnames=6,
  minnames=3,
  giveninits=true,
  doi=false,
  url=false,
  isbn=false,
  backref=false
]{biblatex}
\renewbibmacro{in:}{
  \ifentrytype{article}{}{\printtext{\bibstring{in}\intitlepunct}}
}

\usepackage{aliascnt}
\theoremstyle{plain}
\newtheorem{theorem}{Theorem}[section]
\newcommand{\newaliastheorem}[2]{
  \newaliascnt{#1}{theorem}
  \newtheorem{#1}[#1]{#2}
  \aliascntresetthe{#1}
  \expandafter\def\csname #1autorefname\endcsname{#2}
}
\newaliastheorem{lemma}{Lemma}
\newaliastheorem{claim}{Claim}
\newaliastheorem{proposition}{Proposition}
\newaliastheorem{corollary}{Corollary}
\newaliastheorem{conjecture}{Conjecture}
\theoremstyle{definition}
\newaliastheorem{definition}{Definition}
\newaliastheorem{example}{Example}
\theoremstyle{remark}
\newaliastheorem{remark}{Remark}

\usepackage{hyperref}
\hypersetup{
  colorlinks,
  linkcolor={red!50!black},
  citecolor={blue!50!black},
  urlcolor={blue!80!black}
}
\usepackage[capitalize, nameinlink]{cleveref}
\crefname{claim}{claim}{claims}
\Crefname{claim}{Claim}{Claims}

\newif\ifnotes
\notesfalse
\ifnotes
  \newcommand{\note}[1]{\textbf{\textcolor{red}{[#1]}}}
\else
  \newcommand{\note}[1]{}
\fi

\DeclarePairedDelimiter\abs{\lvert}{\rvert}
\newcommand*{\indic}{\bm{1}}
\newcommand*{\Var}{\mathbb{V}\mathrm{ar}}
\newcommand*{\Cov}{\mathbb{C}\mathrm{ov}}
\newcommand*{\EE}{\mathbb{E}}
\newcommand*{\E}{\mathbf{E}}
\let\P\relax
\newcommand*{\P}{\mathbf{P}}
\newcommand*{\PP}{\mathbb{P}}
\newcommand*{\N}{\mathbb{N}}
\newcommand*{\Z}{\mathbb{Z}}
\newcommand*{\R}{\mathbb{R}}
\newcommand{\tPP}{\widetilde{\PP}}
\newcommand{\tEE}{\widetilde{\EE}}
\newcommand{\tVV}{\widetilde{\mathbb{V}}\mathrm{ar}}
\newcommand{\tC}{\widetilde{\mathbb{C}}\mathrm{ov}}
\newcommand{\Rhat}{\widehat{R}}
\newcommand*{\de}{\mathop{}\!\mathrm{d}}
\newcommand*{\dd}{\de}
\renewcommand*{\th}{\vartheta}
\newcommand*{\e}{\mathrm{e}}
\newcommand{\disc}{\mathrm{disc}}
\newcommand{\sumtwo}[2]{\sum_{\substack{#1 \\ #2}}}
\newcommand{\w}{\omega}
\newcommand*{\llb}{\llbracket}
\newcommand*{\rrb}{\rrbracket}
\newcommand{\cF}{\mathcal{F}}
\newcommand{\cI}{\mathcal{I}}
\newcommand{\cM}{\mathcal{M}}
\newcommand{\cU}{\mathcal{U}}
\newcommand{\cV}{\mathcal{V}}
\newcommand{\cY}{\mathcal{Y}}
\newcommand{\cZ}{\mathcal{Z}}

\title[Fractional moments of the SHF and directed polymers]{Fractional moments of the Stochastic Heat Flow and\\2D Directed Polymers}
\author[Q. Berger]{Quentin Berger}
\address{Université Sorbonne Paris Nord, Laboratoire d'Analyse, Géométrie et Applications, CNRS UMR 7539, 99 Av. J-B Clément, 93436 Villetaneuse, France and Institut Universitaire de France}
\email{quentin.berger@math.univ-paris13.fr}
\author[N. Turchi]{Nicola Turchi}
\address{Dipartimento di Matematica e Applicazioni, Università degli Studi di Milano-Bicocca, via Cozzi 55, 20125 Milano, Italy}
\email{nicola.turchi@unimib.it}
\author[N. Zygouras]{Nikos Zygouras}
\address{University of Warwick, Department of Mathematics, Coventry, CV47AL, UK}
\email{n.zygouras@warwick.ac.uk}
\subjclass[2020]{Primary: 82B44; Secondary: 60K35, 82D60.}
\keywords{Change of Measure, Coarse-Graining, Directed Polymer in Random Environment, Disordered Systems, Size Bias, Stochastic Heat Equation, Stochastic Heat Flow}

\begin{document}

\begin{abstract}
  \noindent
  We estimate the fractional moments of the normalized mass assigned by the Critical 2D Stochastic Heat Flow to small balls.
  Our results also cover the discrete case corresponding to the 2D directed polymer model and provide estimates that are uniform in all parameters.
  One key takeaway of our results is that the vanishing of the fractional moments is completely governed by the divergence of the second moment.
  We use a quite robust method, by refining the change of measure argument and introducing a novel coarse-graining procedure, reducing the proof to essentially second moment estimates (in fact, we also provide sharp second moment estimates for directed polymers, of independent interest).
\end{abstract}

\maketitle

\section{Introduction and Main Results}

The Critical 2D Stochastic Heat Flow (SHF) is a universal continuum model for a variety of physical phenomena related to disordered systems.
It was constructed in \cite{CSZ23} as a universal scaling limit of the directed polymers model under the critical disorder scaling in spatial dimension~\(2\).
It also serves as a non-trivial notion of solution to the two-dimensional Stochastic Heat Equation (SHE)
\begin{align}\label{SHE}
  \partial_t u
  = \frac{1}{2}\Delta u+\beta \xi u, \qquad t>0, \, x\in \R^2 \,,
\end{align}
with \(\xi\) denotes space-time Gaussian white noise in spatial dimension \(2\), see~\cite{Tsai24}.
We stress that the singularity of \(\xi\) renders \eqref{SHE} ill-posed and that~\eqref{SHE} falls outside the classical solution theory frameworks such as regularity structures or paracontrolled calculus.

There has been tremendous progress in the study of singular SPDEs (see \cite{Hai14, GIP15, K16, D25} for some of the main landmarks in the field) and the exploration of statistical mechanics at the critical dimension has had a substantial history (see \cite{ADC21, KW25} for some important recent developments and references).
The Critical 2D SHF, however, presents a rare example of a non-Gaussian random field at the critical dimension.
We refer to \cite{CSZ24, CSZ25-icm} for reviews and questions on the topic.

It was shown in \cite{CSZ25} that the time marginals of the SHF are singular with respect to the Lebesgue measure; note that~\cite{GN26} quantifies the singularity of the support.
In this article we probe into the fine-scale structure of the SHF as a two-dimensional measure, through estimates on the fractional moments of the mass that it assigns on small balls.
Such fractional estimates are relevant to understanding the multifractal properties of the SHF, as outlined in Section \ref{sec:multifractal} below.

Estimates on integer moments of the SHF have played an important role in the construction of the model \cite{CSZ19-3rd, GQT21, CSZ23} and have been used to demonstrate features of intermittency \cite{GN25, LZ26, GN26}.
Such integer moment estimates are facilitated by a mapping to the exponential moments of the collision time of two-dimensional random walks or Brownian motions, as well as links to the ground states of the delta-Bose gas \cite{DFT94, DR04, GQT21}.
These links allow for the use of a variety of tools, such as operator theory \cite{DFT94, AGHKH05, GQT21, CSZ23}, Laplace methods and recursion analysis \cite{CSZ19-3rd, CZ23, LZ26} or the theory of the Gaussian Free Field \cite{GN25}.
All these powerful tools, however, become unavailable when dealing with fractional moments, as the key relation to the collisions of random walks, Brownian motions or to the delta-Bose gas do not extend to noninteger moments.

To proceed with our task of estimating fractional moments, we employ ideas that have been introduced in the context of disordered systems \cite{DGLT09} and refined in studies of the free energy of the directed polymer model \cite{Lac10a, BL17, JL24a}, which combine a change of measure argument with a coarse-graining scheme.
This method allows for sharp upper bounds on the free energy for the 2D directed polymer and sharp estimates on fractional moments of the mass assigned to balls for the SHF in a strong disorder regime, see \cite{BCT25} for details.
However, so far, the method  does not yield any non-trivial estimate of the mass assigned to \textit{balls with a vanishing radius}.
We introduce here two novelties:
\begin{enumerate}
  \item We develop a conceptually general change of measure argument (inspired but different from that of~\cite{BCT25}), that allows us to obtain a sub-optimal estimate on fractional moments for balls with a vanishing radius;
  \item We bootstrap these sub-optimal estimates through a \textit{nonhomogeneous} coarse-graining scheme, which takes into account the separation of scales of the model.
\end{enumerate}
In practice, we develop this approach in the context of the two-dimensional directed polymer model and we transfer the fractional-moment estimates obtained in this setting to the Critical 2D SHF, via a limiting procedure.
We provide a more detailed sketch of this general strategy in \Cref{sec:strategy}.
We also stress that a very recent article~\cite{Huang26} also investigates the fractional moments of the Critical 2D SHF (in the rest of the text we will abbreviate to just SHF): we comment on this result in \Cref{sec:lower-bound} below.

\subsection{Main results for the SHF: upper bound on the fractional moments}
\label{sec:SHFresults}

The SHF is a continuous-time, measure-valued process, which we will denote by \( \mathscr{Z}_{s,t}^{\th}(\dd x, \dd y) \) for \(0<s<t\).
Here, \(\dd x, \dd y\) are volume elements in \(\R^2\) indicating the starting and ending points of the flow at times \(s, t\), respectively.
The parameter \(\th\) plays an important role as it allows for the SHF to interpolate between the so-called \textit{weak disorder} (or \textit{subcritical}) regime and the \textit{strong disorder} (or \textit{supercritical}) regime.
Details on the emergence and the precise definition of the parameter \(\th\) will be given in \Cref{sec:DPREresults}, see in particular \eqref{choiceb}-\eqref{def:theta-N-beta}.

We will focus here on the marginal
\[
  \mathscr{Z}_{t}^{\th}(\dd x)
  \coloneqq \mathscr{Z}_{0,t}^{\th}(\dd x, \R^2)  \,,
\]
and we will use the notation
\[
  \mathscr{Z}_{t}^{\th}( \varphi)
  \coloneqq \int_{\R^2}  \varphi(x) \, \mathscr{Z}_{t}^{\th}(\dd x) \,.
\]
Notice that, if \(\varphi\) is a probability density on \(\R^2\), then we have \(\EE[\mathscr{Z}_{t}^{\th}( \varphi) ]=1\).
We will be particularly interested in the case where \( \varphi\) is the uniform distribution in a ball of radius \(r\), so we also introduce the notation:
\[
  \mathcal{U}_r(x)
  \coloneqq \frac{1}{\pi r^2} \, \indic_{B(0,r)}(x) \qquad \text{with} \qquad B(0,r)
  \coloneqq \{x\in\R^2 \colon \abs{x}
  \le r\} \,.
\]

Our first main result is the following upper bound on fractional moments.
One important point that we emphasize here is that the result is valid \textit{uniformly} in the parameters \(t,\th,r\), with \(r\leq \sqrt{t}\).
In particular it holds for balls with vanishing radius \(r\downarrow 0\) and/or diverging time \(t \to\infty\), as long as the second moment diverges (it is otherwise trivial).

\begin{theorem}[Averaged mass of the SHF on balls, upper bound]
  \label{thm:balls-SHF}
  There exist constants \(C,c>0\) such that, for any \(p \in (0,1)\), uniformly in \(t>0\), \(\th\in \R\) and \(r \in (0,\sqrt{t}]\), we have
  \begin{equation}
    \label{eq:balls-SHF}
    \EE\bigl[\mathscr{Z}_{t}^{\th}(\cU_r)^{p}\bigr]
    \leq C\, \EE\bigl[\mathscr{Z}_{t}^{\th}(\cU_r)^{2}\bigr]^{c\, p (p-1)} \,.
  \end{equation}
  In fact, we  have a more explicit upper bound: there exist constants \(C',c'>0\) such that, for any \(p \in (0,1)\), uniformly in \(t>0\), \(\th\in \R\) and \(r \in (0,\sqrt{t}]\), we have
  \begin{equation}
    \label{eq:balls-SHF-explicit}
    \EE\bigl[\mathscr{Z}_{t}^{\th}(\cU_r)^{p}\bigr]
    \leq C' \Bigl(1+ \log\Bigl(1+\frac{t}{r^2}\Bigr)\,\cV\bigl(t \,\e^{\th}\bigr) \Bigr)^{c'\, p (p-1)}\,,
  \end{equation}
  where \(\cV(T) \coloneqq \int_0^{\infty} \frac{T^s}{\Gamma(s+1)} \dd s\) is the so-called Volterra function (see~\eqref{eq:asymp-V} for its asymptotics as \(T\) goes to \(0\) or~\(\infty\)).
\end{theorem}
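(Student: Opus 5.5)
We plan to prove \Cref{thm:balls-SHF} at the level of the two-dimensional directed polymer, and then transfer it to the SHF by a limit. Concretely, with \(\beta_N\) in the critical window \eqref{choiceb}--\eqref{def:theta-N-beta}, the averaged partition function \(Z_{N}^{\beta_N}(\varphi_N)\), where \(\varphi_N\) is the discretized uniform density on a ball of radius \(r\sqrt N\), converges in law to \(\mathscr{Z}_t^{\th}(\cU_r)\). For \(p<1\), the map \(x\mapsto x^p\) together with uniform integrability (the second moment is bounded) gives convergence of the \(p\)-th moments. The second moments also converge to the SHF second moment, which is explicit in terms of \(\cV\). So it suffices to prove the discrete analogue of \eqref{eq:balls-SHF-explicit} uniformly in \(N\), the horizon, the disorder strength and the radius. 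The bound \eqref{eq:balls-SHF} then follows from the sharp second moment asymptotics \(\EE[\mathscr{Z}_t^{\th}(\cU_r)^2]\asymp 1+\log(1+t/r^2)\cV(t\e^{\th})\), which we will establish by a renewal computation for two-walk collision local times.

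\textbf{Step 1 (change of measure, sub-optimal bound).} By Hölder, for any positive \(g\) depending on the disorder,
\[
\EE[Z^p]\le \EE[g^{-p/(1-p)}]^{1-p}\,\tEE[Z]^p,
\qquad \dd\tPP=g\,\dd\PP .
\]
We take \(g\) to be an exponential tilt \(\exp(-\delta X-\delta^2\Var(X)/2)\), with \(X\) a quadratic (second-chaos) functional of the noise, essentially the centered two-point collision statistic built from the replica-overlap kernel restricted to the ball. Then \(\EE[g^{-q}]\) is bounded by Gaussian or hypercontractive estimates, uniformly, provided \(\Var(X)\) is normalized to be \(O(1)\). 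Moreover \(\tEE[Z]=\EE[Z g]\) equals \(\EE\) of \(g\) under the size-biased environment. The tilt lowers this by a factor \(\exp(-c\,\delta\,\EE_{\rm sb}[X])\). Under size bias, \(X\) acquires a mean comparable to the ratio \(\EE[Z^2]/\sqrt{\Var X}\), i.e.\ to a power of \(\log \EE[Z^2]\). This gives a sub-optimal decay, \(\EE[Z^p]\le C\,\EE[Z^2]^{-\epsilon}\) for some small \(\epsilon\), or even only \(\EE[Z^p]\le 1-\eta\) once \(\EE[Z^2]\) is large.

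\textbf{Step 2 (nonhomogeneous coarse-graining bootstrap).} This is where we expect the main obstacle. We decompose the time interval into dyadic-type scales adapted to the ball: times \(r^2 2^k\) for \(k\le \log_2(t/r^2)\). The log factor in \eqref{eq:balls-SHF-explicit} counts these scales, and \(\cV(t\e^\th)\) measures the disorder accumulated at the largest scale. Using the Chapman--Kolmogorov structure, the partition function splits as a product of nearly independent block contributions along the polymer path, and \(x\mapsto x^p\) is subadditive. We would then show that \(\EE[Z^p]\) is at most the product of the per-block fractional moments plus error terms from polymer spreading. Applying Step 1 to each block gives a factor \(1-\eta\) for each of about \(c\log(1+\EE[Z^2])\) effectively independent blocks. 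This turns the sub-optimal estimate into the power law \(\EE[Z^2]^{-c\,p(1-p)}\).

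The difficulties are controlling the correlations between blocks (spatial spreading at scale \(\sqrt{\text{time}}\)) and making the block sizes nonhomogeneous, so that each block carries \(O(1)\) second-moment growth. The \(p(p-1)\) dependence in the exponent will come from tracking the Hölder exponents in Step 1, which contribute a factor \(p\) times \(1-p\) for small tilts. Finally we will check the trivial regime, \(\EE[Z^2]\) bounded, where the statement holds with a suitable choice of \(C\).
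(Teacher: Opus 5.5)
\textbf{Step 1 does not produce the needed estimate.} Your architecture matches the paper's: a sub-optimal bound by change of measure, a nonhomogeneous coarse-graining bootstrap, then $N\to\infty$ via \Cref{thm:CSZ}. But a second-chaos statistic cannot give the shift you claim. For centered $X$ the size-biased shift is $\tEE_f[X]-\EE[X]=\Cov(Z,X)$ with $Z=Z^{\beta,\w}_N(f)$. If $X$ lies in the second chaos, this covariance only sees the second-chaos component $Z^{(2)}$ of $Z$, so $\tEE_f[X]\le\|Z^{(2)}\|_2\sqrt{\Var X}$. In the regime $\th(N,\beta)\le0$, i.e.\ $\sigma^2(\beta)R_N\le1$, one has $\|Z^{(2)}\|_2^2=\sigma^2(\beta)^2\sum_{n\le N}q_n(f,f)R_{N-n}\le(\sigma^2(\beta)R_N)^2\le1$, using $q_n(f,f)\le q_{2n}(0)$. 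The same holds for every fixed chaos order: the divergence of $\EE[Z^2]\asymp\log(\frac{N}{1+R^2})\cV(\e^{\th})$ is spread over a diverging number of chaos orders. So with $\Var X=O(1)$ the shift stays $O(1)$. It is neither of order $\EE[Z^2]/\sqrt{\Var X}$ nor a power of $\log\EE[Z^2]$, and it yields neither decay nor the fixed small per-block bound you need. You also never control the fluctuations of $X$ under the size-biased law, which is what would make $\tEE[g]$ small. (An exponential of a quadratic form need not even be integrable when $\w$ has only some exponential moments.) The missing idea is a proxy that carries \emph{all} chaos orders. The paper uses $X(f)=\sum_{i\le k}(Z^{\beta,\w}_{N_{i-1},N_i}(f)-1)$: strip partition functions on scales $N_i=N^{1-\alpha_0 2^{-i}}$, tuned so each piece has variance $\asymp1$. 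Then $\tEE_f[X]=\Var X\asymp k$ automatically. The real work is $\tVV_f[X]\lesssim k$: hypercontractivity handles the diagonal terms, and a collision-diagram computation shows the off-diagonal covariances decay like $N_i/N_{j-1}$. The Cauchy--Schwarz size-bias inequality of \Cref{lem:change-measure} then gives $\EE[Z^{1/2}]\lesssim k^{-1/2}$.

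\textbf{Step 2 mis-chooses the scales.} Summing over neighbouring spatial boxes costs a fixed entropy factor per block. Each block therefore needs a half moment below a fixed small constant (the $1/300$ in \eqref{eq:all-scales}); a factor $1-\eta$ per block is useless. So each block must carry a \emph{large} fixed amount of second-moment growth, not $O(1)$. Dyadic blocks $r^22^{k-1}\to r^22^k$ carry only bounded variance when $\th\le0$ (\Cref{lem:var-strip}). If each of the $\log_2(t/r^2)$ such blocks contracted by a fixed factor, you would get decay $(t/r^2)^{-c}$. That is impossible, since log-convexity of moments gives $\EE[Z^{1/2}]\ge\EE[Z^2]^{-1/2}$. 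The paper instead takes doubly exponential scales $M_i=N^{1-\alpha_0\rho^{-i}}$ with $\rho$ large, so that $\log(\frac{M_i-M_{i-1}}{1+M_{i-1}})/(1+|\th(M_i-M_{i-1},\beta)|)\ge c'(\rho-1)$. This gives $\ell=\log_\rho\bigl(\log(\frac{N}{1+R^2})/(1+|\th|)\bigr)$ blocks, logarithmic in $\EE[Z^2]$. That matches your later block count but not your dyadic choice, and $\log(1+t/r^2)$ is not the number of blocks. Your plan also omits the regime $\th>0$, where the block bound is not directly available. The paper reduces to $\beta_c$ with $\th(N,\beta_c)=0$ by monotonicity in $\beta$, uses \Cref{lem:monotone}, and applies the $\exp(-c\e^{\th})$ bound of \cite{BCT25} to the last block. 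Finally, the $p(p-1)$ exponent is most simply obtained from the half moment via \Cref{lem:frac-moment}, rather than by tracking Hölder exponents.
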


\Cref{thm:balls-SHF} is a signature of intermittency as~\eqref{eq:balls-SHF} shows that whenever we are in a regime where the second moment of \(\mathscr{Z}_{t}^{\th}(\cU_r)\) blows up, then its fractional moments \(p\in (0,1)\) go to zero, which also implies that \(\mathscr{Z}_{t}^{\th}(\cU_r)\) converges to \(0\) in probability.
This is consistent with the result of both \cite{CSZ25}, which shows the singularity of the SHF with respect to the Lebesgue measure, and the more recent result \cite{GuTsai26}, which shows that the Lebesgue volume of the set \(\{ x\in D \colon  \mathscr{Z}_{t}^{\th}(\cU_r(x)) \notin(\abs{\log r}^{-\frac{1}{2}-\varepsilon}, \abs{\log r}^{-\frac{1}{2}+\varepsilon}) \}\), with \(D\subset\R^2\) bounded, tends to \(0\) in~\(L^1(\PP)\) as~\(r\downarrow 0\).

We also stress that a corresponding lower bound on fractional moments may be achieved, see \Cref{sec:lower-bound} below, which highlights the fact that the vanishing of the fractional moment is in fact completely governed by the divergence of the second moment.

Let us conclude here by mentioning that going from~\eqref{eq:balls-SHF} to \eqref{eq:balls-SHF-explicit} (or vice-versa) is a simple matter of estimating the second moment.
The subtlety is that the statement of \Cref{thm:balls-SHF} is \textit{uniform in the parameters \(t>0\), \(\th\in \R\), \(r\in (0,\sqrt{t}]\)}, so one needs a second moment estimate which holds uniformly in the parameters, as follows.

\begin{proposition}[Variance of the SHF]
  \label{prop:var-SHF}
  There are constants \(c_1,c_2\), \(c_3,c_4\) such that, for any \(t>0\), \(\th \in \R\) and  \(r>0\),
  \[
    c_1  \log \Bigl(1+\frac{t}{r^2}\Bigr) \, \mathcal{V}\bigl( c_2 t\,\e^{\th} \bigr)
    \leq \Var\bigl(\mathscr{Z}_{t}^{\th}(\cU_r)\bigr)
    \leq c_3  \log \Bigl(1+\frac{t}{r^2}\Bigr) \, \mathcal{V}\bigl( c_4 t\,\e^{\th} \bigr) \,.
  \]
\end{proposition}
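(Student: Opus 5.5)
We will use the explicit second-moment formula for the SHF from \cite{CSZ23}:
\[
  \Var\bigl(\mathscr{Z}_{t}^{\th}(\varphi)\bigr)
  = \iint \varphi(x)\,\varphi(x')\, K_t^{\th}(x,x')\,\dd x\,\dd x' \,,
\]
with kernel
\[
  K_t^{\th}(x,x') = \pi \int_{0<s<u<t} g_{s/2}(x-x')\, G_\th(u-s)\,\dd s \,\dd u \,.
\]
Here \(g_\sigma\) is the heat kernel and \(G_\th(v)=\int_0^\infty \frac{\e^{(\th-\gamma)w} w v^{w-1}}{\Gamma(w+1)}\dd w\) is the renewal density of the Dickman subordinator. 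In words, the walks meet for the first time at time \(s\) at a point \(z\), where the overlap contributes \(g_{s/2}(x-x')\) after integrating \(z\). They then stay ``pinned'' over a time interval of length \(u-s\), with density \(G_\th\). After time \(u\) the remaining contribution equals \(1\), because the endpoint is integrated over all of \(\R^2\). Integrating against \(\cU_r\otimes\cU_r\) reduces everything to a product-type estimate of
\[
  \int_0^t \Bigl(\iint \cU_r(x)\,\cU_r(x')\, g_{s/2}(x-x')\Bigr)\, \Phi_\th(t-s)\,\dd s \,,
  \qquad \text{where } \Phi_\th(\tau)\coloneqq\int_0^\tau G_\th(v)\,\dd v \,.
\]

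First step: estimate the spatial factor. We have \(h_r(s)\coloneqq (\cU_r * \cU_r * g_{s/2})(0)\asymp 1/(r^2+s)\) uniformly. The \(s\)-integral of \(h_r\) over \([0,t]\) is therefore \(\asymp\log(1+t/r^2)\).

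Second step: estimate \(\Phi_\th\). A change of variables in the Gamma integral gives \(G_\th(v)=\frac1v\,\widetilde G(v\e^{\th})\) up to harmless constants. Hence \(\Phi_\th(\tau)\) is a function of \(\tau\e^\th\) alone, comparable to \(\cV(c\,\tau\e^{\th})\). Indeed,
\[
  \int_0^T \frac{T'^{w-1}}{\Gamma(w)}\,\dd T'=\frac{T^{w}}{\Gamma(w+1)}\,,
\]
so integrating \(G\) produces exactly the Volterra integrand. The factors \(\e^{-\gamma w}\) and \(w\) against \(1/\Gamma(w)\) can be absorbed by changing \(T\) to \(cT\) and adjusting the constants. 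This is where the constants \(c_2,c_4\) arise, using the asymptotics \eqref{eq:asymp-V}: \(\cV(T)\sim 1/\log(1/T)\) as \(T\to0\) and \(\cV(T)\sim \e^{T}\) as \(T\to\infty\).

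Third step: combine the two factors. The upper bound is immediate, since \(\Phi_\th\) is increasing: bound \(\Phi_\th(t-s)\le\Phi_\th(t)\lesssim\cV(c_4t\e^\th)\) and integrate \(h_r\). For the lower bound, restrict the \(s\)-integral to \(s\le t/2\), so that \(\Phi_\th(t-s)\ge\Phi_\th(t/2)\gtrsim\cV(c_2t\e^\th)\). The remaining integral \(\int_0^{t/2}h_r\) is still \(\gtrsim\log(1+t/r^2)\). This holds also when \(r^2\gg t\), where both sides are of order \(t/r^2\).

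The main obstacle is the uniformity of the comparison \(\Phi_\th(\tau)\asymp\cV(c\,\tau\e^\th)\) across all regimes, \(\tau\e^\th\to0\) and \(\to\infty\), with only a change of constant inside \(\cV\). I would handle this by comparing the integrands \(\frac{(cT)^w}{\Gamma(w+1)}\) pointwise against \(\frac{\e^{-\gamma w}T^w}{\Gamma(w+1)}\) with polynomial factors in \(w\). Polynomial factors \(w^{\pm1}\) are dominated by \(\e^{\pm\epsilon w}\), and this is precisely absorbed by rescaling \(T\).

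One caveat for \(w\) near \(0\): the factor \(w\) can only be bounded above, not below, by an exponential. The lower bound therefore needs a separate check there. I would use the fact that for small \(T\) the mass of \(\cV\) sits at \(w\approx 1/\log(1/T)\), where the extra factor \(w\) is compensated by the integration of \(G\) gaining a \(1/w\).
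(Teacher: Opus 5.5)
Your proposal is correct and follows the paper's proof. Both arguments rest on the explicit covariance formula. The upper bound uses $\Phi_\th(t-s)\le\Phi_\th(t)$, and the lower bound restricts the time integral to a fixed fraction of $[0,t]$ (the paper takes $s\le(1-\e^{\gamma-1})t$, you take $s\le t/2$).

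The ``main obstacle'' you flag does not actually arise. Since $w/\Gamma(w+1)=1/\Gamma(w)$ and $\int_0^\tau v^{w-1}\dd v=\tau^w/w$, one has exactly $\Phi_\th(\tau)=\int_0^\tau G_\th(v)\dd v=\cV(\tau\,\e^{\th-\gamma})$, which is the identity the paper uses. So neither the pointwise comparison of integrands nor the separate small-$w$ check is needed.
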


Let us mention the following asymptotics for the Volterra function:
\begin{equation}
  \label{eq:asymp-V}
  \mathcal{V}(T) \sim  \e^T \quad \text{as } T \to+\infty \,, \qquad \mathcal{V}(T) \sim \frac{1}{\log(1/T)} \quad \text{as } T\to 0^+ \,.
\end{equation}
In particular, the second moment \(\EE[\mathscr{Z}_{t}^{\th}(\cU_r)^{2}]\) diverges (or equivalently \(\Var(\mathscr{Z}_{t}^{\th}(\cU_r))\) diverges) for two possibly different reasons (say in the regime \(r\in (0,\sqrt{t}]\)): either \(t\e^{\th} \to \infty\) or \(r/\sqrt{t} \to 0\) (sufficiently fast if \(t\e^{\th} \downarrow 0\)).

\subsection{A non-uniform result and a conjecture}
\label{sec:lower-bound}

A very recent result by Gu and Tsai~\cite{GuTsai26} (see also \cite{CD24, CSZ25} for results in the same spirit but in the `subcritical' and `quasicritical' regimes) shows that, for any fixed \(t>0\) and \(\th \in \R\), there is some deterministic constant \(\alpha_r = \alpha_{r}(t,\th)\) with \(\lim_{r\downarrow 0} \alpha_r =0\) such that
\begin{equation}
  \label{eq:log-normality}
  \frac{1}{\sqrt{\log \log \frac1r}}\Bigl(\log \mathscr{Z}_{t}^{\th}(\cU_r) + \frac{1+\alpha_r}{2} \log \log \frac1r\Bigr) \xrightarrow[\;r\to 0\;]{(d)} \mathcal{N}(0,1) \,,
\end{equation}
where \(\mathcal{N}(0,1)\) is a standard normal random variable.
This gives some important information on the mass assigned to small balls by the SHF, but one cannot deduce our bounds~\eqref{eq:balls-SHF}-\eqref{eq:balls-SHF-explicit} on the fractional moments, since~\eqref{eq:log-normality} does not yield information on the right tail \(\log \mathscr{Z}_{t}^{\th}(\cU_r)\) far from the normal regime covered by~\eqref{eq:log-normality}.

During completion of this paper, it was announced in~\cite{Huang26} that for all \(p\in (0,1)\) and all \(t,\vartheta\), it holds
\begin{equation}
  \label{eq:sharp}
    \EE\bigl[\mathscr{Z}_{t}^{\th}(\cU_r)^{p}\bigr]
    = \log \Bigl(\frac{1}{r}\Bigr)^{\frac{p(p-1)}{2} +o(1)} \qquad \text{ as } r\downarrow 0\,,
\end{equation}
where the dependence on \(t,\th\) is hidden in the \(o(1)\).
Let us also stress that this result was first obtained for integer moments in \cite{LZ26}.

The result~\eqref{eq:sharp} is in some sense sharper than our \Cref{thm:balls-SHF}, because it provides the correct behavior for \(\log \EE[\mathscr{Z}_{t}^{\th}(\cU_r)^{p}]\) in the limit \(r\downarrow 0\), and in particular identifies the correct exponent \(\frac12 p(p-1)\).
Note however that, contrary to our \Cref{thm:balls-SHF}, the parameters \(t,\th\) in \eqref{eq:sharp} are required to be fixed and one only sends \(r\) to \(0\).
In other words, our result is \emph{uniform} in the parameters \(t,\th\) in the small ball regime \(r\leq \sqrt{t}\).
We also believe that an advantage of our approach is that, even though it does not yield the sharpest result, it provides general tools to deal with fractional moments of disordered systems and could be useful in a broader context.

Let us now formulate the conjecture that the exponent \(\frac{1}{2}p(1-p)\) is in fact valid for all \(p\in \R\) and holds uniformly in the parameters \(t,\vartheta\), \(r\leq \sqrt{t}\).

\begin{conjecture}
  \label{1/2conj}
  As soon as \(\EE\bigl[ \mathscr{Z}_{t}^{\th}(\cU_r)^2\bigr] \to\infty\)  with \(r\leq \sqrt{t}\), then for any \(p\in \R\) we have that
  \begin{equation}
    \label{eq:conjecture}
    \EE\bigl[\mathscr{Z}_{t}^{\th}(\cU_r)^{p}\bigr]
    = \EE\bigl[ \mathscr{Z}_{t}^{\th}(\cU_r)^2\bigr]^{\frac{p(p-1)}{2} +o(1)}  \,,
  \end{equation}
  where \(o(1)\) is a quantity that depends on \(t,\th,r,p\) and goes to \(0\) as soon as \(\EE\bigl[ \mathscr{Z}_{t}^{\th}(\cU_r)^2\bigr] \to\infty\).
\end{conjecture}

This conjecture thus suggests that the divergence of the second moment of the SHF determines the asymptotic behavior of all \(p\)-th moments.
As mentioned above, this conjecture is proven for integer moments \(p\in \N\) in \cite{LZ26}. In the case of fractional moments \(p\in (0,1)\), it is supported by \Cref{thm:balls-SHF} and \eqref{eq:sharp}.
On the other hand, negative moments have so far been elusive.
Even though some first results on negative tails of the SHF have appeared in~\cite{N25mart}, these are still not enough to conclude information on negative moments.
Our belief that \eqref{eq:conjecture} is still valid for negative moments stems from the approximate log-normality of the SHF in small scales, see~\eqref{eq:log-normality}.
Notice that the relation \(\EE[Z^p] = \EE[Z^{2}]^{\frac{p(p-1)}{2}}\) holds for mean-one log-normal distributions.

\subsection{Main results for directed polymers}
\label{sec:DPREresults}

In this subsection we introduce the directed polymer model and its relation to the SHF.
We then present our main theorems in this context: their proofs constitute the bulk of the paper, and the corresponding results on the SHF then follow by taking a scaling limit of the polymer partition functions.

We consider a family of i.i.d.\ random variables \((\omega(n,x))_{n\in \N, x\in \Z^2}\) with mean \(0\), variance \(1\) and finite exponential moments \(\lambda(\beta)\coloneqq \log \EE\bigl[ e^{\beta \omega(n,x)} \bigr]<\infty\) for \(\beta\in[-3,3]\).
We also consider a simple, symmetric random walk \(S=(S_n)_{n\geq 0}\) on \(\Z^2\) and we denote its distribution and expectation when starting from initial position \(S_0=x\) by \(\P_x \) and \(\E_x\), respectively.

The point-to-plane partition function for a random walk (the `polymer') \(S\) starting at \(x\in \Z^2\) is defined by
\begin{equation}
  \label{eq:polymeas}
  Z_{N}^{\beta,\w}(x)
  \coloneqq \E_x\biggl[\exp\Bigl(\sum_{n=1}^{N} (\beta\w(n,S_n)-\lambda(\beta))\Bigr)  \biggr]\,,
\end{equation}
and when \(x=0\) we simply write \(Z_{N}^{\beta,\w}\coloneqq Z_{N}^{\beta,\w}(0)\).
We will also consider the case when the random walk starts from an initial distribution \(f\) and write
\[
  Z_{N}^{\beta,\w}(f)
  \coloneqq \sum_{x\in \mathbb{Z}^2} f(x)\, Z_{N}^{\beta,\w}(x) \,.
\]

\subsubsection*{The Critical 2D SHF}

In \cite{CSZ23}, the authors construct the unique limit of the field of partition functions\footnote{In fact, \cite{CSZ23} construct the unique limit of the field of point-to-point partition functions as measures on \(\R^2\times \R^2\), but we only consider here the first marginal.}
\begin{align}\label{preSHF}
  \cZ_{N; \,t}^{\beta_N}(\dd x)
  \coloneqq Z _{[ Nt ]}^{\beta_N,\omega} \bigl( \llbracket \sqrt{N}x \rrbracket \bigr) \dd x\, , \qquad 0
  \le t<\infty \,,
\end{align}
where \([\cdot]\) maps a real number to its nearest, even integer neighbor, and \(\llbracket\cdot\rrbracket\) maps points of~\(\R^2\) to their nearest, even integer point on \(\Z^2_\text{even}\coloneqq \{ (z_1,z_2)\in\Z^2:z_1+z_2\in 2\Z \}\), and \(\dd x\) is the Lebesgue measure on \(\R^2\).

To explain how the temperature parameter \(\beta_N\) is (critically) tuned in \cite{CSZ23}, let us introduce
\[
  \sigma^2(\beta)
  \coloneqq  \e^{\lambda(2\beta)-2\lambda(\beta)}-1 \qquad \text{ and } \qquad R_N
  \coloneqq \sum_{n=1}^{N} \P(S_{2n}
  =0) \,,
\]
and let us recall the asymptotics for the local time \(R_N\), see \cite{CSZ19-Dickman}:
\begin{align}\label{RN-asym}
  \pi R_N
  = \log N + \alpha +o(1), \quad \text{with} \quad \alpha
  \coloneqq 4\log 2 +\gamma -\pi,
\end{align}
where \(\gamma\) is the Euler--Mascheroni constant.
Then, we let \((\beta_N)_{N\geq 1}\) be tuned in the following critical window:
\begin{align}\label{choiceb}
  \sigma^2(\beta_N)
  = \frac{1}{R_N} \Bigl(1+\frac{\th+o(1)}{\log N} \Bigr),
\end{align}
where \(\th\in \R\) is a disorder strength / interpolation parameter and \(o(1)\) denotes asymptotically negligible corrections as \(N\to \infty\) (see also \cite{BC98} for the origins of this critical scaling in the  case of SHE).
We can now state (a consequence of) the main result of \cite{CSZ23}.

\begin{theorem}[\cite{CSZ23}]
\label{thm:CSZ}
  Let \((\beta_N)_{N\geq 1}\) be as in~\eqref{choiceb} for some \(\th\in\R\) and let \(\bigl( \cZ_{N; \,t}^{\beta_N}(\dd x) \bigr)_{0\le t<\infty}\) be defined as in \eqref{preSHF}.
  Then, as \(N\rightarrow\infty\), the process of random measures \((\cZ_{N; t}^{\beta_N}(\dd x))_{0\le t<\infty}\) converges to a unique limit \((\mathscr{Z}_{t}^{\th}(\dd x))_{0\le t<\infty}\), which is a marginal of the \emph{Critical 2D Stochastic Heat Flow}.
  In particular,
  \[
    \forall \, \varphi\in \mathcal{C}_c^{\infty}(\R^2), \qquad \int_{\R^2} \varphi(x) \cZ_{N; \,t}^{\beta_N}(\dd x)
    \xrightarrow[\;N\to\infty]{(d)} \int_{\R^2} \varphi(x) \mathscr{Z}_{t}^{\th}(\dd x)
    \eqqcolon \mathscr{Z}_{t}^{\th}(\varphi) \,.
  \]
\end{theorem}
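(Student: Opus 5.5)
This statement is the main result of \cite{CSZ23}, restricted to the first marginal, so the plan is to reproduce the architecture of that work rather than derive it from anything earlier in the paper. There are two steps: tightness of the random measures \(\cZ_{N;t}^{\beta_N}\), and uniqueness of subsequential limits. For the statement about \(\varphi\in\mathcal{C}_c^\infty(\R^2)\), it is enough to show that \(\cZ_{N;t}^{\beta_N}(\varphi)\) converges in distribution for each such \(\varphi\) and each fixed \(t\). Convergence of the process in \(t\) then needs in addition tightness in time and finite-dimensional convergence, which the same arguments give jointly.

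\emph{Tightness.} Since \(\EE[\cZ_{N;t}^{\beta_N}(\varphi)] \to \int\varphi\), the measures are tight on bounded sets in the vague topology by first-moment bounds alone. To control the fluctuations I would compute the second moment exactly via the polynomial chaos expansion
\[
Z_N^{\beta,\w}(x)=1+\sum_{k\ge1}\sigma(\beta)^k\!\!\sum_{n_1<\dots<n_k}\sum_{x_1,\dots,x_k}\prod_{i} q_{n_i-n_{i-1}}(x_i-x_{i-1})\,\eta_{n_i,x_i},
\]
where \(\eta=(\e^{\beta\w-\lambda(\beta)}-1)/\sigma(\beta)\) are normalized i.i.d.\ variables and \(q_n\) is the random-walk kernel. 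The second moment is then a renewal sum in the variable \(\sigma^2(\beta_N)\,\P(S_{2n}=0)\). Under the tuning \eqref{choiceb}, with the asymptotics \eqref{RN-asym}, this is the Dickman-type renewal of \cite{CSZ19-Dickman}. Its limit is an explicit kernel involving the Volterra function, and this kernel is finite after averaging against smooth \(\varphi\). Uniform \(L^2\) bounds follow, and so does tightness of \(\cZ_{N;t}^{\beta_N}(\varphi)\).

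\emph{Uniqueness.} This is the main obstacle. Because of the logarithmic divergence of the \(L^2\) kernel at the diagonal, moments alone are not known to determine the law. The plan has three parts.
\begin{enumerate}
\item Perform a coarse-graining at mesoscopic scale \(\varepsilon N\) in time and \(\sqrt{\varepsilon N}\) in space. This approximates \(\cZ_{N;t}^{\beta_N}(\varphi)\) in \(L^2\), uniformly in \(N\), by a multilinear polynomial \(\mathscr{Z}^{(\mathrm{cg})}_{\varepsilon}(\varphi\,|\,\Theta_{N,\varepsilon})\). This polynomial has the same structure for every \(N\): heat kernels at scale \(\varepsilon\), evaluated in coarse-grained variables \(\Theta_{N,\varepsilon}\) which are themselves localized polymer partition functions on mesoscopic blocks.
\item Show that the \(\Theta_{N,\varepsilon}\) have bounded fourth moments uniformly in \(N\). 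This uses the hypercontractivity and operator estimates of \cite{CSZ23}, since higher moments require sharper control than the renewal computation.
\item Apply a Lindeberg principle to the coarse-grained polynomial. It shows that, for fixed \(\varepsilon\), the law of \(\mathscr{Z}^{(\mathrm{cg})}_\varepsilon\) is asymptotically insensitive to the underlying disorder. Hence two subsequential limits, or the limits along \(N\) and \(M\), differ by at most \(o_\varepsilon(1)\) in a suitable Wasserstein-type distance.
\end{enumerate}
Letting \(\varepsilon\downarrow0\) identifies a unique limit. The delicate points are the \(L^2\) approximation error, which must vanish as \(\varepsilon\to0\) uniformly in \(N\), and the uniform fourth-moment bound. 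I expect the latter to be the hardest step, because near criticality the naive bounds blow up, and handling it requires the functional-inequality approach for moments of collision local times.
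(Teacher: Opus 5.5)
Your proposal matches the paper. The paper does not prove this statement but imports it directly from \cite{CSZ23}, and your outline is a faithful summary of that reference's architecture: tightness from moment bounds via the chaos expansion and the Dickman-type renewal, then uniqueness via an \(L^2\) coarse-graining at scale \(\varepsilon\), uniform fourth-moment bounds on the coarse-grained disorder \(\Theta_{N,\varepsilon}\), a Lindeberg principle for the coarse-grained multilinear polynomial, and finally \(\varepsilon\downarrow 0\). As in the paper, all the genuine work is delegated to \cite{CSZ23}.
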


A different point of view on the critical scaling window \eqref{choiceb} was introduced in \cite{BCT25}.
More precisely, given any \(\beta > 0\) and \(N\in\N\), the interpolating parameter (which determines the `disorder strength') is defined as:
\begin{equation}
  \label{def:theta-N-beta}
  \th(N,\beta)
  \coloneqq \pi R_N - \frac{\pi}{\sigma^2(\beta)} \,.
\end{equation}
In particular, if we are in a regime where \(N\to\infty\), \(\beta \downarrow 0\) in such a way that \(\th(N,\beta) \to \th \in \R\), we then recover \eqref{choiceb}.
The advantage of definition~\eqref{def:theta-N-beta} is that it also works \emph{outside} of the critical window.
In particular, it allows us to formulate results that hold uniformly in the parameters \(\beta\in (0,1)\) and \(N\in \N\), only in terms of \(\th(N,\beta)\); we mostly think about \(N\to\infty\) and \(\beta\downarrow 0\).
In fact the subcritical, critical and supercritical regimes can be defined as follows: we are in the \emph{subcritical regime} if \(\th(N,\beta) \to-\infty\), in the \emph{critical regime} if \(\th(N,\beta) \to \th\in \R\) and in the \emph{supercritical regime} if \(\th(N,\beta) \to+\infty\).
We adopt the formulation \eqref{def:theta-N-beta} in this paper, as well.

\subsubsection*{Main results for directed polymers}

We now state our result in the polymer setting, which captures the asymptotics of the fractional moments of the polymer partition starting from small balls.
We first denote by \(U_R\) the uniform distribution on the (discrete) ball of radius \(R\): for any \(R\geq 0\), let
\[
  U_R(x)
  \coloneqq \frac{1}{\abs{B(0,R) \cap \Z_{\mathrm{even}}^2}} \indic_{B(0,R)} (x) \qquad \text{ for } x\in \Z_{\mathrm{even}}^2 \,.
\]

Note that \(U_0 = \delta_0\) is the Dirac mass (Kronecker delta) at \(0\), so our result also applies to the point-to-plane partition function \(Z_{N}^{\beta,\w}=Z_{N}^{\beta,\w}(U_0)\).

\begin{theorem}[Polymers starting uniformly from a small ball]
  \label{thm:balls-polymers}
  There are constants \(c,C>0\) such that, for all \(p\in (0,1)\) all \(N\in \N\), 
  all \(\beta\in (0,1)\), and all \(0\leq R \leq \sqrt{N}\), we have
  \begin{equation}
    \label{eq:balls-polymers}
    \EE\bigl[ Z_{N}^{\beta,\w}(U_R)^p \bigr]
    \leq C\, \EE\bigl[ Z_{N}^{\beta,\w}(U_R)^2 \bigr]^{c\, p(p-1)} \,  .
  \end{equation}
  More explicitly, let \(\th(N,\beta)\) be defined as in \eqref{def:theta-N-beta}.
  Then there are constants \(c',C'\) such that, for all \(p\in (0,1)\), all \(N\in \N\), all \(\beta\in (0,1)\), and all \(0\leq R \leq \sqrt{N}\), we have
  \begin{equation}
    \label{eq:balls-polymers-explicit}
    \EE\bigl[ Z_{N}^{\beta,\w}(U_R)^p \bigr]
    \leq C'\, \Bigl(1+\log \Bigl(1+\frac{N}{1+R^2}\Bigr)\, \cV\bigl(\e^{\th(N,\beta)}\bigr) \Bigr)^{c'\, p(p-1)} \,,
  \end{equation}
  where \(\cV(T) \coloneqq \int_0^{\infty} \frac{T^s}{\Gamma(s+1)} \dd s\) is the Volterra function.
\end{theorem}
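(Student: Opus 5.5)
We first reduce \eqref{eq:balls-polymers} to \eqref{eq:balls-polymers-explicit} by a discrete analogue of \Cref{prop:var-SHF}. We write the second moment as a renewal sum,
\[
\EE\bigl[ Z_{N}^{\beta,\w}(U_R)^2 \bigr] = \E^{\otimes 2}_{U_R\otimes U_R}\bigl[(1+\sigma^2(\beta))^{L_N}\bigr],
\]
with \(L_N\) the collision local time of two independent walks. This is a chaos/renewal sum over collision times \(n_1<\dots<n_k\le N\). The first collision carries the factor \(\log(1+N/(1+R^2))\), coming from the hitting probability of two walks started at distance \(R\). The remaining renewal sum of \(\sigma^2 \P(S_{2n}=0)\) over times up to \(N\) is comparable to \(\cV(c\,\e^{\th(N,\beta)})\), by the Dickman-type renewal asymptotics of \cite{CSZ19-Dickman} together with the definition \eqref{def:theta-N-beta}. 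Constants in the argument of \(\cV\) are harmless after adjusting \(c'\). So it suffices to prove a bound of the form \(\EE[Z^p]\le C\,M^{-c p(1-p)}\), where \(M\) is the explicit quantity.

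For the fractional moment we use a change of measure. For an event or density \(g=g(\w)\ge 0\), Hölder gives
\[
\EE[Z^p]\le \EE[g^{-p/(1-p)}]^{1-p}\,\tEE_g[Z]^p ,
\]
where \(\tEE_g\) is the tilted measure. We choose the tilt to lower the energy along the typical polymer region. Concretely, \(g=\exp(-\delta X-\delta^2)\), where
\[
X=\frac{1}{\sqrt{\mathrm{Var}}}\sum_{(n,x)} q(n,x)\,\w(n,x)
\]
is a linear statistic weighted by the averaged heat kernel from \(U_R\) (a second-chaos statistic would also do). Then \(\EE[g^{-p/(1-p)}]\) stays bounded. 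Under the tilt, \(Z(U_R)\) has its mean reduced by a factor roughly \(\exp(-\delta\,\Cov(X,\text{polymer energy}))\). By the size-bias (Girsanov) identity, \(\tEE[Z]=\E_{U_R}[\exp(-\delta\beta\sum_n q(n,S_n)/\sqrt{\cdots})]\). This gives a small bound for a single scale, but only a suboptimal one, since the overlap of \(q\) with a typical path controls only \(\log\) of a small power of \(M\).

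To upgrade this, we use a nonhomogeneous coarse-graining. We split \([0,N]\) into dyadic-type time blocks \([t_{k-1},t_k)\) with \(t_k\approx (1+R^2)\,\e^{k}\) (or scales adapted to \(\th\)). Each block contributes an independent, order-one share of the second moment: the logarithmic factor splits as a sum over scales, and the \(\cV\) part splits similarly over \(\log N\)-scale blocks. Using \((\sum a_i)^p\le\sum a_i^p\), we expand \(Z\) as a sum over coarse-grained block trajectories. On each block we apply the one-step change-of-measure estimate, which gives a factor \(1-\eta\) for a fixed \(\eta\) depending on \(p\). Multiplying over the \(K\asymp \log M\) blocks yields \(M^{-c p(1-p)}\). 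The entropy of the coarse-grained paths must be compensated by the per-block gain, which is the usual fractional-moment block argument of \cite{Lac10a,BL17}.

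The main obstacle is this bootstrap step. We need the per-block gain to be uniform in \(R\), \(N\) and \(\beta\), including in the subcritical regime where each block gains only marginally. We also need the \(p\)-dependence \(p(1-p)\) to come out, which forces us to take \(\delta\) proportional to \((1-p)\) and to optimize carefully.
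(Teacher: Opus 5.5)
There is a genuine gap. Your overall architecture matches the paper: a uniform but weak change-of-measure bound, then coarse-graining via $(\sum a_i)^{1/2}\le\sum a_i^{1/2}$, with about $\log$(second moment) blocks. The gap is in the single-block estimate and in the choice of blocks.

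\emph{A linear (or any fixed-chaos) tilt cannot give a vanishing, or even small, bound.} Take Gaussian disorder, $q(n,x)=q^{(U_R)}_n(x)$, $X=\|q\|_2^{-1}\sum_{n,x}q(n,x)\w(n,x)$ and $g=\e^{-\delta X-\delta^2/2}$. Then exactly
$\EE[gZ]=\E_{U_R}\bigl[\exp\bigl(-\delta\beta\|q\|_2^{-1}\sum_{n}q(n,S_n)\bigr)\bigr]\ge \e^{-\delta\beta\|q\|_2}$ by Jensen. Moreover $\beta^2\|q\|_2^2\asymp\sigma^2(\beta)\log(1+\frac{N}{1+R^2})$, which is $O(1)$ whenever $\th(N,\beta)\le O(1)$. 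So the best your H\"older bound can certify, after optimizing in $\delta$, is $\EE[Z^p]\le \e^{-c\,p(1-p)}$ for a fixed $c$. On a block $[a,\e a]$ the gain is only $O(\beta)$. A second-chaos statistic hits the same obstruction: in this regime every fixed chaos contributes $O(1)$ to $\Var[Z_N^{\beta,\w}(f)]$, and the divergence comes from chaoses of unbounded order.

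This is why the paper's proxy is built from \emph{full} strip partition functions, $X(f)=\sum_{i\le k}(Z^{\beta,\w}_{N_{i-1},N_i}(f)-1)$, which satisfy $\tEE_f[X(f)]=\Var[X(f)]$. The scales $N_i=N^{1-\alpha_0 2^{-i}}$ are tuned so that each strip has order-one variance (\Cref{lem:var-strip}). The size-biased variance is shown to be $O(k)$ via hypercontractivity for the diagonal terms and the decorrelation factor $N_i/N_{j-1}$ for the off-diagonal terms. Fed into \Cref{lem:change-measure}, this gives the uniform bound of \Cref{prop:key}. Your sketch has nothing playing this role.

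\emph{The blocks are wrong, and a factor $1-\eta$ per block is not enough.} On $[t_{k-1},t_k]$ with $t_k=(1+R^2)\e^k$, the strip variance is $\asymp(1+|\th(N,\beta)|+\log(N/t_k))^{-1}$ for $\th\le0$. The factor $\cV$ in the second moment comes from the chaos running up to time $N$ after the first collision, which a block-restricted quantity does not see. So these blocks do not carry order-one shares. There are $\log\frac{N}{1+R^2}$ of them, not $\asymp\log M$, and a factor $1-\eta$ on each would give $(\frac{N}{1+R^2})^{-c}$, contradicting the polylogarithmic behaviour \eqref{eq:sharp}.

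Moreover, the skeleton sum $\sup_x\sum_y\mathcal Q_i(x,y)$ ranges over many neighbouring boxes. Each block's half-moment must therefore be below an absolute threshold (the $\frac1{300}$ of \Cref{prop:coarse}), uniformly over initial laws supported in the previous box; a factor $1-\eta$ cannot pay this entropy. Meeting the threshold forces blocks with large effective second moment, namely the doubly-exponential scales $M_i=N^{1-\alpha_0\rho^{-i}}$ with $\rho$ large. This gives $\ell=\log_\rho\bigl(\log\frac{N}{1+R^2}/(1+|\th(N,\beta)|)\bigr)$ blocks, together with a per-block bound tending to $0$, which is exactly \Cref{prop:key}.

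Finally, you do not treat $\th(N,\beta)>0$, where the target is of order $\e^{-c\,\e^{\th}}$. The paper reduces to $\beta_c$ with $\th(N,\beta_c)=0$ by monotonicity in $\beta$, and uses \cite[Thm.~2.2]{BCT25} on the last block.

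Two minor points. The $p$-dependence is not an obstacle: \Cref{lem:frac-moment} reduces everything to $p=\frac12$. And \cite{CSZ19-Dickman} only covers the critical window, so second-moment bounds that are uniform in all regimes need an additional argument.
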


Let us stress here that our result is not limited to the critical regime where \(\th(N,\beta) \to \th\in \R\), and we in fact allow for ranges of parameters with \(\th(N,\beta) \to \pm \infty\).
Again, one important feature of \Cref{thm:balls-polymers} is that the result is uniform in the parameters \(N\in \N\), \(\beta\in (0,1)\) and \(R\leq \sqrt{N}\).
It shows that the fractional moment vanishes as soon as the second moment diverges, \textit{i.e.}\ as soon as \(\log (\frac{N}{1+R^2}) \mathcal{V}(\e^{\th(N,\beta)})\) diverges (in the case \(R\leq \sqrt{N}\)).

Let us finally mention that going from~\eqref{eq:balls-polymers} to~\eqref{eq:balls-polymers-explicit} (or vice-versa) is a matter of estimating (sharply) the second moment.
Once again, the difficulty is to provide estimates that hold \textit{uniformly} in the parameters \(N\in \N\), \(\beta\in (0,1)\), \textit{i.e.}\ are not limited to the critical regime.
This does not seem to appear in the literature, so we provide the following result here.

\begin{proposition}[Variance of polymer partition functions]
  \label{prop:var-polymer}
  Let \(\th(N,\beta)\) be defined as in \eqref{def:theta-N-beta}.
  Then there are constants \(c_1,c_2,c_3,c_4\), such that for any \(\beta\in (0,1)\), any \(N\in \N\) and any \(R\geq 0\), we have
  \[
    c_1 \log\Bigl( 1+ \frac{N}{1+R^2}\Bigr) \, \cV\bigl( c_2\, \e^{\th(N,\beta)} \bigr)
    \leq \Var\bigl[ Z_{N}^{\beta,\w}(U_R) \bigr]
    \leq c_3 \log\Bigl( 1+ \frac{N}{1+R^2}\Bigr) \, \cV\bigl( c_4\, \e^{\th(N,\beta)} \bigr) \,,
  \]
  Note that \(\e^{\th(N,\beta)} = (1+o(1)) \e^{\alpha}\,  N \e^{ -\frac{\pi}{\sigma^2(\beta)}}\) as \(N\to\infty\) (uniformly in \(\beta\in(0,1)\)), recalling~\eqref{RN-asym}.
\end{proposition}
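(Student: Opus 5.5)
We prove \Cref{prop:var-polymer} through the chaos expansion of the partition function, which writes the variance as a renewal-type sum. Set \(\xi_{n,x}\coloneqq \e^{\beta\w(n,x)-\lambda(\beta)}-1\); these are i.i.d., centered, with variance \(\sigma^2\coloneqq\sigma^2(\beta)\). Expanding the product in \eqref{eq:polymeas} gives
\[
  \Var\bigl[Z_N^{\beta,\w}(U_R)\bigr]
  = \sum_{k\ge1}\sigma^{2k}\!\!\sum_{\substack{1\le n_1<\dots<n_k\le N\\ z_1,\dots,z_k}}
    q^{U_R}_{n_1}(z_1)^2\prod_{i=2}^{k} q_{n_i-n_{i-1}}(z_i-z_{i-1})^2,
\]
where \(q_n(z)=\P_0(S_n=z)\) and \(q^{U_R}_{n}(z)=\sum_x U_R(x)q_n(z-x)\). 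Summing out \(z_2,\dots,z_k\) turns each later factor into \(u(m)\coloneqq\sum_z q_m(z)^2=\P(S_{2m}=0)\sim \frac{1}{\pi m}\). The first factor becomes \(u^{R}(n)\coloneqq\sum_z q^{U_R}_n(z)^2\). By the local CLT this is comparable to \(\frac{1}{n+R^2}\), uniformly in \(n\ge1\) and \(R\ge0\); for \(n\lesssim R^2\) it is essentially \(\abs{B(0,R)\cap\Z^2_{\mathrm{even}}}^{-1}\asymp R^{-2}\). Hence
\[
  \Var \asymp \sum_{n=1}^{N} \frac{\sigma^2}{n+R^2}\, \bigl(1+ \mathcal{G}_{N-n}\bigr),
  \qquad
  \mathcal{G}_M\coloneqq\sum_{k\ge1}\sigma^{2k}\!\!\sum_{0<t_1<\dots<t_k\le M}\prod_i u(t_i-t_{i-1}),
\]
where \(\mathcal{G}_M\) is the renewal function for a renewal with inter-arrival law \(\sigma^2 u(\cdot)\) restricted to \([1,M]\). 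This reduces the proposition to two-sided bounds on \(\mathcal{G}_M\), uniform in \(\beta\in(0,1)\) and \(M\le N\).

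The key step is to show that \(1+\mathcal{G}_M\) is comparable to \(\sigma^{-2}\,\frac{\de}{\de M}\)-type quantities. Concretely, we aim for
\[
  1+\mathcal G_M \asymp 1+ \frac{1}{\sigma^2 \,\log(\ldots)}\cdots,
\]
and ultimately show that
\[
  \sum_{n\le N}\frac{\sigma^2(1+\mathcal{G}_{N-n})}{n+R^2}\asymp \log\Bigl(1+\tfrac{N}{1+R^2}\Bigr)\,\cV\bigl(c\,\e^{\th(N,\beta)}\bigr).
\]
We use the Dickman-subordinator approach of \cite{CSZ19-Dickman}, but in a non-asymptotic form. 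Define \(\lambda_\beta\coloneqq\pi/\sigma^2\), so that \(\th(N,\beta)=\pi R_N-\lambda_\beta\). The \(k\)-fold convolution \(\sum_{t_1<\dots<t_k\le M}\prod u(t_i-t_{i-1})\) is bounded above and below by \((c_\pm/\pi)^k\) times
\[
  \int_{0<s_1<\dots<s_k<\log M}\dd s,
\]
up to controlled factors. To get this, use the substitution \(t_i=\e^{s_i}\) and the bound \(u(m)\asymp \frac{1}{1+m}\), together with the exact asymptotic \(\pi R_M=\log M+\alpha+o(1)\) from \eqref{RN-asym}. This yields a series of the form \(\sum_k \frac{(\sigma^2 \pi^{-1}(\log M+O(1)))^k}{\cdots}\), and that crude form would lose the Volterra structure. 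So we instead follow the standard route: decompose each renewal path according to its last "big jump". Write
\[
  \sigma^2 \sum_{t\le M}u(t)=\frac{\pi R_M}{\lambda_\beta}=1+\frac{\th(M,\beta)}{\lambda_\beta},
\]
and then rescale time logarithmically, \(t=M^{r}\). In this scale the renewal with inter-arrival law \(\sigma^2 u\) becomes, on the event of \(k\) jumps, comparable to the Dickman subordinator killed at rate \(\lambda_\beta\). This gives
\[
  \mathcal{G}_M\asymp \lambda_\beta\int_0^{\infty}\frac{(c\,M\e^{-\lambda_\beta})^{s}}{\Gamma(s+1)}\dd s
  = \lambda_\beta\,\cV\bigl(c\, M\e^{-\lambda_\beta}\bigr),
\]
valid when \(M\e^{-\lambda_\beta}\) is not tiny. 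When \(M\e^{-\lambda_\beta}\) is tiny, \(\mathcal{G}_M\) is instead \(O(\log M/\lambda_\beta)\), which is small when \(\lambda_\beta\gg\log M\). The two regimes must be glued together using \(\cV(T)\sim 1/\log(1/T)\). The identity \(\e^{\th(M,\beta)}\asymp M\e^{-\lambda_\beta}\) then translates everything into \(\th\).

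For the upper bound we use \(u(m)\le C/m\) and compare the discrete sum with the integral term by term. The constants then enter only through \(c_4\) and the base of the exponential, which is why the statement has \(\cV(c\,\e^{\th})\) rather than exact constants. For the lower bound we restrict to renewal points with \(t_i-t_{i-1}\ge t_{i-1}\), or to dyadic-separated configurations. This costs a factor \(c^k\), which is again absorbed into \(c_2\).

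Finally, we plug \(\mathcal{G}_{N-n}\) into the outer sum. Restricting to \(n\le N/2\) and using that \(M\mapsto\mathcal{G}_M\) is monotone gives the lower bound \(\asymp \sigma^2\log(1+\frac{N}{1+R^2})(1+\mathcal{G}_{N/2})\). For the upper bound, the range \(n>N/2\) contributes at most \(\frac{\sigma^2}{N}\sum_{m\le N/2}\mathcal{G}_m\), which is of no larger order. We then check that \(\sigma^2(1+\lambda_\beta\cV(c\e^{\th}))\asymp \cV(c'\e^{\th})\). Since \(\sigma^2\lambda_\beta=\pi\), this reduces to \(\sigma^2\lesssim\cV(c\e^{\th})\). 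That inequality holds because \(\cV(T)\ge c/\log(1/T)\) and \(\log(1/\e^{\th})\le\lambda_\beta\) whenever \(\th\le0\); the case \(\th>0\) is trivial. The main obstacle is making the Dickman/Volterra comparison for \(\mathcal{G}_M\) \emph{uniform}, covering simultaneously \(\th\to\pm\infty\) and \(\beta\) not small. We will first try to handle it by proving matching upper and lower bounds on the \(k\)-fold convolution, of the form \((C^{\pm})^k\) times the simplex integral in log-time, uniformly in \(k\).
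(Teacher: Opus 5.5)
Your outer reduction is the paper's. The chaos expansion is exactly the first-collision decomposition \eqref{eq:first-collision}, with $u^R(n)=q_n(U_R,U_R)\asymp 1/(n+R^2)$ and $\sigma^2(\beta)(1+\mathcal G_M)=\cV_M^\beta$. Monotonicity in $M$ and the sum of $1/(n+R^2)$ then give the factor $\log(1+\frac{N}{1+R^2})$, and your final gluing via $\sigma^2(\beta)\lambda_\beta=\pi$ is also fine.

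Everything therefore rests on the uniform bound $c_1\cV(c_2\e^{\th(M,\beta)})\le\cV_M^\beta\le c_3\cV(c_4\e^{\th(M,\beta)})$, which is \Cref{lem:var-point-to-plane}. Here there is a genuine gap: the tools you name cannot produce it. The renewal has per-step mass $\sigma^2(\beta)R_M=1+\th(M,\beta)/\lambda_\beta$, and the Volterra behaviour comes from this deviation $\th/\lambda_\beta$ from $1$, accumulated over a number of steps of order $\lambda_\beta$ or more.

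Any estimate that loses a factor $C^{\pm k}$, $C\neq 1$, in the $k$-th term is equivalent to replacing $\sigma^2(\beta)$ by $C\sigma^2(\beta)$, i.e.\ $\lambda_\beta$ by $\lambda_\beta/C$. This covers both bounding $u(m)\le C/m$ with $C>1/\pi$ `term by term' and restricting to dyadically separated configurations `at a cost $c^k$'. The effect is to shift $\th$ by $(1-1/C)\lambda_\beta$. In the critical window $\lambda_\beta\approx\log M$, so you would get roughly $\exp(\e^{\th}M^{c})$ as an upper bound and something of order $1/\log M$ as a lower bound, while the truth is of order one. Such losses cannot be absorbed into $c_2,c_4$.

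The comparison you plan to prove is also false. For fixed $k$, $\sum_{\ell_1+\dots+\ell_k\le M}\prod_i u(\ell_i)=(1+o(1))(\log M/\pi)^k$, which exceeds $C^k(\log M)^k/k!$ by a factor of order $k!/C^k$. The Dickman/Volterra formula for $\mathcal G_M$ is then only asserted. Your side claim $\mathcal G_M=O(\log M/\lambda_\beta)$ when $M\e^{-\lambda_\beta}$ is small is wrong when $1\ll\abs{\th}\ll\lambda_\beta$, since there $\mathcal G_M\asymp\lambda_\beta/\abs{\th}$ is large.

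What is needed is an argument that keeps the per-step weight exact, so that constants only enter a prefactor. The paper does this by an exponential tilt. For every $\lambda>0$ it uses the exact identity \eqref{eq:soft-tilt-identity-0}, with $\Rhat(\lambda)=\sum_n\e^{-\lambda n}q_{2n}(0)$, and defines $\rho$ by $\sigma^2(\beta)\Rhat(\rho/N)=1$. The sharp asymptotic $\pi\Rhat(\lambda)=-\log\lambda+\alpha-\gamma+o(1)$ then gives $\rho=\e^{\th(N,\beta)-\gamma+o(1)}$.

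For the upper bound, summing the geometric series gives $\e^{\lambda N}/(\Rhat(\rho/N)-\Rhat(\lambda))$, with $\lambda=(1+\varepsilon)\rho/N$ in the supercritical case or $\lambda=1/(N\log(1/\rho))$ in the subcritical case. The only loss is then an additive $O(1)$ in the gap $\Rhat(\rho/N)-\Rhat(\lambda)$. For the lower bound, $k$ is restricted to a suitable window and $\tau_k$ is controlled by Chebyshev or Markov, using the moments of $T_1$ under $\mathrm P_{\rho/N}$ or $\mathrm P_{1/N}$.

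The critical window is taken from \cite{CSZ19-Dickman}. The range $\beta\in(\beta_0,1)$, where no small-$\beta$ asymptotics is available, is treated separately via the Lyapunov exponent $\mathtt F_2(\beta)$ and \cite{BN26}. Your sketch identifies this non-small-$\beta$ regime as an obstacle but does not resolve it.
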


We stress that the result is valid for all \(R\geq 0\), but we will only use it for \(R\leq \sqrt{N}\); in the case \(R>\sqrt{N}\), we may replace \(\log( 1+ \frac{N}{1+R^2})\) by \(\frac{N}{1+R^2}\).

\subsection{Towards a multifractal formalism for SHF}
\label{sec:multifractal}

It is expected that the SHF is a multifractal measure, see for example the simulations in \cite{CSZ25-icm, CSZ24} and the very recent~\cite{GN26} where authors show some logarithmic intermittency of the SHF.
Estimates on moments like the ones we obtain in this article are a step towards such framework.
Let us informally motivate why.

The notion of \(L^p\) (multi-fractal) spectrum for a measure $\mu$ was introduced by Parisi-Frisch \cite{PF85}, see also \cite{Ols95}.
This notion is captured by an exponent \(\tau_\mu(p)\) which (informally) describes the asymptotic \(\sup \sum_i \mu(B_r(x_i))^p \approx r^{\tau_\mu(p)}\) as \(r\downarrow 0\), where the supremum is over all countable families of disjoint balls of radius \(r\) and centres \(x_i\).
Overlooking the supremum, the expected value of such a sum for the SHF is \(\EE \bigl[\sum_i \mathscr{Z}_{t}^{\th} (B_r(x_i))^p \bigr] =  \sum_i  \EE \bigl[ \mathscr{Z}_{t}^{\th}(B_r(x_i))^p \bigr] \approx  r^{2p-2} \bigl(\log \frac{1}{r}\bigr)^{\frac{1}{2}p(p-1) +o(1)}\), where the term \(r^{2p}\) is just the \(p\)-th power of the Lebesgue volume of \(B_r(x)\) and \(r^{-2}\) is the number of such balls that fit inside a unit square, while the logarithmic correction to the Euclidean counting comes from the results of this work with an exponent \(cp(p-1)\), while \eqref{eq:sharp} would imply \(c=1/2\).

\subsection{Overview of the rest of the article}

Let us now give a brief outline of how the rest of the paper is organized:
\begin{itemize}
  \item In \Cref{sec:strategy}, we give some details on the overall strategy of the proof: the two main steps consist of a (key) sub-optimal bound on the fractional moment (\Cref{prop:key}) followed by a bootstrapping of a non-homogeneous coarse-graining procedure (\Cref{prop:coarse}).
        At the end of \Cref{sec:key} we also derive the proofs of \Cref{thm:balls-polymers,thm:balls-SHF} as a direct consequence of the key estimates therein.

  \item In \Cref{sec:keyprop}, we prove \Cref{prop:key} via a streamlined change of measure argument.
        The general idea is presented in \Cref{sec:change-measure,sec:scales-moment} and the technical estimates are proved in the subsequent sections.
  \item In \Cref{sec:coarse}, we prove \Cref{prop:coarse}.
        The proof follows the lines of the well-established coarse-graining procedure, which is
         adapted here to allow for non-homogeneous scales in the coarse-graining procedure.
  \item Finally, we collect all key second moment estimates in \Cref{sec:second-moment}.
        In particular, we prove \Cref{prop:var-SHF,prop:var-polymer} and some further technical estimates needed in \Cref{sec:keyprop}.
\end{itemize}

\section{Main steps of the proof: bootstrapping and coarse-graining}
\label{sec:strategy}

\subsection{Key estimates}
\label{sec:key}
The core of the paper consists in proving \Cref{thm:balls-polymers}.
First of all, let us explain how we may reduce to the following statement, which estimates the half moment \(p=\frac12\).
Let us define here 
\begin{equation}
  \label{def:M1-disc}
  \mathcal{M}_1^{\mathrm{disc}}(r) \coloneqq \Bigl\{ f: \Z^2 \to [0,1] \;;\; \sum_{z\in \Z^2} f(z) =1 \text{ and } f(z) =0 \text{ for } |z|_{\infty}>r\Bigr\} \,.
\end{equation}
\begin{theorem}
  \label{thm:half-moment}
  Let \(\th(N,\beta)\) be defined as in \eqref{def:theta-N-beta}.
  There exist constants \(C,c>0\) such that, for every \(\beta\in(0,1)\), \(N\in\N\), and \(0\le R\le\sqrt N\) we have
  \[
    \sup_{f\in\cM_1^{\rm disc}(R)} \EE\bigl[Z_N^{\beta,\w}(f)^{1/2}\bigr]
    \leq C \biggl( 1+ \log\Bigl(1+\frac{N}{1+R^2}\Bigr) \cV\bigl(\e^{\th(N,\beta)}\bigr) \biggr)^{-c}.
  \]
  Note that, together with \Cref{prop:var-polymer} and~\eqref{eq:asymp-V}, this gives, for some \(c'>0\)
  \[
    \sup_{f\in\cM_1^{\rm disc}(R)} \EE\bigl[Z_N^{\beta,\w}(f)^{1/2}\bigr]
    \leq C\, \EE\bigl[Z_N^{\beta,\w}(U_R)^2\bigr]^{-c}.
  \]
\end{theorem}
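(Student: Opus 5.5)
The plan follows the two-step strategy announced in the overview: first a sub-optimal \emph{key} bound on the half moment, obtained by a change of measure, and then a bootstrap of that bound through a nonhomogeneous coarse-graining. Write \(\Lambda \coloneqq 1+\log(1+\frac{N}{1+R^2})\,\cV(\e^{\th(N,\beta)})\). By \Cref{prop:var-polymer}, \(\Lambda\) is comparable to the second moment of \(Z_N^{\beta,\w}(U_R)\), so it suffices to prove \(\EE[Z_N^{\beta,\w}(f)^{1/2}]\le C\Lambda^{-c}\) uniformly in \(f\in\cM_1^{\rm disc}(R)\). When \(\Lambda\) is bounded, Jensen gives \(\EE[Z^{1/2}]\le 1\), so only \(\Lambda\to\infty\) matters. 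This can happen for two reasons, which I would treat by the same mechanism: \(\th(N,\beta)\) large (supercritical), or \(\log\frac{N}{1+R^2}\) large with \(\th\) at most critical.

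\textbf{Step 1 (key, sub-optimal estimate).} I would prove that there is a fixed \(\eta<1\) such that \(\sup_f\EE[Z_N(f)^{1/2}]\le\eta\) as soon as \(\Lambda\ge K\), for a large constant \(K\). The tool is the change of measure \(\EE[Z^{1/2}]\le \EE[g(\w)^{-1}]^{1/2}\,\EE[g(\w)Z]^{1/2}\) with a weight \(g=\exp(-\delta\,\indic_{\{X>\kappa\}})\) or \(g=\e^{-\delta X}\). Here \(X\) is a quadratic chaos
\[
X=\sum_{(n,x),(m,y)} q(n,x;m,y)\,\w(n,x)\,\w(m,y),
\]
supported on the relevant space-time window, with kernel weights chosen log-uniformly across the scales \(2^k\) between \(1+R^2\) and \(N\) (nonuniform in the supercritical case). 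The normalisation is chosen so that \(\Var(X)=1\). The cost \(\EE[g^{-1}]\) is then bounded by a constant. Under the size-biased measure \(\w\mapsto Z_N(x)\,\dd\PP\), the disorder acquires a drift \(\beta\) along the path \(S\), and \(X\) acquires a mean of order \(\beta^2\sum q(\cdot)\) along the path. Using the second-moment estimates (the replica-overlap and local-time computations behind \Cref{prop:var-polymer}), this mean should be of order \(\sqrt{\Lambda}\) up to constants, hence larger than \(\kappa\) with high probability. So \(\EE[gZ]\le \e^{-\delta}+o(1)\), and choosing parameters gives \(\eta<1\). The restriction \(f\in\cM_1^{\rm disc}(R)\) enters only through the lower cutoff \(1+R^2\) of the scales.

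\textbf{Step 2 (bootstrapping via nonhomogeneous coarse-graining).} I would split the time horizon \([0,N]\) into \(m\asymp \log\Lambda\), or rather \(\asymp\Lambda^{c_0}\), consecutive blocks of geometrically increasing lengths. Each block is chosen so that the second-moment quantity \(\Lambda\) restricted to that block is at least \(K\). In the subcritical and critical regime \(\Lambda\) grows like \(\log\frac{N}{R^2}\), so blocks \([T_{i-1},T_i]\) with \(\log(T_i/T_{i-1})\) of constant size work. In the supercritical regime one uses time blocks of length \(\approx N\e^{-\th}\). Spatially, each block is covered by boxes of side \(\sqrt{T_i}\). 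I would then use \((\sum a_j)^{1/2}\le\sum a_j^{1/2}\) to decompose \(Z_N(f)\) along the sequences of boxes visited by the walk. The Markov property and the independence of disorder across blocks make the expectation factorise. Each visited block then contributes a factor \(\eta\), obtained from Step 1 applied with a starting distribution in \(\cM_1^{\rm disc}\) of the box size, which is exactly why the supremum over \(f\) is built into the statement. The combinatorial entropy from the choice of boxes must be kept below \(\eta^{-1}\) per block. For that I would use the Gaussian decay of the transition kernel, together with a refinement trick that makes \(\eta\) small enough, for instance Step 1 with \(K\) large giving \(\eta\le\varepsilon\). This yields \(\EE[Z^{1/2}]\le (C\varepsilon)^{m}\), and hence \(\Lambda^{-c}\).

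\textbf{Main obstacle.} I expect Step 1 to be the hardest part. The difficulty is choosing the kernel \(q\) so that three things hold simultaneously and uniformly in \(\beta,N,R\): \(\Var(X)\) is bounded, the size-biased mean of \(X\) is large, and its size-biased variance is controlled. The last of these needs fourth-moment-type bounds on the chaos under the tilted measure, including contributions from the random starting point. The entropy control in Step 2 for the geometric block lengths is the second delicate point.
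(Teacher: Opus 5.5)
\textbf{Step 1 has a genuine gap.} You claim that the size-biased mean of a unit-variance quadratic chaos \(X=\sum q(n,x;m,y)\,\w(n,x)\w(m,y)\) is of order \(\sqrt{\Lambda}\). This is false in exactly the regime that matters. Write \(Z=Z_N^{\beta,\w}(f)\). Only the second-chaos component \(\Pi_2 Z\) of \(Z\) correlates with such an \(X\), so Cauchy--Schwarz gives \(\tEE_f[X]=\EE[ZX]\lesssim\|\Pi_2 Z\|_{L^2}\). Moreover \(\|\Pi_2 Z\|_{L^2}^2=\sigma^2(\beta)^2\sum_{n\le N}q_n(f,f)R_{N-n}\lesssim\sigma^2(\beta)\log(1+\frac{N}{1+R^2})\). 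This is \(O(1)\) whenever \(\th(N,\beta)\) is bounded above, since then \(\pi/\sigma^2(\beta)\ge\pi R_N-O(1)\approx\log N\).

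For instance, with \(R=0\) and \(\th(N,\beta)=0\) you have \(\Lambda\asymp\log N\), yet the shift stays bounded. Near criticality the second moment is carried by chaoses of order \(\asymp\log N\), which no fixed-degree chaos sees. With a bounded shift and controlled size-biased fluctuations, \(\tPP_f(X>\kappa)\) is small for large \(\kappa\). So your mechanism, or \Cref{lem:change-measure}, gives at best a contraction bounded away from \(0\). It cannot give the small \(\eta\le\varepsilon\) that Step 2 needs to beat the spatial entropy, and taking \(K\) large does not help.

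The statement you want in Step 1 is true (it is essentially \Cref{prop:key}), but the missing idea is the paper's proxy \(X(f)=\sum_{i=1}^k\bigl(Z^{\beta,\w}_{N_{i-1},N_i}(f)-1\bigr)\). It is built from block partition functions containing \emph{all} chaos orders, so that \(\tEE_f[X]-\EE[X]=\Var[X]\) exactly, see \eqref{eq:tEE=Var}. With the doubly-exponential scales \(N_i=N^{1-\alpha_0 2^{-i}}\):
\begin{itemize}
\item each \(\Var[X_i]\asymp1\) (\Cref{lem:var-Xj});
\item the diagonal size-biased variances are bounded by hypercontractivity;
\item \(\tC_f[X_i,X_j]\lesssim N_i/N_{j-1}\), from the chaos expansion.
\end{itemize}
\Cref{lem:change-measure} then gives \(C/\sqrt{1+k}\) with \(k\asymp\log\bigl(\log(\frac{N}{1+R^2})/(1+|\th|)\bigr)\). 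This is what becomes \(\le 1/300\) on each coarse-graining block once the ratio \(\rho\) is large.

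\textbf{Your explicit block choice in Step 2 also fails.} By \eqref{eq:th-asym}, \(\th(T,\beta)=\th(N,\beta)+\log(T/N)+o(1)\). Hence, by \Cref{lem:var-strip}, a block \([T_{i-1},T_i]\) started from a box of side \(\sqrt{T_{i-1}}\) has variance \(\asymp\log(T_i/T_{i-1})/(1+|\th(N,\beta)|+\log(N/T_i))\) (for \(\th(N,\beta)\le0\)). This tends to \(0\) for blocks with \(\log(T_i/T_{i-1})\) of constant size and \(T_i\ll N\). On such blocks the half moment is close to \(1\) and no contraction is available.

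Your own criterion (block second moment \(\ge K\)) forces \(\log(N/T_{i-1})\ge(K+1)\log(N/T_i)+K(1+|\th|)\). These are the paper's scales \(M_i=N^{1-\alpha_0\rho^{-i}}\), and only \(m\asymp\log_\rho\bigl(\log(\frac{N}{1+R^2})/(1+|\th|)\bigr)\asymp\log\Lambda\) blocks fit. Taking \(m\asymp\Lambda^{c_0}\) would give decay faster than any power of the second moment, which is incompatible with \eqref{eq:sharp} after passing to the SHF limit.

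In the supercritical case, homogeneous blocks of length \(\approx N\e^{-\th}\) alone only yield \(\e^{-c\,\e^{\th}}\). They lose the factor \(\log(\frac{N}{1+R^2})^{-c}\) in \(\Lambda^{-c}\). The paper obtains both factors as follows:
\begin{itemize}
\item it uses monotonicity in \(\beta\) to reduce the blocks to \(\beta_c\), where \(\th(N,\beta_c)=0\);
\item it runs the inhomogeneous scheme up to \(M_\ell=N/\e\);
\item it bounds the remaining stretch by the \(\exp(-c\,\e^{\th(N,\beta)})\) estimate of \cite[Thm.~2.2]{BCT25}, together with \Cref{lem:monotone}.
\end{itemize}
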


We can then turn estimates on the half-moment \(\EE[Z_N^{\beta,\w}(f)^{1/2}]\) into estimates for all \(p\in(0,1)\), thanks to the following lemma.

\begin{lemma}
  \label{lem:frac-moment}
  Let \(Z\geq0\) satisfy \(\EE[Z]=1\).
  Then, for every \(p\in(0,1)\),
  \begin{equation}
    \label{eq:half-moment-interpolation}
    \EE[Z^p]
    \leq \bigl(\EE[Z^{1/2}]^{2} \bigr)^{p\wedge (1-p)}
    \leq \bigl(\EE[Z^{1/2}]^{2} \bigr)^{p (1-p)}.
  \end{equation}
\end{lemma}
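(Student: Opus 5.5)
The plan is to prove the first inequality in \eqref{eq:half-moment-interpolation} by log-convexity of the moment function \(q\mapsto \log\EE[Z^q]\), which is Hölder's inequality. The second inequality is elementary. Write \(m\coloneqq\EE[Z^{1/2}]\). By Jensen, \(m\le \EE[Z]^{1/2}=1\), so \(m^2\in[0,1]\). Since \(p\wedge(1-p)\ge p(1-p)\) for \(p\in(0,1)\) (because \(\min(p,1-p)\ge \min(p,1-p)\max(p,1-p)\)), raising a number in \([0,1]\) to a larger exponent makes it smaller, and we get \((m^2)^{p\wedge(1-p)}\le (m^2)^{p(1-p)}\).

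For the first inequality we split into two cases.

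\textbf{Case \(p\in(0,\tfrac12]\).} Here \(p\wedge(1-p)=p\), and we need \(\EE[Z^p]\le m^{2p}\). Write \(p=\tfrac12\cdot 2p\) with \(2p\in(0,1]\), and apply Jensen to the concave map \(x\mapsto x^{2p}\) applied to \(Z^{1/2}\):
\[
\EE[Z^p]=\EE\bigl[(Z^{1/2})^{2p}\bigr]\le \EE[Z^{1/2}]^{2p}=m^{2p}.
\]

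\textbf{Case \(p\in[\tfrac12,1)\).} Here \(p\wedge(1-p)=1-p\), and we need \(\EE[Z^p]\le m^{2(1-p)}\). Interpolate between the exponents \(\tfrac12\) and \(1\): write \(p=\theta\cdot\tfrac12+(1-\theta)\cdot 1\) with \(\theta=2(1-p)\in(0,1]\). Hölder's inequality with exponents \(1/\theta\) and \(1/(1-\theta)\) gives
\[
\EE[Z^p]=\EE\bigl[Z^{\theta/2}Z^{1-\theta}\bigr]\le \EE[Z^{1/2}]^{\theta}\,\EE[Z]^{1-\theta}=m^{2(1-p)},
\]
using \(\EE[Z]=1\). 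At \(\theta=1\), i.e.\ \(p=\tfrac12\), this is trivially an equality. At \(p=1/2\) both cases agree.

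There is no real obstacle here. The only points needing care are the use of \(\EE[Z]=1\) to discard the second Hölder factor, and the fact that \(m\le1\), which justifies the comparison of exponents in the second inequality.
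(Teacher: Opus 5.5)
Your proposal is correct and follows the paper's proof essentially verbatim: Jensen with the concave map \(x\mapsto x^{2p}\) for \(p\le\frac12\), Hölder with the interpolation \(p=2(1-p)\cdot\frac12+(2p-1)\cdot 1\) together with \(\EE[Z]=1\) for \(p\ge\frac12\), and then \(p\wedge(1-p)\ge p(1-p)\) with \(\EE[Z^{1/2}]^2\le 1\) for the second inequality.
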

\begin{proof}
  If \(0<p\leq\frac12\), the map \(x\mapsto x^{2p}\) is concave.
  Therefore, Jensen's inequality gives
  \[
    \EE[Z^p]
    = \EE\bigl[(Z^{1/2})^{2p}\bigr]
    \leq \EE[Z^{1/2}]^{2p}.
  \]
  If \(\frac12\leq p<1\), write \(p=2(1-p)\frac12+(2p-1)\cdot1\).
  By Hölder's inequality,
  \[
    \EE[Z^p]
    \leq \EE[Z^{1/2}]^{2(1-p)} \EE[Z]^{2p-1}
    = \EE[Z^{1/2}]^{2(1-p)}.
  \]
  Combining the two cases proves the first inequality in \eqref{eq:half-moment-interpolation}.
  The second one holds simply because \(p \wedge (1-p) \ge p(1-p)\) and \(\EE[Z^{1/2}]^{2} \le \EE[Z] = 1\).
\end{proof}

\begin{proof}[Proof of \Cref{thm:balls-polymers}]
The bounds in \Cref{thm:balls-polymers}
 follow directly from the combination of \Cref{thm:half-moment,lem:frac-moment}.
\end{proof}

\begin{proof}[Proof of \Cref{thm:balls-SHF}]
The proof follows from Theorem \Cref{thm:balls-polymers} by taking the limit $N\to\infty$ and using
Theorem  \Cref{thm:CSZ}.
\end{proof}

\subsection{Key steps of the proof of \texorpdfstring{\Cref{thm:half-moment}}{}}

\subsubsection*{Step 1: A sub-optimal estimate}

We will first obtain a \textit{sub-optimal} bound, which is our key result.
The estimate is sub-optimal in two aspects: first, it applies only to parameters \(\th(M,\beta)\leq 0\); second, its decay is much weaker than in \Cref{thm:half-moment}.
However, its important feature is that it is \emph{quantitative} and \emph{uniform} in the parameters.
This will be enough in the coarse-graining procedure, since it will give a fixed contraction of the half moment on every coarse-graining block, which can then be iterated.

\begin{proposition}[Key proposition]
  \label{prop:key}
  There is a (universal) constant \(C_0\) such that the following holds: if \(\beta\in(0,1)\), \(M\in\N\), and \(0\leq R\leq\sqrt M\) are such that \(\th(M,\beta)\leq0 \), then
  \[
    \sup_{f\in\cM_1^{\rm disc}(R)} \EE\Bigl[Z_M^{\beta,\w}(f)^{1/2}\Bigr]
    \leq C_0\Biggl(1+\log\Biggl(1+\frac{\log\bigl(1+\frac{M}{1+R^2}\bigr)}{1+\abs{\th(M,\beta)}}\Biggr)\Biggr)^{-1/2}\,.
  \]
\end{proposition}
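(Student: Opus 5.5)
We will prove \Cref{prop:key} by a change of measure. Fix \(f\in\cM_1^{\rm disc}(R)\) and write \(Z=Z_M^{\beta,\w}(f)\). For any event or bounded positive functional \(g(\w)\), Cauchy--Schwarz gives
\[
  \EE\bigl[Z^{1/2}\bigr] \le \EE\bigl[g(\w)^{-1}\bigr]^{1/2}\, \EE\bigl[g(\w) Z\bigr]^{1/2}.
\]
So it suffices to find \(g\) with \(\EE[g^{-1}]=O(1)\) and \(\EE[gZ]\) small. Writing \(\tPP\) for the size-biased measure \(\dd\tPP=Z\,\dd\PP\), the second factor is \(\tEE[g]\). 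Thus \(g\) must be a functional that is typically of order one under \(\PP\) but atypically small under \(\tPP\). Under \(\tPP\), the environment along a random walk path \(S\) sampled from the polymer measure is tilted: \(\w(n,S_n)\) has mean about \(\beta\).

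The natural choice is a quadratic (chaos-of-order-two) statistic, as in \cite{BL17,BCT25}. We set \(g=\exp(-a\,X\indic_{X>0})\), or a truncated variant, where
\[
  X=\sum_{(n,x),(m,y)} V\bigl((n,x),(m,y)\bigr)\,\w(n,x)\w(m,y)
\]
is built from a kernel \(V\) adapted to the heat kernel. The kernel is supported near the starting ball and spread over time scales. To capture the nonhomogeneous scales we use \(K\) dyadic blocks in \(\log\)-time between \(1+R^2\) and \(M\), with \(K\approx \log\bigl(1+\frac{M}{1+R^2}\bigr)/(1+\abs{\th})\). On block \(k\), \(X_k\) sums \(q_{m-n}(y-x)\,\w\w\) over pairs at times in \([\e^{k},\e^{k+1}]\) times \(1+R^2\), normalized so that \(\Var_\PP(X_k)\approx1\). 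We then put \(X=K^{-1/2}\sum_k X_k\), so that \(\Var_\PP(X)=O(1)\). Since the \(X_k\) are nearly orthogonal, this gives \(\EE[g^{-1}]\le \EE[\e^{a|X|}]=O(1)\) by hypercontractivity/Gaussian-type concentration for \(a\) small. The hypothesis \(\th(M,\beta)\le0\) keeps \(\sigma^2(\beta)R_M\le 1\), and this controls these variances.

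The key step is the lower bound on the mean under \(\tPP\). Using the second-moment formulas of \Cref{sec:second-moment}, \(\tEE[X_k]\) is a sum over pairs of \(\beta^2 q\cdot\)(two-point polymer correlations). We aim to show \(\tEE[X_k]\ge c\) for each block, uniformly when \(\th\le0\). Roughly, the expected replica overlap at scale \(\e^k\) is comparable to \(\sigma^2\log\) of the block ratio, and the \(1+\abs{\th}\) normalization of the block length is exactly what keeps this of order one. Summing gives \(\tEE[X]\ge c\sqrt K\). Together with a variance bound \(\tVV(X)=O(1)\), obtained from fourth-moment/replica estimates of \Cref{sec:second-moment}, Chebyshev yields \(\tPP(X\le c\sqrt K/2)=O(1/K)\). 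If instead \(g\) is taken as the indicator-type weight \(\exp(-a X)\wedge 1\), we get \(\tEE[g]\lesssim \e^{-ac\sqrt K}+O(1/K)\).

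This yields only a polynomial-in-\(K\) decay, weaker than the claimed \((1+\log K)^{-1/2}\) requirement—in fact stronger. So the bound follows after the Cauchy--Schwarz step, even with losses. If the variance of \(X\) under \(\tPP\) turns out to be only bounded by \(O(\log K)\), because of correlations between blocks, the weaker decay \((\log K)^{-1}\) in \(\tEE[g]\) still suffices; this is likely why the statement has the logarithm.

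The main obstacle is this \(\tPP\)-variance estimate, uniform over \(f\), \(R\), \(\beta\), and \(\th\le0\). It requires controlling three- and four-replica collision sums with one size-biased replica. I would first attempt it by expanding \(\tEE[X^2]\) via the polynomial chaos of \(Z\), and bounding cross-block terms using the Dickman-type decay of the renewal kernel from \Cref{sec:second-moment}.
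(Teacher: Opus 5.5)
There is a genuine gap, and it sits exactly at your key step: the claim that a quadratic block statistic with $\Var(X_k)\approx 1$ satisfies $\tEE[X_k]\ge c$. This is false when $\th(M,\beta)\le 0$, and no choice of kernel $V$ can repair it. For $n<m$ one has $\EE[Z\,\w(n,x)\w(m,y)]=\lambda'(\beta)^2\,q^{(f)}_n(x)\,q_{m-n}(y-x)$, and this vanishes when $n=m$, $x\neq y$. Combine this with Cauchy--Schwarz, with $\sum_x q^{(f)}_n(x)^2=q_n(f,f)\le q_{2n}(0)$, and with $\sigma^2(\beta)R_M\le 1$. Then every second-chaos $X$ satisfies
\[
  \tEE_f[X]\le \lambda'(\beta)^2\sqrt{\Var[X]}\,\Bigl(\sum_{n\le M}q_n(f,f)\sum_{j\le M}q_{2j}(0)\Bigr)^{1/2}\le C\,\sigma^2(\beta)R_M\sqrt{\Var[X]}\le C\sqrt{\Var[X]}\,.
\]
So your $X=K^{-1/2}\sum_k X_k$ has an $O(1)$ size-biased shift, not $c\sqrt K$. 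For a single unit-log block the shift is even $O((\log M)^{-1/2})$. The tilting step therefore yields no decay at all. In the critical 2D polymer, the size-biasing lives in chaos of order growing with $\log M$. This is why the paper takes as blocks the full strip partition functions $X_i(f)=Z^{\beta,\w}_{N_{i-1},N_i}(f)-1$, which contain all chaos orders and satisfy $\tEE_f[X(f)]=\Var[X(f)]$ exactly.

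Second, even with full-chaos blocks, your block geometry cannot give an order-one shift per block. This, and not inter-block correlations, is the origin of the logarithm in the statement. By \Cref{lem:var-strip}, a strip $[a,b]$ has variance
\[
  v\asymp\frac{\log(b/a)}{1+\abs{\th(b,\beta)}}\,,\qquad \abs{\th(b,\beta)}=\abs{\th(M,\beta)}+\log(M/b)+O(1)\,.
\]
Hence unit-log blocks far below $M$ carry $v_k\ll 1$. For any weights, $(\sum_k w_kv_k)^2/\sum_k w_k^2v_k\le\sum_k v_k$. For your unit-log blocks this sum is $\asymp\log\bigl(1+L/(1+\abs{\th(M,\beta)})\bigr)$ with $L=\log(M/(1+R^2))$. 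So with Chebyshev-type control, as in your plan, you can never get polynomial decay in $K$.

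Coarser blocks do not help, because $\tVV_f[X_i]\le\EE[Z_i^3]$ is only controllable, via hypercontractivity $\EE[Z_i^3]\lesssim\EE[Z_i^2]^{3/2}$, when the block variance is $O(1)$. Order-one variance per block forces $\log(N_i/N_{i-1})\asymp 1+\abs{\th(M,\beta)}+\log(M/N_i)$. That means the doubly exponential scales $N_i=N^{1-\alpha_0 2^{-i}}$, with only $k=\log_2\bigl(L/(1+\abs{\th})\bigr)$ blocks. Polynomial decay is recovered only afterwards, through the coarse-graining of \Cref{prop:coarse}.

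Finally, the $\tPP$-variance you leave open is where the real work lies. Diagonal terms are bounded by the hypercontractivity estimate above. Off-diagonal terms satisfy $\tC_f[X_i,X_j]\lesssim N_i/N_{j-1}$ by an explicit chaos computation, and these are summable precisely because of the doubly exponential spacing. \Cref{lem:change-measure} then gives the bound $C(1+k)^{-1/2}$.
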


The proof of \Cref{prop:key} is carried out in \Cref{sec:keyprop} and follows a change of measure argument inspired by~\cite{BCT25} but which needs to be adapted here.
Note that \Cref{prop:key} already shows that the half-moment goes to \(0\) if the second moment diverges (in the case \(\th(M,\beta)\leq 0\)), recalling \Cref{prop:var-polymer} and \eqref{eq:asymp-V}.

\subsubsection*{Step 2: Scale-tailored coarse-graining}

Second, we use a coarse-graining procedure to obtain the correct decay for the fractional moment.
We will prove the following.

\begin{proposition}[Coarse-graining]
  \label{prop:coarse}
  For \(\beta \in (0,1)\), \(N \in \N\) and \(0\leq R \leq \sqrt{N}\), let us introduce some intermediate scales \(M_0\coloneqq \lfloor 1+R^2\rfloor <M_1 < \cdots< M_{\ell} \leq N\).
  If we have that
  \begin{equation}
    \label{eq:all-scales}
    \forall 1
    \leq i
    \leq \ell \qquad \sup_{f \in \cM_1^{\disc}(\sqrt{M_{i-1}})} \EE\bigl[Z_{M_i-M_{i-1}}^{\beta,\w}(f)^{1/2}\bigr]
    \le \frac{1}{300} \,,
  \end{equation}
  then we get that
  \begin{equation}
    \label{eq:all-scales2}
    \sup_{f \in \cM_1^{\disc}(R)} \EE\bigl[Z_{N}^{\beta,\w}(f)^{1/2}\bigr]
    \le 20\, \e^{- \ell} \sup_{f \in \cM_1^{\disc}(\sqrt{M_{\ell}})} \EE\bigl[Z_{N-M_{\ell}}^{\beta,\w}(f)^{1/2}\bigr]
    \leq 20\, \e^{- \ell} \,.
  \end{equation}
\end{proposition}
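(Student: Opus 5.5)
We prove \Cref{prop:coarse} by one step of coarse-graining at a time and then iterate. The second inequality in \eqref{eq:all-scales2} is immediate from Jensen, since \(\EE[Z^{1/2}]\le \EE[Z]^{1/2}=1\). So everything rests on a one-step contraction. For \(1\le i\le \ell\), set \(A_i\coloneqq \sup_{f\in\cM_1^{\disc}(\sqrt{M_{i-1}})}\EE[Z_{N-M_{i-1}}^{\beta,\w}(f)^{1/2}]\). We aim to show \(A_i\le \e^{-1}A_{i+1}\) for each \(i\), with the convention that \(A_{\ell+1}\) is the supremum appearing in the middle term of \eqref{eq:all-scales2}. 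We also need \(\sup_{f\in\cM_1^{\disc}(R)}\EE[Z_N(f)^{1/2}]\le 20\,A_1\), which holds with some room since \(R\le\sqrt{M_0}\) up to a constant. Iterating gives the factor \(20\,\e^{-\ell}\).

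For the one-step bound we use the Markov property at time \(m\coloneqq M_i-M_{i-1}\). Write
\[
  Z_{N-M_{i-1}}(f)=\sum_{y} Z_{m}(f;y)\, Z_{N-M_i}\circ\theta_m(y),
\]
where \(Z_m(f;y)\) is the point-to-point partition function on \([0,m]\), and \(\theta_m\) shifts the environment in time, so the second factor is independent of the first. Partition \(\Z^2\) into boxes \(B_k\) of side \(\sqrt{M_i}\), centered at \(k\sqrt{M_i}\). For each \(k\), put \(w_k\coloneqq\sum_{y\in B_k}Z_m(f;y)\) and let \(g_k\) be the normalized restriction of \(Z_m(f;\cdot)\) to \(B_k\). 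After recentering, \(g_k\) lies in \(\cM_1^{\disc}(\sqrt{M_i})\) up to a harmless constant in the radius, which can be absorbed by splitting each box into \(O(1)\) sub-boxes. Since \(\sqrt{a+b}\le\sqrt a+\sqrt b\), we get
\[
  \EE\bigl[Z_{N-M_{i-1}}(f)^{1/2}\bigr]\le \sum_k \EE[w_k^{1/2}]\,\EE\bigl[Z_{N-M_i}(g_k)^{1/2}\bigr]\le A_{i+1}\sum_k\EE[w_k^{1/2}],
\]
using independence given the first block and translation invariance. (Strictly, \(g_k\) is random and measurable with respect to the first block, so we condition on that block.)

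It remains to show \(\sum_k\EE[w_k^{1/2}]\le \e^{-1}\). Here hypothesis \eqref{eq:all-scales} and Gaussian decay enter. The box \(k=0\), together with the \(O(1)\) neighbouring boxes, contributes at most \(\EE[Z_m(f)^{1/2}]\le 1/300\) each, so together they give at most \(C/300\). For \(\abs{k}\) large, Jensen gives \(\EE[w_k^{1/2}]\le \E_f[S_m\in B_k]^{1/2}\). Since \(m\le M_i\) and \(f\) is supported within radius \(\sqrt{M_{i-1}}\le\sqrt{M_i}\), this is at most \(C\e^{-c\abs{k}^2}\). That bound is summable, but it is small only for large \(\abs{k}\). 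The difficulty is the intermediate range, where Jensen gives order-one contributions. To handle it, we use \(w_k\le Z_m(f)\) for \(\abs{k}\le K\) and apply Hölder between the half-moment and the tail probability. Concretely, write \(\EE[w_k^{1/2}]\le \EE[Z_m(f)^{1/2}]^{1/2}\,\EE[w_k^{1/2}]^{1/2}\) with a truncation argument, for instance using \(\EE[w_k^{1/2}]\le\EE[\min(Z_m(f),w_k)^{1/2}]\) and interpolating. This yields the bound \((1/300)^{1/2}\,C\e^{-c\abs{k}^2/2}\), and summing over \(k\) gives something below \(\e^{-1}\). The constant \(300\) is chosen precisely so that this sum closes.

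The main obstacle is this final summation over \(k\): we must justify the Hölder-type interpolation between the small half-moment of the total mass and the Gaussian tail for each box, with numerical constants good enough to beat \(\e^{-1}\). The rest is standard coarse-graining bookkeeping, together with the initial step relating radius \(R\) to \(M_0\), which accounts for the prefactor \(20\).
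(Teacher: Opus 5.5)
Your architecture matches the paper's: Markov decomposition at the coarse-graining times, a box partition, $\sqrt{a+b}\le\sqrt a+\sqrt b$, conditioning plus translation invariance, and each box bounded either through the hypothesis (via $w_k\le Z_m(f)$) or through Jensen and a Gaussian tail. Your recursion $A_i\le\e^{-1}A_{i+1}$ repackages the paper's bound by $\sum_y\mathcal Q_0(y)\prod_i\sup_x\sum_y\mathcal Q_i(x,y)$. Your initial step even holds with constant $1$, since $\cM_1^{\disc}(R)\subset\cM_1^{\disc}(\sqrt{M_0})$ and $n\mapsto\EE[Z_n(f)^{1/2}]$ is non-increasing by \Cref{lem:monotone}; the paper instead gets $20$ from one more Jensen-only box decomposition at time $M_0$.

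The gap is the final summation, which you rightly call the crux but then resolve with a step that fails. There is in fact no intermediate-range difficulty. Since $w_k\le Z_m(f)$, you have $\EE[w_k^{1/2}]\le 1/300$ for \emph{every} box. So you can simply bound each box by $\min\{1/300,\P_f(S_m\in B_k)^{1/2}\}$, using the hypothesis on a fixed finite set of boxes and the Gaussian bound outside it. Your H\"older-type interpolation replaces this minimum by the geometric mean $\sqrt{(1/300)\,\P_f(S_m\in B_k)^{1/2}}$. That is larger and, as shown below, does not close with the constant $300$. Your truncation is also vacuous, since $\min(Z_m(f),w_k)=w_k$.

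Because the statement has explicit constants, the box geometry matters. The paper takes boxes $B(2y\sqrt{M_i},\sqrt{M_i})$ of $L^\infty$-radius $\sqrt{M_i}$, the largest allowed by the restart class $\cM_1^{\disc}(\sqrt{M_i})$, indexed by $\abs{y}_1$. Project on $e_1\pm e_2$ (two independent one-dimensional walks) and enlarge the box by $\sqrt{M_i}$ to absorb the starting point. Jensen then gives $\P\le 2\e^{-2(\abs{y}_1-2)^2}$ for $\abs{y}_1>2$. The $41$ boxes with $\abs{y}_1\le 4$ cost at most $41/300\approx 0.14$. All other boxes together cost less than $\sum_{n\ge 5}4n\sqrt2\,\e^{-(n-2)^2}<4\cdot 10^{-3}$. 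The total is well below $\e^{-1}$.

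By contrast, your geometric-mean bound on just the $13$ boxes with $\abs{y}_1\le 2$, where the Gaussian bound is trivial, is already $13/\sqrt{300}\approx 0.75>\e^{-1}$. The ``300 chosen precisely so that this sum closes'' claim is therefore not right. Your smaller boxes of side $\sqrt{M_i}$ cause a second problem: there are about four times as many boxes in the region where the Gaussian bound is not yet small. Even the min-split with the same crude tail bound then gives about $0.44>\e^{-1}$. Splitting into sub-boxes only makes this worse, and it is unnecessary: half-side $\sqrt{M_i}/2$ already fits in $\cM_1^{\disc}(\sqrt{M_i})$. With the paper's boxes and the min-split, your recursion yields \eqref{eq:all-scales2}.
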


The proof is given in \Cref{sec:coarse}, and follows the lines of \cite[Prop.~3.4]{BCT25}, with some important novelty here: the coarse-graining scales are not constant and are in fact growing, namely we will use scales with \(M_i\gg M_{i-1}\) in \Cref{prop:coarse}.
This is one important novelty of the present article, since coarse-graining procedures (to our knowledge) had only been used so far with constant scales \(M_k= k M\) for some fixed \(M\).
We then apply the previous sub-optimal upper bound on each of the coarse-graining scales.
Crucially, the estimate of \Cref{prop:key} is \emph{quantitative}, with a constant \(C_0\) uniform in the parameters \(R,M\), and allows in particular to bound fractional moments with \(R\ll \sqrt{M}\).
We will thus be able to tailor the scales \((M_i)_{1\leq \ell}\) in order for~\eqref{eq:all-scales} to be satisfied.
We then deduce that the half-moment decays exponentially in the number of scales.

\subsection{Proof of  \texorpdfstring{
    \Cref{thm:half-moment}}{} }
\label{sec:combining}

We now detail how to combine \Cref{prop:key,prop:coarse} in order to derive \Cref{thm:half-moment}.
The key is to choose the scales \(M_1 < \cdots < M_{\ell}\) in \Cref{prop:coarse} so that, applying \Cref{prop:key} on each scale, we obtain~\eqref{eq:all-scales}.
For simplicity, we first treat the case \(\th(N,\beta)\leq 0\); we explain how we get the result in the case \(\th(N,\beta) >0\) afterwards (it combines the proof in the case \(\th(N,\beta) \leq 0 \) together with some estimate from~\cite{BCT25} in the case \(\th(N,\beta)>0\)).

Let us also recall the definition~\eqref{def:theta-N-beta} of \(\th(N,\beta) \coloneqq \pi R_N - \frac{\pi}{\sigma^2(\beta)}\) and note that from the asymptotic \eqref{RN-asym} we have that as \(N,M \to\infty\),
\begin{align}
  \label{eq:th-asym}
  \th(M,\beta)
  = \th(N,\beta) + \pi (R_M-R_N)
  = \th(N,\beta) +\log \Bigl(\frac{M}{N}\Bigr) +o(1),
\end{align}
where the \(o(1)\) is \emph{uniform} in \(\beta\in (0,1)\).

\subsubsection{The case \texorpdfstring{\(\th(N,\beta) \leq 0\)}{}}

First of all, notice that \(\th(M_i-M_{i-1},\beta) \leq 0\) since \(M\mapsto \th(M,\beta)\) is non-decreasing (by the definition~\eqref{def:theta-N-beta}) and \(\th(N,\beta)\leq 0\).
Thus, applying \Cref{prop:key} directly, with \(M=M_i-M_{i-1}\) and \(R=\sqrt{M_{i-1}}\), we get that
\begin{equation}
  \label{eq:apply-key-prop}
  \sup_{f\in\cM_1^{\rm disc}(\sqrt{M_{i-1}})}\EE\Bigl[Z_{M_i-M_{i-1}}^{\beta,\w}(f)^{1/2}\Bigr]
  \leq C_0 \Biggl(1+\log\Biggl(\frac{\log(\frac{M_i-M_{i-1}}{1+M_{i-1}})}{1+\abs{\th(M_i-M_{i-1},\beta)}}\Biggr)\Biggr)^{-1/2}.
\end{equation}

We now make the following choice for the scales in the coarse-graining, in order to make the upper bound in~\eqref{eq:apply-key-prop} uniformly small at all scales (we omit the integer part for simplicity).
For \(i\geq 0\), define for some fixed \(\rho\geq 2\) (chosen large just below)
\begin{equation}
  \label{eq:scale-coarse}
  M_i
  \coloneqq N^{1- \alpha_0 \rho^{-i}}  \qquad \text{ with }  \quad \alpha_0
  \coloneqq \frac{\log \bigl(\frac{N}{1+R^2}\bigr)}{\log N}
\end{equation}
so in particular we indeed have \(M_0 = N^{1-\alpha_0} = 1+R^2\).
Then, we let
\begin{equation}
  \label{def:ell}
  \ell
  = \ell(R,N,\beta)
  \coloneqq \log_{\rho} \Biggl( \frac{\log\bigl(\frac{N}{1+R^2}\bigr)}{1+ \abs{\th(N,\beta)}} \Biggr) \quad \text{which implies that} \quad \alpha_0 \rho^{-\ell}
  = \frac{1+\abs{\th(N,\beta)}}{\log N}.
\end{equation}
One can check that for \(1\leq i\leq \ell\) we have the following scaling relations:
\begin{align}
  \label{eq:ratioM}
  \log \Bigl(\frac{M_i}{M_{i-1}}\Bigr)
   & = \alpha_0 \rho^{-i} (\rho-1) \log N \,,     \\
  \label{eq:ratioMM}
  \log\Bigl(\frac{M_i-M_{i-1}}{1+M_{i-1}}\Bigr)
   & =   \alpha_0 \rho^{-i} (\rho-1) \log N +O(1) \\
  \label{eq:ratioMN}
  \log \Bigl(\frac{M_i-M_{i-1}}{N} \Bigr)
   & = -\alpha_0 \rho^{-i} \log N +O(1)
\end{align}

On the other hand, recalling the relation \eqref{eq:th-asym} and using \eqref{def:ell} and \eqref{eq:ratioMN}, we have (uniformly in \(\beta\in (0,1)\)), since \(\th(M_{i}-M_{i-1},\beta),\th(N,\beta)\leq 0\) and \(M_{i}-M_{i-1}\leq N\),
\begin{align*}
  1+|\th(M_{i}-M_{i-1},\beta)|
   & = 1+|\th(N,\beta)| - \log \Bigl(\frac{M_i-M_{i-1}}{N}\Bigr) + o(1)        \\
   & = \alpha_0\rho^{-\ell}\log N + \alpha_0\rho^{-i} \log N +O(1) \,.
\end{align*}
In particular, we have that \(1+\abs{\th(M_{i}-M_{i-1},\beta)} \leq 2 \alpha_0  \rho^{-i} \log N +C\), so that
\begin{equation}
  \label{eq:th-concl}
  \frac{\log\bigl(\frac{M_i-M_{i-1}}{1+M_{i-1}}\bigr)}{1+ \abs{\th(M_i-M_{i-1},\beta)}}
  \geq  \frac{\alpha_0\rho^{-i}(\rho-1)\log N -C'}{2\alpha_0\rho^{-i}\log N+C'} \geq  c' (\rho-1) \,.
\end{equation}
Going back to~\eqref{eq:apply-key-prop}, we get that uniformly in \(1\leq i \leq \ell\)
\begin{equation}
  \label{eq:frac-moment}
  \sup_{f\in \cM_1^{\rm disc}(\sqrt{M_{i-1}})} \EE\bigl[ Z_{M_i-M_{i-1}}^{\beta,\w}(f)^{1/2} \bigr]
  \leq c \log \bigl( c' (\rho-1)\bigr)^{-1/2} \,.
\end{equation}
Therefore, if we fix \(\rho\) sufficiently large, namely \(\rho = 1+ \e^{(300 c)^2}/c'\), this proves~\eqref{eq:all-scales}.

Noting that \(M_{\ell}\leq N \), we can apply \Cref{prop:coarse} to conclude that
\[
  \sup_{f \in \cM_1^{\disc}(R)} \EE\bigl[Z_{N}^{\beta,\w}(f)^{1/2}\bigr]
  \le 20  \e^{- \ell}
  = 20 \Biggl( \frac{\log\bigl(\frac{N}{1+R^2}\bigr)}{1+ \abs{\th(N,\beta)}} \Biggr)^{-1/\log \rho} \,,
\]
the last identity following from the definition~\eqref{def:ell} of \(\ell = \ell(R,N,\beta)\).
This concludes the proof of \Cref{thm:half-moment} in the case \(\th(N,\beta)\leq0\), recalling the asymptotic~\eqref{eq:asymp-V} of the Volterra function~\(\mathcal{V}(\cdot)\).

\subsubsection{The case \texorpdfstring{\(\th(N,\beta) > 0\)}{}}.
The general procedure is rather similar to the case \(\th(N,\beta) \leq 0\) after reducing,  first,  to \(\th(N,\beta)=0\) and then taking advantage of some monotonicity properties.

To reduce to the case \(\th(N,\beta)=0\), we define \(\beta_c = \beta_c(N)\) by the relation \(\sigma^2(\beta_c) R_N=1\).
Recalling \eqref{def:theta-N-beta}, this implies that \(\th(N,\beta_c) =0\).
From \eqref{def:theta-N-beta} it is also evident that \(\th(N,\beta) > 0\) is equivalent to \(\beta >\beta_c\).
The monotonicity of the fractional moments in \(\beta\) (see \cite[Prop.~3.4]{Z24}) implies that \(\EE\bigl[ Z_{M_i-M_{i-1}}^{\beta,\, \w}(f)^{1/2} \bigr] \leq \EE\bigl[ Z_{M_i-M_{i-1}} ^{\beta_c, \,\w}(f)^{1/2} \bigr]\).

Choose, next,  the scales \((M_i)_{1\leq i\leq \ell}\) as in \eqref{eq:scale-coarse}.
Our choice of \(\beta_c\) such that \(\th(N,\beta_c) =0\) implies that the number of scales is
\begin{align}\label{eq:chooseell}
  \ell
  = \ell(R,N,\beta_c)
  \coloneqq \log_{\rho} \Biggl( \frac{\log\bigl(\frac{N}{1+R^2}\bigr)}{1+ \abs{\th(N,\beta_c)}} \Biggr)
  = \log_{\rho} \Bigl( \log\Bigl(\frac{N}{1+R^2}\Bigr) \Bigr) \,.
\end{align}
Applying \Cref{prop:key} and using \eqref{eq:ratioMM}, we get similarly as~\eqref{eq:frac-moment} that
\begin{align*}
  \sup_{f\in \cM_1^{\rm disc}(\sqrt{M_{i-1}})} \EE\bigl[ Z_{M_i-M_{i-1}}^{\beta,\w}(f)^{1/2} \bigr]
   & \leq c \Biggl( \log\Biggl( \frac{\log\bigl(\frac{M_i-M_{i-1}}{1+M_{i-1}}\bigr)} {1+ \abs{\th(M_i-M_{i-1},\beta_c)}} \Biggr) \Biggr)^{-1/2} \\
   & \leq c \log \bigl(c'(\rho-1)\bigr)^{-1/2} \,.
\end{align*}
We then choose \(\rho\) large enough, so that~\eqref{eq:all-scales} is verified, which then by \eqref{eq:all-scales2} and the choice of \(\ell\) in \eqref{eq:chooseell} leads to
\begin{align}
  \label{eq:posth1}
  \sup_{f \in \cM_1^{\disc}(R)} \EE\bigl[Z_{N}^{\beta,\w}(f)^{1/2}\bigr]
   & \le 	20 \, \e^{- \ell} \sup_{f \in \cM_1^{\disc}(\sqrt{M_{\ell}})} \EE\bigl[Z_{N-M_{\ell}}^{\beta,\w}(f)^{1/2}\bigr] \notag                          \\
   & =	20\,\log\Bigl(\frac{N}{1+R^2}\Bigr)^{-1/\log \rho} \sup_{f \in \cM_1^{\disc}(\sqrt{M_\ell})} \EE\bigl[Z_{N-M_{\ell}}^{\beta,\w}(f)^{1/2}\bigr] .
\end{align}
Furthermore, \(M_{\ell} = N^{1- \frac{1}{\log N}} = \e^{-1} N \leq N/2\).
Thus the monotonicity of \(n\mapsto \EE[Z_{n}^{\beta,\w}(f)^{1/2}]\) (as checked in \Cref{lem:monotone} below) gives
\begin{align}
  \label{eq:posth2}
  \sup_{f \in \cM_1^{\disc}(R)} \EE\bigl[Z_{N}^{\beta,\w}(f)^{1/2}\bigr]
  \leq  20\,  \log\Bigl(\frac{N}{1+R^2}\Bigr)^{-1/\log \rho} \sup_{f \in \cM_1^{\disc}(\sqrt{N/2})} \EE\bigl[Z_{N/2}^{\beta,\w}(f)^{1/2}\bigr] \,.
\end{align}
The conclusion then follows from the estimate from~\cite[Thm.~2.2]{BCT25}, which gives that for \(\th(N,\beta)\geq 0\)
\[
  \sup_{f \in \cM_1^{\disc}(\sqrt{N/2})} \EE\bigl[Z_{N/2}^{\beta,\w}(f)^{1/2}\bigr]
  \leq C \exp\bigl( - c\, \e^{\th(N,\beta)}\bigr)
  \leq C \,  \cV\bigl(\e^{\th(N,\beta)} \bigr) ^{-c} \,
\]
where the second inequality follows from the asymptotics~\eqref{eq:asymp-V} of the Volterra function.
Combined with \eqref{eq:posth1}  and \eqref{eq:posth2} this concludes the proof of \Cref{thm:half-moment} for \(\th(N,\beta) >0\).
\qed

\begin{lemma}
  \label{lem:monotone}
  The sequence  \(n\mapsto Z_{n}^{\beta,\w}(f)\) is non-decreasing for the convex order.
  In particular, \(n\mapsto \EE\bigl[Z_{n}^{\beta,\w}(f)^{1/2}\bigr]\) is non-increasing and \(n\mapsto \EE\bigl[Z_{n}^{\beta,\w}(f)^{2}\bigr]\) is non-decreasing.
\end{lemma}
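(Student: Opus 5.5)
The plan is to exhibit \(Z_n^{\beta,\w}(f)\) as a martingale in \(n\) with respect to the filtration \(\mathcal{F}_n \coloneqq \sigma\bigl(\w(k,x) \colon k\le n,\ x\in\Z^2\bigr)\). Convex-order monotonicity then follows from conditional Jensen. Fix \(f\) and write
\[
  Z_{n+1}^{\beta,\w}(f)
  = \sum_{x} f(x)\, \E_x\Bigl[\e^{\sum_{k=1}^{n}(\beta\w(k,S_k)-\lambda(\beta))}\, \e^{\beta\w(n+1,S_{n+1})-\lambda(\beta)}\Bigr].
\]
The first exponential factor is \(\mathcal{F}_n\)-measurable. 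The variables \(\w(n+1,\cdot)\) are independent of \(\mathcal{F}_n\), and for every \(y\) we have \(\EE[\e^{\beta\w(n+1,y)-\lambda(\beta)}]=1\) by the definition of \(\lambda(\beta)\).

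Everything is nonnegative, so Tonelli allows us to exchange \(\EE[\,\cdot\mid\mathcal{F}_n]\) with \(\sum_x f(x)\E_x\). This gives
\[
  \EE\bigl[Z_{n+1}^{\beta,\w}(f)\mid\mathcal{F}_n\bigr]=Z_n^{\beta,\w}(f).
\]
Integrability is automatic, since \(\EE[Z_n^{\beta,\w}(f)]=\sum_x f(x)=1\).

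Now take any convex \(\varphi\) for which the relevant expectations make sense, for instance convex \(\varphi\) with \(\EE[\abs{\varphi(Z_{n+1})}]<\infty\). Conditional Jensen gives
\[
  \varphi\bigl(Z_n^{\beta,\w}(f)\bigr)\le \EE\bigl[\varphi\bigl(Z_{n+1}^{\beta,\w}(f)\bigr)\mid\mathcal{F}_n\bigr].
\]
Taking expectations yields \(\EE[\varphi(Z_n)]\le\EE[\varphi(Z_{n+1})]\), which is precisely non-decrease in the convex order.

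We then specialize. With \(\varphi(x)=x^2\) the second moment is non-decreasing; it is finite because \(\lambda(2\beta)<\infty\) for \(\beta\in(0,1)\) and \(Z_n\) is a finite-step quantity. With \(\varphi(x)=-x^{1/2}\), which is convex on \([0,\infty)\) and bounded above by \(0\), we get that \(n\mapsto\EE[Z_n^{1/2}]\) is non-increasing. Finiteness here follows from \(\EE[Z_n^{1/2}]\le \EE[Z_n]^{1/2}=1\). There is no real obstacle. The only point requiring care is to justify the exchange of conditional expectation with the sum over \(x\) and the walk expectation, and positivity handles this via the conditional monotone convergence theorem.
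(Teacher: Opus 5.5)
Your proof is correct and follows essentially the same route as the paper: the martingale property of \(Z_n^{\beta,\w}(f)\) combined with conditional Jensen, then specializing to \(x^2\) and \(\sqrt{x}\) (you use the convex function \(-\sqrt{x}\), the paper uses concavity of \(\sqrt{x}\)). The only difference is that you also verify the martingale identity and the integrability conditions, which the paper takes for granted.
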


\begin{proof}
  Let \(g:\R_+ \to \R\) be a convex function.
  By Jensen's inequality, we get that
  \[
    \EE\bigl[ g(Z_{n+1}^{\beta,\omega}(f)) \,\bigm|\, \mathcal{F}_n \bigr]
    \geq g\bigl( \EE[ Z_{n+1}^{\beta,\omega}(f) \,|\, \mathcal{F}_n ] \bigr)
    = g\bigl( Z_{n}^{\beta,\omega}(f)\bigr) \,,
  \]
  where we have used the martingale property of \(Z_{n}^{\beta,\omega}(f)\).
  Taking the expectation again, we obtain that \( \EE\bigl[ g(Z_{n+1}^{\beta,\omega}(f))\bigr] \geq \EE\bigl[g( Z_{n}^{\beta,\omega}(f))\bigr]\), which proves the convex monotonicity.

  The second part of the statement simply follows by applying the previous inequality to \(g(x)=x^2\) or \(g(x)=\sqrt{x}\) (the latter being concave, the monotonicity is reversed).
\end{proof}

\section{Proof of the key \texorpdfstring{\Cref{prop:key}}{the key proposition}}
\label{sec:keyprop}

The proof of \Cref{prop:key} goes through a change of measure argument, which is designed to take into account the interaction of the polymer path with the ambient disorder.
The idea of a change of measure was first introduced in \cite{DGLT09} and then refined in the course of several studies, e.g.
\cite{Lac11, GLT11, BL17, BL18, JL25, BCT25}.
There are two main steps in this methodology: The first one is to `tilt' the disorder law \(\PP\) by a suitable function that encapsulates the interaction between the polymer path and the ambient disorder, and the choice needs to be computationally tractable.
The second step is to perform the estimate.
In our situation we also need an intermediate step, which is to carefully choose the scales.
We carry out these three steps in the following subsections.

\subsection{Change of measure and the proxy}
\label{sec:change-measure}

Recall that if \(f\) is a probability distribution on~\(\Z^2\), then we have that \(\EE[Z_N^{\beta,\w}(f)]=1\).
Since \(Z_N^{\beta,\w}(f)\geq 0\), we can interpret this partition function as a probability density on the \((\Omega, \cF, \PP)\) generated by the disorder variables \(\{\omega(n,x) \colon n\in \N, x\in \Z^2 \}\).
Let us also denote \(\cF_N\coloneqq \sigma( \omega(n,x) \colon n \leq N, x\in \Z^2 )\).

\begin{definition}[Size-biased measure]
  \label{def:sizebiased}
  For every probability mass function \(f\) on \(\Z^2\), we define the \emph{size-biased measure}  on \((\Omega, \cF, \PP)\), corresponding to initial distribution of the polymer~\(f\) by \(\dd \tPP_f = Z_{N}^{\beta,\w}(f) \dd \PP\), \textit{i.e.}\
  \[
    \tPP_f(A)
    = \tPP_{f,N}^{\beta} (A)
    \coloneqq \EE\bigl[ \indic_{A} \, Z_{N}^{\beta,\w}(f) \bigr] \qquad \text{for any } A\in \cF \,.
  \]
\end{definition}
The probability measure \(\tPP_f\) then defines the operators \(\tEE_f[\cdot]\), \(\tVV_f[\cdot]\) and \(\tC_f[\cdot,\cdot]\) as the expectation, variance and covariance under \(\tPP_f\).
The next result shows that  in order to control the half-moment of \(Z_N^{\beta,\w}(f)\), it is enough to construct a proxy \(X(f)\) whose mean has a large shift under the size-biased law, relative to its fluctuations under the two measures.
\begin{lemma}
  \label{lem:change-measure}
  Let \(Z>0\) with \(\EE[Z]=1\) and define the size-biased probability measure \(\dd\tPP=Z\dd\PP\).
  Let \(X\in L^2(\PP)\cap L^2(\tPP)\) and set \(\Delta_X \coloneqq \tEE[X]-\EE[X]\), \(\Sigma_X^2 \coloneqq \Var[X]+\tVV[X]\).
  Then
  \begin{equation}
    \label{eq:size-biased-half-moment}
    \EE\bigl[\sqrt{Z} \bigr]
    \leq \sqrt{\frac{2\Sigma_X^2}{2\Sigma_X^2 + \Delta_X^2 }}\,.
  \end{equation}
\end{lemma}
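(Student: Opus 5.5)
The plan is a Hellinger-type Cauchy–Schwarz argument applied to the identity \(\Delta_X=\EE[(Z-1)(X-c)]\), which holds for any constant \(c\in\R\) because \(\EE[Z-1]=0\). Write \(h\coloneqq\EE[\sqrt Z]\), \(Y\coloneqq X-c\), and factor \(Z-1=(\sqrt Z-1)(\sqrt Z+1)\). The two Hellinger-type quantities involved are
\[
  \EE\bigl[(\sqrt Z-1)^2\bigr]=2-2h
  \qquad\text{and}\qquad
  \EE\bigl[(\sqrt Z+1)^2\bigr]=2+2h,
\]
both using \(\EE[Z]=1\). The point is to apply Cauchy–Schwarz in two complementary ways, putting the weight \(Y^2\) on each factor in turn, and then add the results.

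Set \(a\coloneqq\EE[(\sqrt Z+1)^2Y^2]\) and \(b\coloneqq\EE[(\sqrt Z-1)^2Y^2]\). Since \((\sqrt Z+1)^2+(\sqrt Z-1)^2=2(Z+1)\), we get
\[
  a+b=2\,\EE[(Z+1)Y^2]=2\bigl(\tEE[(X-c)^2]+\EE[(X-c)^2]\bigr)\eqqcolon 2S.
\]
Cauchy–Schwarz gives two bounds:
\[
  \Delta_X^2\le \EE\bigl[(\sqrt Z-1)^2\bigr]\,a=(2-2h)\,a
  \qquad\text{and}\qquad
  \Delta_X^2\le \EE\bigl[(\sqrt Z+1)^2\bigr]\,b=(2+2h)\,b.
\]
Dividing each by its prefactor and adding yields
\[
  \Delta_X^2\Bigl(\frac{1}{2-2h}+\frac{1}{2+2h}\Bigr)
  =\frac{\Delta_X^2}{1-h^2}\le a+b=2S,
\]
that is, \(\Delta_X^2\le 2S(1-h^2)\). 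If \(h=1\), then \(Z=1\) a.s., so \(\Delta_X=0\) and the claim is trivial; hence we may assume \(h<1\).

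Next I optimize over \(c\). Taking \(c=\tfrac12(\EE[X]+\tEE[X])\), each of the two second moments equals its variance plus \(\Delta_X^2/4\), so \(S=\Sigma_X^2+\Delta_X^2/2\). Substituting, \(\Delta_X^2\le(2\Sigma_X^2+\Delta_X^2)(1-h^2)\), which rearranges to
\[
  h^2\le 1-\frac{\Delta_X^2}{2\Sigma_X^2+\Delta_X^2}=\frac{2\Sigma_X^2}{2\Sigma_X^2+\Delta_X^2},
\]
which is \eqref{eq:size-biased-half-moment}. The integrability assumption \(X\in L^2(\PP)\cap L^2(\tPP)\) ensures that \(a\), \(b\) and \(S\) are finite, so all the steps are justified.

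The main obstacle is getting the sharp constant. A single Cauchy–Schwarz, bounding \((\sqrt Z+1)^2\le 2(Z+1)\), only gives an estimate that saturates at \(h\le\frac12\) as \(\Delta_X\to\infty\), and so cannot produce decay. The step that fixes this is the splitting of \(2(Z+1)\) into the two complementary weights \((\sqrt Z\pm1)^2\), followed by summing the two inequalities.
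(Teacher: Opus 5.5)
Your proof is correct and gives the same sharp bound, but by a different route from the paper.

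The paper's route is as follows, with $\mu=\EE[\sqrt Z]$:
\begin{itemize}
\item It writes $\mu\Delta_X$ as the difference $\Cov[X,Z^{1/2}]-\tC[X,Z^{-1/2}]$. This uses that $Z^{1/2}$ under $\PP$ and $Z^{-1/2}$ under $\tPP$ both have mean $\mu$ and variance $1-\mu^2$.
\item It then applies $(u-v)^2\le 2(u^2+v^2)$ and Cauchy--Schwarz under each measure separately. This gives $\mu^2\Delta_X^2\le 2\Sigma_X^2(1-\mu^2)$ directly, with each measure automatically centering $X$ at its own mean.
\end{itemize}

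Your route differs in two ways:
\begin{itemize}
\item You work under $\PP$ alone, using the Hellinger factorization $Z-1=(\sqrt Z-1)(\sqrt Z+1)$.
\item Your harmonic combination of the two complementary Cauchy--Schwarz bounds plays the role of the paper's $(u-v)^2\le 2(u^2+v^2)$ step.
\end{itemize}
The price is a common centering $c$ for both measures. The midpoint choice costs exactly $\Delta_X^2/2$, and after rearranging you get $h^2\Delta_X^2\le 2\Sigma_X^2(1-h^2)$. This is precisely the paper's intermediate inequality, so nothing is lost.

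Each version has its advantage. The paper's is shorter and makes the $\PP$/$\tPP$ duality transparent, which also makes the equality cases in the subsequent remark easy to read off. Yours ties the estimate directly to the Hellinger quantity $\EE[(\sqrt Z-1)^2]=2-2h$. It also yields the whole family of bounds $\Delta_X^2\le 2(1-h^2)\bigl(\EE[(X-c)^2]+\tEE[(X-c)^2]\bigr)$, $c\in\R$.

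One small simplification: multiply your two Cauchy--Schwarz bounds by $2+2h$ and $2-2h$ respectively, and add. This gives $\Delta_X^2\le(1-h^2)(a+b)$ without dividing, so the separate treatment of $h=1$ is unnecessary.
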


\begin{proof}
  Set \(\mu \coloneqq\EE[Z^{1/2}]\).
  Then by definition of the size-biased measure, we have
  \[
    \tEE[Z^{-1/2}]
    = \EE[Z^{1/2}]
    = \mu
    \qquad \text{ and } \qquad 
    \tVV[Z^{-1/2}]
    = 1-\mu^2
    = \Var[Z^{1/2}].
  \]
  since \(\tEE[Z^{-1}]= 1=\EE[Z]\).
  Moreover,
  \begin{align*}
    \Cov[X,Z^{1/2}]
     & = \EE[XZ^{1/2}]-\EE[X]\mu,   \\
    \tC[X,Z^{-1/2}]
     & = \tEE[XZ^{-1/2}]-\tEE[X]\mu
    = \EE[XZ^{1/2}]-\tEE[X]\mu.
  \end{align*}
  Consequently, by subtracting the two equalities, we get
  \[
    \Cov[X,Z^{1/2}]-\tC[X,Z^{-1/2}]
    = \mu\bigl(\tEE[X]-\EE[X]\bigr)
    =\mu \Delta_X.
  \]
  Taking the squares of both sides and using \((a-b)^2\leq 2(a^2+b^2)\) gives
  \[
    \mu^2 \Delta_X^2
    \leq 2\bigl(\Cov[X,Z^{1/2}]^2+\tC[X,Z^{-1/2}]^2\bigr).
  \]
  Applying Cauchy--Schwarz under \(\PP\) to the first term and under \(\tPP\) to the second term, we thus get
  \[
    \mu^2\Delta_X^2
    \leq 2\bigl(\Var[X]\Var[Z^{1/2}]+\tVV[X]\tVV[Z^{-1/2}]\bigr)
    = 2\Sigma^2_X (1-\mu^2).
  \]
  Rearranging the terms, we obtain~\eqref{eq:size-biased-half-moment}.
\end{proof}

\begin{remark}
  The inequality \eqref{eq:size-biased-half-moment} is sharp, equality is indeed attained in exactly two cases:
  \begin{itemize}
  \item The constant \(Z\equiv 1\) and any \(X\), since \(\Delta_X\equiv 0\), which makes both sides of \eqref{eq:size-biased-half-moment} equal to \(1\).
  \item The two-point distributions defined by \(\PP(Z=c)=\frac{1}{1+c}\), \(\PP(Z=\frac1c)=\frac{c}{1+c}\) for some \(c\in(0,1)\) and \(X=\mathrm{a} \sqrt{Z} +\mathrm{b}\) for some \(\mathrm{a}\neq 0, \mathrm{b}\in\R\).
  \end{itemize}
  
\end{remark}

We now introduce a proxy \(X(f)\) for the partition function, which we think as a coarse-grained version of the chaos expansion of \(Z_N^{\beta,\w}(f)\) (see \cite[Section~4.3]{BCT25} for a more detailed discussion).
We show below that, indeed, the mean of \(X(f)\) has a large shift under the size-biased law, relative to its fluctuations under the original and size-biased distributions.

We choose scales \(N_0\coloneqq1+R^2<N_1<\cdots <N_k\), for some \(k=k(R,N,\beta)\), which we make explicit below, see~\eqref{def:scales}, and we define
\begin{equation}
  \label{def:Xf}
  X(f)
  \coloneqq \sum_{i=1}^{k} X_i(f) \qquad \text{ with } \quad X_i(f)
  \coloneqq Z_{N_{i-1},N_i}^{\beta,\w}(f)-1 \,,
\end{equation}
where for \(a\leq b\) and \(f:\Z^2\to\R\), we have introduced the notation
\begin{equation}
  \label{eq:partition-function-strip}
  Z_{a,b}^{\beta,\omega}(f)
  \coloneqq \sum_{x\in \Z^2} f(x) \E_x\Bigl[ \exp\Bigl( \sum_{n=a+1}^b \bigl(\beta \omega_{n,S_n} -\lambda(\beta)\bigr)\Bigr)\Bigr] \,.
\end{equation}
In other words, \(Z_{a,b}^{\beta,\w}(f)\) corresponds to the partition function of a polymer which starts at time~\(0\) from an initial distribution \(f\) but moves in an environment where disorder is switched off outside of the interval \((a,b]\).
Note also that \(Z_{a,b}^{\beta,\omega}(f) \stackrel{d}{=} Z_{b-a}^{\beta, \omega} (f\ast q_a)\), where \(q_a(\cdot)\) is the random walk kernel.
This will allow for a smoothing (diffusing) effect that is important in order to keep the fluctuations under control.

Notice that all the \(X_i(f)\) are centered and independent, and they satisfy
\[
  \tEE_f[X_i(f)]
  = \EE[Z_{N}^{\beta,\w}(f)(Z_{N_{i-1},N_i}^{\beta,\w}(f)-1)]
  = \EE[X_i(f)^2]  \,.
\]
In particular, we obtain that
\begin{equation}
  \label{eq:tEE=Var}
  \tEE_f[X(f)]
  = \Var[X(f)]
  = \sum_{i=1}^k \Var\bigl[Z_{N_{i-1},N_i}^{\beta,\w}(f) \bigr] \,.
\end{equation}
We now apply \Cref{lem:change-measure} with \(Z=Z_N^{\beta,\w}(f)\) and \(X=X(f)\).
Since \(\EE[X(f)]=0\), relation \eqref{eq:tEE=Var} gives
\[
  \tEE_f[X(f)]-\EE[X(f)]
  = \Var[X(f)].
\]
Therefore, in view of \Cref{lem:change-measure}, we have that
\begin{equation}
  \label{eq:apply-change-measure}
  \EE\bigl[Z_{N}^{\beta,\w}(f)^{1/2}\bigr]
  \leq \Bigl(\frac{2 \varepsilon_X}{ 2\varepsilon_X +1}\Bigr)^{1/2}  \qquad \text{ with }\ \varepsilon_X
  \coloneqq \frac{1}{\Var[X(f)]} + \frac{\tVV[X(f)]}{\Var[X(f)]^2} \,.
\end{equation}
It thus remains to obtain a lower bound on \(\Var[X(f)]\) and an upper bound on \(\tVV_f[X(f)]\), uniformly over \(f\in\cM_1^{\rm disc}(R)\).

\subsection{Choosing the scales \texorpdfstring{\(N_1<\cdots<N_k\)}{}, variance and size-biased variance}
\label{sec:scales-moment}

We now specify how we choose the scales in the definition~\eqref{def:Xf} of~\(X(f)\) (again, we omit integer parts for simplicity).
We define scales in analogy with~\eqref{eq:scale-coarse} (with here \(\rho=2\)): for all \(0\leq i \leq k\), we set
\begin{equation}
  \label{def:scales}
  N_i
  \coloneqq N^{1-\alpha_0 2^{-i}} \,, \qquad \text{ with } \alpha_0
  \coloneqq \frac{\log(\frac{N}{1+R^2})}{\log N} \,,
\end{equation}
so in particular \(N_0 = 1+R^2\).
From now on, we assume that \(\log\bigl(\frac{N}{1+R^2}\bigr) \geq 2 (1+ \abs{\th(N,\beta)})\), otherwise the statement in \Cref{prop:key} holds trivially.
We then choose the number of scales to be
\begin{equation}
  \label{def:k}
  k
  = k(R,N,\beta)
  \coloneqq \log_{2}\Biggl(\frac{\log(\frac{N}{1+R^2})}{1+\abs{\th(N,\beta)}}\Biggr) \geq 1 \,.
\end{equation}
Let us notice that, as in~\eqref{def:ell}, the choice of \(k\) is made in such a way that \(\alpha_0 2^{-i} \log N \geq \alpha_0 2^{-k} \log N = 1+ \abs{\th(N,\beta)}\) and ensures that \(N_{i}-N_{i-1} \geq \frac12 N_{i}\) for all \(1\leq i \leq k\), see e.g.~\eqref{eq:ratioM} (with \(\rho=2\) here).

\subsubsection*{Variance estimate}

The choice of the scales is in fact made precisely so that the second moment of~\(X_i(f)\) is of constant order, independently of \(1\leq i\leq k\).
\begin{lemma}
  \label{lem:var-Xj}
  There is a constant \(c_0 \in (0,1)\) such that, for any \(f \in \cM_1^{\rm disc}(R)\), if \((N_i)_{1\leq i\leq k}\) are defined as in~\eqref{def:scales} with \(k\) as in \eqref{def:k}, for any \(1\leq i \leq k\) we have
  \[
    c_0
    \leq \Var[Z_{N_{i-1},N_i}^{\beta,\w}(f)] = \Var[X_i(f)]
    \leq \frac{1}{c_0} \,.
  \]
\end{lemma}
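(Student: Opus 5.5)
We reduce \Cref{lem:var-Xj} to a uniform two-sided second moment estimate for partition functions in a time strip, started from a diffused initial law. This is the kind of estimate collected in \Cref{sec:second-moment}, which also proves \Cref{prop:var-polymer}.

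First we rewrite the variance via the chaos (or Markov) expansion. Replica computations give
\[
  \Var[Z_{N_{i-1},N_i}^{\beta,\w}(f)] = \sum_{x,y} f(x)f(y)\,\E_{x,y}\Bigl[(1+\sigma^2)^{L_{N_{i-1},N_i}}-1\Bigr],
\]
where \(\sigma^2=\sigma^2(\beta)\) and \(L_{a,b}\) is the number of collisions \(S_n=S'_n\) of two independent walks at times \(n\in(a,b]\). Decompose at the first collision time \(n\in(N_{i-1},N_i]\). The walks must first meet at time \(n\), which has probability roughly \(\sum_{x,y} f(x)f(y)q_{2n}(x-y)\). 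Since \(f\) is supported in a ball of radius \(R\lesssim\sqrt{N_0}\le\sqrt{N_{i-1}}\) and \(n\ge N_{i-1}\), the local limit theorem gives \(q_{2n}(x-y)\asymp 1/n\), uniformly in such \(f\). This smoothing effect is exactly why \(f\) only enters through its support.

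After the first collision, the walks accumulate the renewal factor \(U^\beta(m)\coloneqq \E_{0,0}[(1+\sigma^2)^{L_m}]\)-type quantity over the remaining time \(m=N_i-n\). The variance is therefore comparable, up to constants, to
\[
  \sigma^2\sum_{n=N_{i-1}}^{N_i}\frac{1}{n}\,\overline U^{\beta}(N_i-n),
\]
with \(\overline U^\beta(m)=\sum_{j\le m}U^\beta(j)\) the Green function of the collision renewal. For this I would use the uniform second-moment bounds from \Cref{sec:second-moment}, namely the point-to-plane version of \Cref{prop:var-polymer}. These give \(\sigma^2\,\overline U^\beta(m)\asymp \cV(c\,\e^{\th(m,\beta)})\) up to constants, uniformly in \(\beta\in(0,1)\), \(m\).

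Next we plug in the scales. For \(n\in[N_{i-1},N_i/2]\) we have \(N_i-n\asymp N_i\). Since \(\th(N_i,\beta)\le 0\), \(\th(\cdot,\beta)\) is monotone and \eqref{eq:th-asym} holds, we get
\[
  |\th(N_i-n,\beta)|\asymp 1+|\th(N,\beta)|+\log(N/N_i) = 1+|\th(N,\beta)|+\alpha_0 2^{-i}\log N .
\]
By \eqref{def:k}, this is between \(\alpha_02^{-i}\log N\) and \(2\alpha_02^{-i}\log N+C\). By \eqref{eq:asymp-V}, \(\cV(c\e^{\th})\asymp 1/|\th|\) for \(\th\le 0\) bounded away from \(0\), and is of order one otherwise. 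Summing \(1/n\) over \([N_{i-1},N_i/2]\) gives \(\log(N_i/N_{i-1})-O(1)=\alpha_02^{-i}\log N-O(1)\) by \eqref{eq:ratioM}. The product is thus of order one, which gives both bounds on this range.

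For \(n\in(N_i/2,N_i]\), the factor \(1/n\asymp 1/N_i\) and \(\sum_{m\le N_i/2}\overline U^\beta(m)\le N_i\,\overline U^\beta(N_i)\). This contributes at most \(\sigma^2\overline U^\beta(N_i)\lesssim 1\), so it is harmless for the upper bound and can be dropped for the lower bound. The condition \(N_i-N_{i-1}\ge \frac12 N_i\) guarantees that the main range is nonempty.

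The main obstacle is the uniform two-sided estimate on \(\sigma^2\overline U^\beta(m)\) in terms of \(\cV(\e^{\th(m,\beta)})\) over all \(\beta\in(0,1)\), \(m\in\N\), including \(\th\to-\infty\). This needs comparison of the collision renewal with the Dickman-type renewal of \cite{CSZ19-Dickman} without restricting to the critical window. I would try to get it by bounding the geometric series \(\sum_k(\sigma^2)^k\P(\tau_k\le m)\), using \(\sigma^2 R_m=(1-\th(m,\beta)/(\pi R_m))^{-1}\)-type identities and the uniform local estimate \(\P(S_{2n}=0)\asymp 1/n\). This is presumably what \Cref{sec:second-moment} supplies.
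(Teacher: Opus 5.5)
Your proposal is correct and follows essentially the paper's route: the paper packages the first-collision decomposition \eqref{eq:first-collision} (applied to \(f\ast q_{N_{i-1}}\)), the local CLT bound \(q_j(f,f)\asymp 1/j\), monotonicity of \(M\mapsto \cV_M^{\beta}\) and the uniform point-to-plane estimate of \Cref{lem:var-point-to-plane} into \Cref{lem:var-strip}, and then does exactly your scale arithmetic with \(\cV(c\,\e^{\th})\asymp 1/(1+\abs{\th})\) for \(\th\le 0\). One wording fix: the decomposition is over the first \emph{marked} collision (the first point of the chaos expansion, with weight \(\sigma^2 q_n(f,f)\)), not the first actual meeting, and the post-collision factor is the point-to-plane second moment \(\EE[(Z_{N_i-n}^{\beta,\w})^2]\) itself, which is what your \(\overline U^\beta\) must denote for \(\sigma^2\overline U^\beta(m)\asymp\cV(c\,\e^{\th(m,\beta)})\) to hold.
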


\begin{proof}
  The proof relies on a general second moment estimate that we prove in \Cref{sec:second-moment} (together with~\Cref{prop:var-polymer}).
  \begin{lemma}
    \label{lem:var-strip}
    There is a constant \(c \in (0,1)\) such that, for any \(f\in \cM_1^{\rm disc}(R)\), for any \(b> a \geq 1+ R^2\), with \(b\geq 2a\) we have
    \[
      c \log\Bigl(\frac{b}{a}\Bigr) \, \cV\Bigl(  c\, \e^{\th(b,\beta)} \Bigr)
      \leq  \Var\bigl[Z_{a,b}^{\beta,\omega}(f)  \bigr]
      \leq \frac{1}{c} \log\Bigl(\frac{b}{a}\Bigr) \, \cV\Bigl(  c^{-1} \e^{\th(b,\beta)} \Bigr)\, \,.
    \]
  \end{lemma}
  Using \Cref{lem:var-strip} with \(a=N_{i-1}\), \(b=N_i\), and since \(\frac{c}{1+|\th(N_i,\beta)|} \leq \cV(c \e^{\th(N_i,\beta)}) \leq \frac{c'}{1+|\th(N_i,\beta)|}\)  by~\eqref{eq:asymp-V} (recall that we are in a regime where \(\th(N_i,\beta)\leq 0\)), we get that
  \[
    c \frac{\log\bigl(\frac{N_i}{N_{i-1}}\bigr)}{1+|\th(N_i,\beta)|}
    \leq \Var[Z_{N_{i-1},N_i}^{\beta,\w}(f)]
    \leq c' \frac{\log\bigl(\frac{N_i}{N_{i-1}}\bigr)}{1+|\th(N_i,\beta)|} \,.
  \]
  By the choice of the scales we have that \(\log (\frac{N_i}{N_{i-1}}) = 2^{-i} \alpha_0 \log N\) and also \(\th(N_i,\beta) = \th(N,\beta) + \log(\frac{N_i}{N}) +o(1)\), recall~\eqref{eq:th-asym}.
  Given our choice \(\th(N,\beta)\leq 0\) and \(\log(\frac{N_i}{N}) = - 2^{-i}\alpha_0 \log N\), we get that
  \[
    \frac{\log\bigl(\frac{N_i}{N_{i-1}}\bigr)}{1+|\th(N_i,\beta)|}
    = 	\frac{2^{-i} \alpha_0 \log N}{|\th(N,\beta)| + 2^{-i}\alpha_0 \log N +o(1)}\,.
  \]
  Since \(\alpha_0 2^{-i} \log N \geq 1+|\th(N,\beta)|\) for any \(1\leq i\leq k\) (see below~\eqref{def:k}), we get that this is bounded from above and from below by a constant, which concludes the proof.
\end{proof}

Recalling~\eqref{eq:tEE=Var}, \Cref{lem:var-Xj} directly gives that there is a constant \(c_0\) such that
\begin{equation}
  \label{inf-variance}
  \inf_{f\in \cM_1^{\rm disc}(R)} \Var[X(f)]
  \geq c_0 k \,.
\end{equation}

\subsubsection*{Size-biased variance estimates}

The main idea is to split \(\tVV_f[X(f)]\) as the sum of a \textit{diagonal} contribution and an \textit{off-diagonal} one: we write
\begin{equation}
  \label{eq:split-size-biased-var}
    \tVV_f[X(f)]
    =  \sum_{i=1}^{k} \tVV_f[X_{i}(f)] + 2 \sum_{1\leq i < j \leq k} \tC_f\bigl[X_{i},X_{j}\bigr] \,.
\end{equation}
The remainder of the proof consists of proving the following estimates.

\begin{lemma}[Diagonal terms]
  \label{lem:diag}
  There is a constant \(C_0'>0\) such that, for any \(1\leq i \leq k\)
  \[
    \sup_{f\in \cM_1^{\rm disc}(R)}\tVV_f[X_i(f)]
    \leq C_0' \,.
  \]
\end{lemma}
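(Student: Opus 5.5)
Bounding the size-biased variance by the size-biased second moment, and using the definition of \(\tPP_f\), we get
\[
  \tVV_f[X_i(f)] \le \tEE_f[X_i(f)^2] = \EE\bigl[Z_N^{\beta,\w}(f)\,X_i(f)^2\bigr].
\]
The first step is to condition on the disorder in the strip \((N_{i-1},N_i]\). The quantity \(X_i(f)\) depends only on this strip. The martingale/tower property (integrating out the disorder after time \(N_i\), and separately the disorder before \(N_{i-1}\)) shows that \(\EE[Z_N^{\beta,\w}(f)\mid \cF_{(N_{i-1},N_i]}]\) equals the partition function in which disorder is switched on only in that strip, i.e.\ \(Z_{N_{i-1},N_i}^{\beta,\w}(f)\). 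Indeed, for the product weight, conditionally averaging over the independent disorder outside the strip gives exactly the strip partition function. Hence
\[
  \tEE_f[X_i^2] = \EE\bigl[(1+X_i)X_i^2\bigr] = \EE[X_i^2]+\EE[X_i^3],
\]
and it suffices to bound the second and third moments of \(Z_{N_{i-1},N_i}^{\beta,\w}(f)-1\) by a constant. The second moment is at most \(1/c_0\) by \Cref{lem:var-Xj}. For the third moment, \(\EE[X_i^3]\le \EE[Z_{N_{i-1},N_i}^{\beta,\w}(f)^3]\), so we need a uniform bound on the third moment of the strip partition function.

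The main step is this third-moment estimate. Expanding the cube gives the expectation over three independent walks started from \(f\) of \(\exp\) of pairwise collision local times in \((N_{i-1},N_i]\), with weights \(\e^{\lambda(2\beta)-2\lambda(\beta)}=1+\sigma^2(\beta)\) per pairwise collision (plus a triple-collision correction, which is harmless for \(\beta<1\)). I would invoke the known third (and higher) moment bounds for the critical 2D polymer, \cite{CSZ19-3rd} and the uniform estimates of \cite{LZ26}/\cite{GQT21}. These state that, in a regime where the second moment is bounded and disorder is off before time \(a\) (so the initial law \(f\ast q_{N_{i-1}}\) is spread on scale \(\sqrt{N_{i-1}}\ge R\)), the \(h\)-th moment is bounded by a function of the second-moment quantities alone. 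Concretely, the relevant parameters are \(\log(N_i/N_{i-1})=2^{-i}\alpha_0\log N\) and \(\th(N_i,\beta)\le0\). Here \(\log(N_i/N_{i-1})/(1+|\th(N_i,\beta)|)\) is bounded, exactly as in the proof of \Cref{lem:var-Xj}, and this is precisely the regime in which the two-body operator has norm bounded away from \(1\) in the sense needed for the third moment.

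If one wants a self-contained route, I would write the third moment through the chaos expansion as a sum over sequences of pairwise collisions alternating among the pairs \(\{1,2\},\{1,3\},\{2,3\}\). Each maximal run with a fixed pair is bounded by the two-point renewal kernel \(\sigma^2 U\), whose total mass over the strip is what \Cref{lem:var-strip} controls. Each switch between pairs costs a heat-kernel factor that, in 2D, contributes a factor of order \(1/\log\) of the time ratio. Summing the resulting geometric series is the expected obstacle. To make it converge, I would use the smoothing \(f\mapsto f\ast q_{N_{i-1}}\), which makes the initial overlap of order \(1/N_{i-1}\), together with \(N_i-N_{i-1}\ge\frac12 N_i\) and the boundedness of the ratio above. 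This should give \(\sup_f\EE[Z_{N_{i-1},N_i}^{\beta,\w}(f)^3]\le C\), hence \(\tVV_f[X_i(f)]\le C_0'\).
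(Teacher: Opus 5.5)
Your reduction is correct and matches the paper's. Conditioning on the disorder in \((N_{i-1},N_i]\) gives \(\tEE_f[X_i(f)^2]=\EE[Z_i\,X_i(f)^2]\) with \(Z_i\coloneqq Z^{\beta,\w}_{N_{i-1},N_i}(f)\). The paper computes exactly \(\tVV_f[X_i(f)]=\EE[Z_i^3]-\EE[Z_i^2]^2\); your bound \(\EE[X_i^2]+\EE[X_i^3]\) serves equally well. Either way, everything rests on a bound on \(\EE[Z_i^3]\) that is uniform in \(f\in\cM_1^{\rm disc}(R)\), \(\beta\), \(N\), \(R\) and \(i\).

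That bound is the whole content of the lemma, and your proposal does not supply it. The works you invoke, \cite{CSZ19-3rd}, \cite{GQT21} and \cite{LZ26}, are results at fixed \(\th\): the critical window, and for \cite{LZ26} the limit \(r\downarrow 0\) of the SHF at fixed \(t,\th\). Their constants depend on \(\th\) and on the time horizon. None of them states that the third moment is controlled by the second moment uniformly in the parameters. Here that uniformity is essential. Along the strips one can have \(\th(N_i,\beta)\to-\infty\) and \(\log(N_i/N_{i-1})\to\infty\) simultaneously, with only their ratio bounded. This is a joint quasi-critical/small-ball regime that no fixed-\(\th\) statement covers. Moreover, boundedness of that ratio is just boundedness of the second moment; by itself it gives no contraction for the three-replica expansion, contrary to your claim that the two-body operator then has norm bounded away from \(1\). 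The paper fills exactly this step with the hypercontractivity estimate \cite[Thm.~1.11]{CSZ25}: higher moments of \(Z^{\beta,\w}_M(g)\) are bounded by powers of its second moment whenever the latter is bounded. It applies this to \(g=f\ast q_{N_{i-1}}\), via \(Z_i\stackrel{d}{=}Z^{\beta,\w}_{N_i-N_{i-1}}(f\ast q_{N_{i-1}})\), and combines it with \Cref{lem:var-Xj}.

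Your self-contained fallback does not close the gap either. In the expansion over alternating pair-runs, a switch between pairs does not carry a small factor of order \(1/\log\). The time-integrated switching heat kernel is logarithmic, which offsets the \(1/\log\)-size coupling \(\sigma^2(\beta)\), so bookkeeping alone gives no per-switch contraction. In \cite{CSZ19-3rd,GQT21} the series over switches is controlled only by delicate weighted operator-norm estimates after a Laplace tilt with a large parameter. That costs an exponential factor and yields constants depending on \(\th\) and on the horizon, which is again non-uniform. The smoothing \(f\mapsto f\ast q_{N_{i-1}}\) and the bound \(N_i-N_{i-1}\ge\frac12 N_i\) only control the first collision (the initial overlap), not the ratio of that series. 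So \emph{summing the geometric series}, which you identify as the obstacle, remains unproved.
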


\begin{lemma}[Off-diagonal terms]
  \label{lem:off-diag}
  There is a constant \(C_0''>0\) such that, for any \(1\leq i< j \leq k \), we have
  \[
    \sup_{f\in \cM_1^{\rm disc}(R)} \tC_f\bigl[X_{i}(f),X_{j}(f)\bigr]
    \leq C_0''  \frac{N_{i}}{N_{j-1}}\,.
  \]
\end{lemma}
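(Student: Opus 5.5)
The plan is to reduce the size-biased covariance to an explicit \emph{three-replica} computation, and then show that the only coupling between strip \(i\) and strip \(j\) comes from the common replica \(S\). Along \(S\), the influence of the position at time \(N_i\) is forgotten after the diffusive time \(N_{j-1}-N_i\ge \frac12 N_{j-1}\).

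\emph{Step 1: reduction to the two strips.} Write \(I_i=(N_{i-1},N_i]\) and \(I_j=(N_{j-1},N_j]\), and let \(\cF_{I}\) be the \(\sigma\)-field of the disorder in \(I_i\cup I_j\). Since \(X_i(f)X_j(f)\) is \(\cF_I\)-measurable and the disorder outside \(I\) averages to \(1\), we have \(\EE[Z_N^{\beta,\w}(f)\mid\cF_I]=Z_I(f)\), the partition function with disorder switched off outside \(I_i\cup I_j\). Write \(e^{H_i(S)}\) for the weight \(\exp(\sum_{n\in I_i}(\beta\w_{n,S_n}-\lambda(\beta)))\). Then
\[
Z_I(f)-1=X_i(f)+X_j(f)+W,\qquad W\coloneqq\E_f\bigl[(e^{H_i(S)}-1)(e^{H_j(S)}-1)\bigr].
\]
By independence and centering of \(X_i,X_j\), the terms \(\EE[X_i^2X_j]\), \(\EE[X_iX_j^2]\) and \(\EE[X_iX_j]\) all vanish. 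Using \eqref{eq:tEE=Var}, this gives
\[
\tC_f[X_i,X_j]=\EE[W X_iX_j]-\EE[X_i^2]\,\EE[X_j^2].
\]

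\emph{Step 2: replica representation.} Write \(X_i=\E_f[e^{H_i(S^1)}]-1\) and \(X_j=\E_f[e^{H_j(S^2)}]-1\), with \(S,S^1,S^2\) independent walks started from \(f\). Let \(L_i(S,S')\) be the number of collisions of \(S,S'\) during \(I_i\). Integrating out the disorder strip by strip gives
\[
\EE[WX_iX_j]=\E_f^{\otimes3}\Bigl[\bigl((1+\sigma^2)^{L_i(S,S^1)}-1\bigr)\bigl((1+\sigma^2)^{L_j(S,S^2)}-1\bigr)\Bigr].
\]
The product \(\EE[X_i^2]\EE[X_j^2]\) is the same expression with \(S\) in the second factor replaced by an independent copy \(S'\). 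Both factors are nonnegative. Condition on \((S_n,S^1_n)_{n\le N_i}\), and set
\[
g(x)\coloneqq \E\bigl[(1+\sigma^2)^{L_j(S,S^2)}-1 \bigm| S_{N_i}=x\bigr].
\]
Then the covariance equals \(\E_f^{\otimes2}[A_i\,(g(S_{N_i})-\bar g)]\), where \(A_i=(1+\sigma^2)^{L_i(S,S^1)}-1\ge0\) and \(\bar g=\E_f[g(S'_{N_i})]=\Var[X_j]\). Since \(\E_f^{\otimes2}[A_i]=\Var[X_i]\le 1/c_0\) by \Cref{lem:var-Xj}, it suffices to prove
\[
g(x)\le \Bigl(1+C\tfrac{N_i+|x|^2}{N_{j-1}}\Bigr)\bar g + (\text{Gaussian tail in }|x|/\sqrt{N_{j-1}}),
\]
and then to control the correlation of \(A_i\) with \(|S_{N_i}|^2\). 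The typical size is \(|S_{N_i}|\le R+O(\sqrt{N_i})\lesssim\sqrt{N_i}\), because \(R^2\le N_0\le N_i\). For the correlation, I would use Cauchy--Schwarz together with a second-moment bound on \(A_i\). That bound follows from the same estimates as \Cref{lem:var-strip}, or, more carefully, from a fourth-moment bound on \(|S_{N_i}|\) under the tilt by \(A_i\), which I expect to follow from the renewal structure of the collision local time.

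\emph{Step 3: the main obstacle, a uniform comparison of \(g\).} The function \(g(x)\) depends on \(x\) only through the law of \(S_{N_{j-1}}\), which is \(q_{N_{j-1}-N_i}(\cdot-x)\). The law of \(S'_{N_{j-1}}\), which gives \(\bar g\), is \(q_{N_{j-1}-N_i}\ast(f\ast q_{N_i})\). The remaining dependence is on the starting pair \((S_{N_{j-1}},S^2_{N_{j-1}})\) through a second-moment functional of the strip \(I_j\) started from two points. I would compare the two heat kernels via the local CLT, \(q_n(z-x)/q_n(z-y)\le \exp(C(|x|^2+|y|^2+|z||x-y|)/n)\). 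The first-order term in \(x\) cancels after averaging over the symmetric fluctuations of \(S\), so only \(O(N_i/N_{j-1})\) survives. The region of large \(|z|\) is handled by bounding the strip-\(j\) functional uniformly in the starting points: its supremum over starting pairs is still \(O(1)\), since the relative displacement only decreases collisions. This uniform two-point second-moment control is the delicate part. I would derive it from the upper bound of \Cref{lem:var-strip}, applied with the initial distribution replaced by a point-pair, which is where \(b=N_j\ge 2a=2N_{j-1}\) is used. Combining the three steps yields \(\tC_f[X_i,X_j]\le C_0''\,N_i/N_{j-1}\), uniformly in \(f\in\cM_1^{\rm disc}(R)\).
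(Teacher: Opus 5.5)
\paragraph{There is a genuine gap in Step~3.}
Your Steps 1--2 are correct. The identity $\tC_f[X_i,X_j]=\E_f^{\otimes 2}\bigl[A_i\,(g(S_{N_i})-\bar g)\bigr]$ with $A_i\ge 0$ is the replica form of the paper's chaos computation. The paper conditions on the last point $(b_1,y_1)$ of the strip-$i$ chaos instead of on $S_{N_i}$, which makes no difference. Both ingredients you rely on in Step~3 are false.

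First, $\E_f^{\otimes 2}[A_i^2]$ is not $O(1)$. Since $A_i^2\le(1+\sigma^2)^{2L_i}=(1+2\sigma^2+\sigma^4)^{L_i}$, up to bounded terms it is the second moment of $Z_{N_{i-1},N_i}(f)$ at a strength $\beta'$ with $\sigma^2(\beta')=2\sigma^2(\beta)+\sigma^4(\beta)$. This lies far outside the critical window where \Cref{lem:var-strip} gives bounded quantities. For instance, with $R=0$, $\th(N,\beta)=0$ and $i=2$, one has $\th(N_2,\beta')\approx\frac14\log N$, so by \Cref{lem:var-strip} this moment is at least of order $\exp(cN^{1/4})$. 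The Cauchy--Schwarz route therefore collapses, and your alternative (a tilted fourth moment of $S_{N_i}$) is only conjectured.

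Second, the strip-$j$ functional started from a pair of points at time $N_{j-1}$ is not uniformly bounded. At coinciding points it equals $\EE[(Z^{\beta,\w}_{N_j-N_{j-1}})^2]-1$. For $j$ near $k$ this is of order $\log N/(1+\abs{\th(N,\beta)})$, which is unbounded whenever $k\to\infty$. \Cref{lem:var-strip} does not apply here, because it needs a diffusion time $a\ge 1+R^2$ before the strip, whereas here the strip starts immediately. Only the functional with one replica pinned and the other diffused from $f$ is bounded.

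\paragraph{The missing idea.}
You need no control on $S_{N_i}$ under the tilt, because $g(x)-\bar g$ has an upper bound that is uniform in $x$. By the first-collision decomposition,
\[
  g(x)=\sum_{a=N_{j-1}+1}^{N_j}\sigma^2(\beta)\,\EE\bigl[(Z^{\beta,\w}_{N_j-a})^2\bigr]\,q^{(f)}_{2a-N_i}(x)\,,\qquad
  \bar g=\sum_{a=N_{j-1}+1}^{N_j}\sigma^2(\beta)\,\EE\bigl[(Z^{\beta,\w}_{N_j-a})^2\bigr]\,q_{2a}(f,f)\,,
\]
with $q_{2a}(f,f)=\sum_{x,x'}f(x)f(x')q_{2a}(x-x')$. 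Since $a>N_{j-1}\ge N_i\ge 1+R^2$, the local CLT gives $\sup_x q^{(f)}_{2a-N_i}(x)-q_{2a}(f,f)\le CN_i/a^2$. Indeed, the peak of the kernel at time $2a-N_i$ exceeds that at time $2a$ by $O(N_i/a^2)$, and the spread of $f$ costs $O(R^2/a^2)$. Combined with $q_{2a}(f,f)\ge c/a$, this is \cite[Eq.~(6.4)]{BCT25}, which the paper uses with $b_1\le N_i$ in place of $N_i$.

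Hence $g(x)\le\bigl(1+C\frac{N_i}{N_{j-1}}\bigr)\bar g$ for every $x$. Since $A_i\ge 0$, you immediately get $\tC_f[X_i,X_j]\le C\frac{N_i}{N_{j-1}}\Var[X_i]\Var[X_j]$, and \Cref{lem:var-Xj} concludes. Your position-dependent bound with the $\abs{x}^2$ term, the Gaussian tail, and the correlation of $A_i$ with $\abs{S_{N_i}}^2$ all become unnecessary.
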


We can now estimate the size-biased variance.
The sum of the diagonal terms in~\eqref{eq:split-size-biased-var} is clearly bounded by \(C_0' k\) thanks to \Cref{lem:diag}.
For the off-diagonal terms in \eqref{eq:split-size-biased-var}, recalling the definition~\eqref{def:scales} of \(N_i\), we notice that
\[
  \frac{N_{i}}{N_{j-1}}
  = N^{-\alpha_0 (2^{-i} - 2^{-(j-1)})}
  \leq
  \begin{cases}
    1                     & \quad \text{ if } j
    =i+1  \,                                    \\
    N^{-\frac12 \alpha_0 2^{-i}}
    \leq \e^{- 2^{k-i-1}} & \quad \text{ if } j
    \geq i+2\,,
  \end{cases}
\]
where we have used that \(\alpha_0 2^{-k} > \frac{1+\abs{\th(N,\beta)}}{\log N} \geq \frac{1}{\log N}\) so that \(N^{\alpha_0 2^{-k}} \geq \e\).
Thus, the sum of the off-diagonal terms is bounded by \(C_0''\) times
\[
  \sum_{i =1}^{k-1}  1 + \sum_{i =1}^{k-2}  \sum_{j=i+2}^k \e^{- 2^{k-i}}
  \leq k \Bigl(1+ \sum_{i =1}^{k-2}  \e^{- 2^{k-i-1}} \Bigr)
  \leq 2 k \,.
\]
All together, we obtain that there is a constant \(C_0>0\) such that 
\begin{equation}
  \label{sup-size-biased-variance}
  \sup_{f\in \cM_1^{\rm disc}(R)} \tVV_f[X(f)] \leq C_0 k \,.
\end{equation}

\subsubsection*{Conclusion of the proof}

Plugging the bounds~\eqref{inf-variance} and~\eqref{sup-size-biased-variance} into \eqref{eq:apply-change-measure}, we obtain that
\[
  \sup_{f\in\cM_1^{\rm disc}(R)} \EE\bigl[Z_N^{\beta,\w}(f)^{1/2}\bigr]
  \leq \frac{C}{\sqrt{1+k}}
\]
with \(C\coloneqq\max\bigl\{1,\frac{\sqrt{2(c_0+C_0)}}{c_0}\bigr\}\).
This concludes the proof of \Cref{prop:key}.
\qed

\subsection{Proof of \texorpdfstring{\Cref{lem:diag,lem:off-diag}}{}}

We now prove the two main estimates on the diagonal part and the off-diagonal part of the sized biased variance; the second part is the most delicate one.

\begin{proof}[Proof of \Cref{lem:diag}]
  The proof essentially follows from \Cref{lem:var-Xj} and the hypercontractivity property of \cite[Thm.~1.11]{CSZ25}.
  First of all, writing \(Z_i(f)\coloneqq X_i(f)+1 = Z_{N_{i-1},N_i}^{\beta,\w}(f)\) for simplicity, notice that a simple calculation gives (recall \(\EE[Z_i(f)]=1\))
  \[
    \tVV_f[X_i(f)]
    = \EE[ Z_i(f) X_i(f)^2 ] - \EE[Z_i(f)X_i(f)]^2
    = \EE[Z_i(f)^3] - \EE[Z_i(f)^2]^2
    \leq \EE[Z_i(f)^3]\,.
  \]
  Recall also that \(Z_i(f) \stackrel{d}{=} Z_{N_i-N_{i-1}}^{\beta,\w}(f_{i-1})\) with \(f_{i-1} \coloneqq f\ast q_{N_{i-1}}\).
  Then \cite[Thm.~1.11]{CSZ25} says that whenever the distribution \(f\) is such that the second moment of \(Z_N(f)\) is bounded, then its higher moments are bounded by suitable powers of its second moment.
  In combination with  \Cref{lem:var-Xj}, this implies that there exists a uniform constant \(\mathfrak{C}_3\) such that
  \[
    \EE[Z_i(f)^3]
    \leq \mathfrak{C}_3 \,\EE[Z_i(f)^2]^{3/2}
    \leq \mathfrak{C}_3 \, (c_0)^{-3/2} \,.
  \]
  This holds uniformly for \(f\in \cM_1^{\rm disc}(R)\), which concludes the proof of \Cref{lem:diag}.
\end{proof}

Before we turn to the proof of \Cref{lem:off-diag}, let us introduce the so-called chaos expansion of the partition function.
One may write
\[
  \exp\Bigl(\sum_{n=1}^{N} (\beta \omega(n,S_n) -\lambda(\beta))\Bigr)
  = \prod_{n=1}^{N} \prod_{z\in \Z^2} \bigl(1+\xi(n,z) \indic_{\{S_n=z\}} \bigr) \,,
\]
with \(\xi(n,z) \coloneqq e^{\beta \omega(n,z) -\lambda(\beta)}-1\).
Then, performing a binomial expansion of the product, we obtain that the directed polymer partition function~\eqref{eq:polymeas} admits the following chaos expansion
\[
  Z_{N}^{\beta,\w}(f)
  =	\sum_{k\geq 0} \sumtwo{1\leq n_1<\cdots<n_k \leq N}{x_0,x_1,\ldots,x_k \,\in \,\Z^2} f(x_0) \prod_{i=1}^{k} q_{n_i-n_{i-1}}(x_i-x_{i-1}) \prod_{i=1}^k \xi(n_i,x_i),
\]
where \(q_n(z) \coloneqq \P(S_n=z)\) is the two-dimensional simple random walk transition probability; by convention \(n_0=0\).
To shorten these types of expressions, we introduce and use the following notation: for a multi-index set \(A\coloneqq \{(n_1,x_1),\ldots,(n_{j},x_{j})\} \in \bigl( \N \times \Z^2\bigr)^{j}\) with \(n_1<\cdots <n_j\), we denote
\begin{align*}
  q^{(f)}(A)
  \coloneqq  \sum_{x_0\in \Z^2} f(x_0) \prod_{i =1}^{j} q_{n_i-n_{i-1}}(x_i-x_{i-1}) \qquad \text{and} \qquad \xi(A)
  \coloneqq \prod_{i =1}^{j} \xi(n_i,x_i) \,.
\end{align*}
We also set by convention \(q^{(f)}(\emptyset)=1\), \(\xi(\emptyset) =1\) and \(q^{(f)}(A)=0\) if \(A\subset \N\times \Z^2\) contains two elements with the same time-index.
Then, for \(f:\Z^2\to \R_+\) the above chaos expansion can be rewritten as
\begin{align}
  \label{def:multi-chaos}
  Z_{N}^{\beta,\w}(f)
  =	\sum_{A\subset \llb 1,N\rrb \times \Z^2} q^{(f)}(A) \, \xi(A) \,.
\end{align}
Similarly, recalling the notation~\eqref{eq:partition-function-strip}, for \(a<b\) we can write
\[
  Z_{a,b}^{\beta,\w}(f)
  = \sum_{A\subset \llb a+1,b\rrb \times \Z^2} q^{(f)}(A) \, \xi(A)
\]
and also
\begin{equation}
  \label{eq:chaos-Xi}
  X_i(f)
  \coloneqq \sum_{A \in \cI_i} q^{(f)}(A) \xi(A) \,,\quad \text{ with } \quad  \cI_i
  \coloneqq \bigl\{ A\subset \llb N_{i-1}+1,N_{i}\rrb \times \Z^2, A\neq\emptyset \bigr\} \,.
\end{equation}

Finally, let us stress that since the \(\xi(n,x)\) are centered and independent, the variables \(\xi(A)\) are orthogonal in \(L^2(\PP)\).
Note also that a simple calculation gives that \(\EE[\xi(n,x)^2] = \e^{\lambda(2\beta)-2\lambda(\beta)}-1 \eqqcolon \sigma^2(\beta)\) so we have that
\begin{equation}
  \label{eq:orthogonality}
  \EE[\xi(A)\xi(B)]
  = \sigma^2(\beta)^{\abs{A}}\, \indic_{\{A=B\}} \,.
\end{equation}
In particular, using the chaos decomposition, we get that 
\begin{equation*}
  \EE\bigl[Z_N^{\beta,\omega}(f)^2\bigr]
    = \sum_{A\subset \llb 1,N\rrb \times \Z^2} \sigma^2(\beta)^{|A|}q^{(f)}(A)^2 \,.
\end{equation*}
Notice also that, of \(A = \{(n_1,x_1), \ldots, (n_j,x_j)\}\) with \(n_1<\cdots <n_j\), we get that if \(j\geq 1\),
\[
q^{(f)}(A) = q^{(f)}_{n_1}(x_1) q(\hat{A}) \qquad \text{ with } \hat{A} \coloneqq \{(n_2-n_1,x_2-x_1), \ldots, (n_j-n_1,x_j-x_1)\} \,,
\]
where we have used the notation \(q^{(f)}_{n_1}(x_1) = \sum_{x_0\in \Z^2} f(x_0)q_{n_1}(x_1-x_0)\).
Thus, decomposing over the first point in \(A\) (for non-empty~\(A\)), we get, for any \(f\) a probability distribution:

\[
\EE\bigl[Z_N^{\beta,\omega}(f)^2\bigr] = 1+ \sum_{n_1=1}^{N} \sum_{x_1 \in \Z^2} \sigma^2(\beta)^{|A|}q^{(f)}_{n_1}(x_1) q^{(f)}_{n_1}(x_1) \sum_{\hat{A} \subset \llb 1, N-n_1 \rrb \times \Z^2} q(\hat{A})^2 \,.
\]
This leads to the following ``first collision decomposition'' 
\begin{equation}
  \label{eq:first-collision}
  \begin{split}
    \EE\bigl[Z_N^{\beta,\omega}(f)^2\bigr]
    =& 1+\sum_{j=1}^N q_j(f,f) \sigma^2(\beta) \EE\bigl[(Z_{N-j}^{\beta,\omega})^2\bigr]\,,\\
    \text{ with }\ q_j(f,f) & \coloneqq \sum_{x,y \in \Z^2} f(x)f(y)q_{2j}(x-y) \,,
  \end{split}
\end{equation}
where we have used that \(\EE\bigl[(Z_{m}^{\beta,\omega})^2\bigr] = \sum_{A\subset \llb 1,m\rrb \times \Z^2} \sigma^2(\beta)^{|\hat{A}|} q(\hat{A})^2\) and the Chapman--Kolmogorov property to see that \(\sum_{x_1\in \Z^2} q_j(x_1-x) q_j(x_1-y) = q_{2j}(x-y)\).

\begin{proof}[Proof of \Cref{lem:off-diag}]
  The idea is to use the chaos expansion and orthogonality to write the covariance in terms of collision diagrams.
  Using the chaos expansion~\eqref{eq:chaos-Xi}, we obtain
  \[
    \tC_f \bigl[X_{i}(f),X_{j}(f)\bigr]
    = \sum_{A_1\in \cI_{i}, A_2\in \cI_{j}} q^{(f)}(A_1) q^{(f)}(A_2) \tC_f \bigl[\xi(A_1),\xi(A_2)\bigr] \,.
  \]
  Now, given the chaos expansion~\eqref{def:multi-chaos} of \(Z_{N}^{\beta,\w}(f)\) and by the orthogonality relation~\eqref{eq:orthogonality}, we have that
  \[
    \tEE_f[\xi(A)]
    = \EE[ Z_{N}^{\beta,\w}(f)\xi(A)]
    = \sigma^2(\beta)^{\abs{A}} q^{(f)}(A) .
  \]
  Since the time intervals of \(\cI_{i}\) and \(\cI_{j}\) are disjoint we have \(\xi(A_1)\xi(A_2) = \xi(A_1\cup A_2)\) and we hence obtain that
  \[
    \tC_f \bigl[\xi(A_1),\xi(A_2)\bigr]
    = \sigma^2(\beta)^{\abs{A_1}+\abs{A_2}}  \bigl(q^{(f)}(A_1\cup A_2) - q^{(f)}(A_1) q^{(f)}(A_2) \bigr) \,.
  \]

  Let us denote by \((a_i,x_i)\) and \((b_i,y_i)\) the first and last point of \(A_i\) for \(i=1,2\).
  Then, notice that we can write
  \[
    q^{(f)}(A_1\cup A_2)
    = q^{(f)}(A_1) \,q_{a_2-b_1}(x_2-y_1) \,q(\hat A_2) \quad \text{ and }\quad q^{(f)}(A_2)
    = q_{a_2}^{(f)}(x_2) \, q( \hat A_2) \,,
  \]
  where if \(A_2 = \{ (m_1,z_1), \ldots, (m_j,z_j) \}\) with \(m_1<\cdots <m_j\) (and with \((a_2,x_2)= (m_1,z_1)\)) we have denoted 
  \[
  q(\hat{A}_2) = \prod_{i=2}^j q_{m_i-m_{i-1}}(z_i-z_{i-1}) \,.
  \]
  In other words \(q(\hat{A}_2)\) is the kernel associated with the set \(A_2\) translated by its first point with this point being removed, namely the set \(\hat A_2 = \{(m_2-m_1,z_2-z_1), \ldots, (m_{j}-m_1,z_j-z_1) \}\).
  We refer to Figure~\ref{fig:kernels} for an illustration.
  All together, we have 
  \begin{equation}
    \label{eq:difference-covariance}
    \begin{split}
      q^{(f)}(A_1)q^{(f)}(A_2) \, & \Bigl( q^{(f)}(A_1\cup A_2)-q^{(f)}(A_1)q^{(f)}(A_2)\Bigr) \\
    &= q^{(f)}(A_1)^2 \, q_{a_2}^{(f)}(x_2) \,\Bigl( q_{a_2-b_1}(x_2-y_1)-q_{a_2}^{(f)}(x_2) \Bigr) \, q(\hat A_2)^2 \,.
    \end{split}
  \end{equation}

  \begin{figure}
  \centering
  \begin{tikzpicture}[scale=1, line cap=round, line join=round]
      \tikzstyle{blackpath}=[thick, blue!40!black]
      \tikzstyle{greenpath}=[thick, green!40!black]
      \tikzstyle{dot}=[fill=black, circle, inner sep=2.2pt]
      \coordinate (Start) at (0, 2);     
      \coordinate (P1) at (3.5, 3.6);
      \coordinate (P2) at (4, 3.8);
      \coordinate (P3) at (4.5, 3.6);
      \coordinate (P4) at (5, 3.7);
      \coordinate (P5) at (5.5, 3.4);
      \coordinate (P6) at (8.5, 1.9);
      \coordinate (P7) at (9, 2.3);
      \coordinate (P8) at (9.5, 2.4);
      \coordinate (P9) at (10, 1.9);
      \coordinate (P10) at (10.5, 2.3);
      \coordinate (P11) at (11, 2.1);
      \draw[blackpath] (Start) to[out=10, in=190] (P1);
      \draw[blackpath] (P1) -- (P2) -- (P3) -- (P4) -- (P5);
      \draw[blackpath] (P5) to[out=-10, in=170] (P6);
      \draw[blackpath] (P6) -- (P7) -- (P8) -- (P9) -- (P10) -- (P11);
      \coordinate (G1_Start) at (0, 1.6);
      \coordinate (G1_P1) at (3.5, 3.52);
      \coordinate (G1_P2) at (4, 3.72);
      \coordinate (G1_P3) at (4.5, 3.52);
      \coordinate (G1_P4) at (5, 3.62);
      \coordinate (G1_P5) at (5.5, 3.32);
      \draw[greenpath] (G1_Start) to[out=0, in=210] (G1_P1);
      \draw[greenpath] (G1_P1) -- (G1_P2) -- (G1_P3) -- (G1_P4) -- (G1_P5);
      \coordinate (G2_Start) at (0, 1.4);
      \coordinate (G2_P6) at (8.5, 1.82);
      \coordinate (G2_P7) at (9, 2.22);
      \coordinate (G2_P8) at (9.5, 2.32);
      \coordinate (G2_P9) at (10, 1.82);
      \coordinate (G2_P10) at (10.5, 2.22);
      \coordinate (G2_P11) at (11, 2.02);
      \draw[greenpath] (G2_Start) to[out=-10, in=170] (G2_P6);
      \draw[greenpath] (G2_P6) -- (G2_P7) -- (G2_P8) -- (G2_P9) -- (G2_P10) -- (G2_P11);
      \foreach \p in {P1, P2, P3, P4, P5, P6, P7, P8, P9, P10, P11} {
          \node[dot] at (\p) {};
      }
      \node[above left=0.1cm, font=\small] at (P2) {$A_1$};
      \node[above right=-0.05cm, font=\small] at (P5) {$(b_1, y_1)$};
      \node[below=0.01cm, font=\small] at (P6) {$(a_2, x_2)$};
      \node[below right=0.05cm, font=\small] at (P9) {$A_2$};
  \end{tikzpicture}
  \caption{Illustration of the formula~\eqref{eq:difference-covariance}. 
  The kernel \(q^{(f)}(A_1\cup A_2)\) is represented as a blue line connecting the dots (the elements of \(A_1\cup A_2\)), each line corresponding to a random walk transition kernel \(q_{n-m}(y-x)\).
  The kernels \(q^{(f)}(A_1)\) and \(q^{(f)}(A_2)\) are represented by green lines.
  The figure illustrates how the formula~\eqref{eq:difference-covariance} factorizes.}
  \label{fig:kernels}
  \end{figure}

  Summing over \(\hat A_2\) (with \((a_2,x_2)\) fixed), we have that
  \[
    \sum_{\hat A_2 \subset \llb a_2+1,N_{j}\rrb \times \Z^2} \sigma^2(\beta)^{\abs{\hat A_2}} q(\hat A_2)^2
    = 	\EE\bigl[(Z_{N_{j}-a_2}^{\beta,\w})^2\bigr] \,,
  \]
  so that \(\tC_f \bigl[X_{i}(f),X_{j}(f)\bigr]\) is equal to
  \[
    \sum_{A_1 \in \cI_{i}} \sigma^2(\beta)^{\abs{A_1}}  q^{(f)}(A_1)^2 \!\! \sum_{a_2 =N_{j-1}+1}^{N_j} \!\! \sigma^2(\beta) \EE\bigl[(Z_{N_{j}-a_2}^{\beta,\w})^2\bigr]  \biggl(\sum_{ x_2 \in \Z^2 } q_{a_2}^{(f)}(x_2) \bigl(q_{a_2-b_1}(x_2-y_1)-q_{a_2}^{(f)}(x_2)\bigr) \biggr) \,.
  \]
  where we recall that \((b_1,y_1)\) is the last point of \(A_1\).
  Now, using the Chapman--Kolmogorov property, we get that
  \[
    \sum_{ x_2 \in \Z^2 } q_{a_2}^{(f)}(x_2) \bigl(q_{a_2-b_1}(x_2-y_1)-q_{a_2}^{(f)}(x_2)\bigr)
    = q^{(f)}_{2a_2-b_1}(y_1) - q_{2a_2}(f,f) \,,
  \]
  where \(q_{m}(f,f) \coloneqq \sum_{x,x'\in \Z^2} f(x) q_m(x'-x) f(x')\).
  Then, we can use directly \cite[Eq.~(6.4)]{BCT25}, which shows that, uniformly in \(y_1\), we have
  \[
    q^{(f)}_{2a_2-b_1}(y_1) - q_{2a_2}(f,f)
    \leq C \frac{b_1}{(a_2)^2}
    \leq C' \frac{N_{i}}{N_{j-1}} q_{2a_2}(f,f)
  \]
  where for the second inequality we have used that \(b_1\leq N_{i}\), that \(a_2 \geq N_{j-1}\) and that \(q_{2a_2}(f,f) \geq \frac{c}{a_2}\) by the local limit theorem (using that \(f\in \cM_1^{\rm disc}(R)\) and \(a_2 \geq N_0 =1+R^2\)).

  All together, we have that there is a constant \(C>0\) such that
  \[
    \begin{split}
      \tC_f \bigl[X_{i}(f),X_{j}(f)\bigr]
       & \leq C' \frac{N_{i}}{N_{j-1}} \sum_{A_1 \in \cI_{i}} \sigma^2(\beta)^{\abs{A_1}}  q^{(f)}(A_1)^2 \sum_{a_2 =N_{j-1}+1}^{N_j} q_{2a_2}(f,f)\sigma^2(\beta) \EE\bigl[(Z_{N_{j}-a_2}^{\beta,\w})^2\bigr] \\
       & = C' \frac{N_{i}}{N_{j-1}} \, \Var[Z_{N_{i-1},N_i}^{\beta,\w}(f)] \, \Var[Z_{N_{j-1},N_j}^{\beta,\w}(f)] \,.
    \end{split}
  \]
  For the last identity, we have noticed that the sum over \(a_2\) is exactly the first collision decomposition of \(\Var[Z_{N_{j-1},N_j}^{\beta,\w}(f)]\), see~\eqref{eq:first-collision}, and that the sum over \(A_1\) is precisely \(\Var[Z_{N_{i-1},N_i}^{\beta,\w}(f)]\), recall~\eqref{eq:chaos-Xi}.
  Then, using \Cref{lem:var-Xj}, we conclude the proof of \Cref{lem:off-diag}.
\end{proof}

\section{The coarse-graining estimate}
\label{sec:coarse}

In this section, we prove \Cref{prop:coarse}.
The proof is rather standard e.g. \cite[App.~A]{BCT25}, but the coarse-graining is here inhomogeneous.
Recall that we fix some scales \(M_0\coloneqq 1+R^2 <M_1<\cdots <M_{\ell} \leq N\); for simplicity of notation, we assume that \(\sqrt{M_i}\) are integers.

For every \(0\leq i\leq \ell\), we partition \(\Z^2\) into \(L^{\infty}\) balls of radius \(\sqrt{M_i}\).
More precisely, for \(y\in \Z^2\), let us denote by \(B_i(y) \coloneqq B(2y\sqrt{M_i}, \sqrt{M_i})\) the \(L^{\infty}\) ball centered at \(2y\sqrt{M_i}\).

Then, for any ``skeleton'' \(\cY_{\ell} = (y_i)_{0\leq i \leq \ell} \in (\Z^2)^{\ell+1}\), we define the coarse-grained partition function starting from \(f\in \cM_1^{\disc}(R)\) and with skeleton~\(\cY_{\ell}\), \textit{i.e.}\ visiting the \(B_i(y_i)\) at time \(M_i\):
\[
  Z_{M_{\ell}}^{\beta,\w}(f ; \cY_{\ell})
  = \sum_{x_0 \in B_0(y_0)} \ \sum_{x_1 \in B_1(y_1)} \cdots \sum_{x_\ell \in B_{\ell}(y_\ell)}  Z_{0,M_0}^{\beta,\omega}(f,x) \prod_{i=1}^{\ell} Z_{M_{i-1},M_i}^{\beta,\omega}(x_{i-1},x_i) \,,
\]
where \(Z_{a,b}^{\beta,\w} (x,x')\) is the point-to-point partition function from \((a,x)\) to \((b,x')\), namely
\[
  Z_{a,b}^{\beta,\w} (x,x')
  \coloneqq \E\Bigl[ \exp\Bigl(\sum_{n=a+1}^{b} (\beta \w(n,S_n) -\lambda(\beta)) \Bigr)  \indic_{\{S_b =x'\}}\;\Bigm|\; S_a =x\Bigr] \,,
\]
and \(Z_{a,b}^{\beta,\omega}(f,x) = \sum_{z\in \Z^2} f(z) Z_{a,b}^{\beta,\omega}(z,x)\).
(We stress that the notation is slightly different from~\eqref{eq:partition-function-strip}.)
For any integers \(a<b\) and any probability measure \(\mu\) on \(\Z^2\) and any \(B\subset \Z^2\), we also introduce the notation
\[
  Z_{s,t}^{\beta,\w} (\mu ; B)
  \coloneqq \sum_{x\in \Z^2}\sum_{x'\in B} \mu(x)  Z_{s,t}^{\beta,\w} (x,x') \,,
\]
which is the partition function of a polymer with initial distribution \(\mu\) at time \(a\) and constrained to end in \(B\) at time \(b\).
With this notation, we can write that
\[
  Z_{N}^{\beta,\w}(f)
  =  \sum_{\cY_{\ell} \in (\Z^2)^{\ell} }Z_{M_{\ell}}^{\beta,\w}(f ; \cY_{\ell}) Z_{M_{\ell},N}^{\beta,\w}\bigl(\mu_{\cY_{\ell}}^{\beta,\w} ; \Z^2 \bigr) \,,
\]
where \(\mu_{\cY_{\ell}}^{\beta,\w}\) is the endpoint distribution of the polymer measure associated with the partition function \(Z_{M_{\ell}}^{\beta,\w}(f; \cY_{\ell})\) (hence it has support in \(B_{\ell}(y_{\ell})\)).
Using the standard inequality \((\sum_i z_i)^{1/2} \leq \sum_i z_i^{1/2}\) for non-negative \((z_i)\), we then get that
\[
  Z_{N}^{\beta,\w}(f)^{1/2}
  \leq  \sum_{\cY_{\ell} \in (\Z^2)^{\ell} }Z_{M_{\ell}}^{\beta,\w}(f ; \cY_{\ell} )^{1/2} Z_{M_{\ell},N}^{\beta,\w}\bigl(\mu_{\cY_{\ell}}^{\beta,\w} ; \Z^2 \bigr)^{1/2} \,.
\]
Taking the conditional expectation with respect to \(\mathcal{F}_{M_{\ell}}\), using translation invariance and since \(\mu_{\cY_{\ell}}^{\beta,\w}\) is \(\cF_{M_{\ell}}\)-measurable, we get that
\[
  \EE\Bigl[ Z_{M_{\ell},N}^{\beta,\w}\bigl(\mu_{\cY_{\ell}}^{\beta,\w} ; \Z^2 \bigr)^{1/2} \; \Bigm| \; \cF_{M_{\ell}} \Bigr]
  \leq  \sup_{\mu \in \cM_1^{\disc}(\sqrt{M_{\ell}})} \EE\bigl[Z_{N-M_{\ell}}^{\beta,\w}(\mu)^{1/2}\bigr] \,,
\]
so that
\begin{equation}
  \label{eq:uppercoarse}
  \EE\bigl[ Z_{N}^{\beta,\w}(f)^{1/2} \bigr]
  \leq  \sum_{\cY_{\ell} \in (\Z^2)^{\ell} } \EE\bigl[ Z_{M_{\ell}}^{\beta,\w}(f ; \cY_{\ell} )^{1/2} \bigr]  \sup_{\mu \in \cM_1^{\disc}(\sqrt{M_{\ell}})} \EE\bigl[Z_{N-M_{\ell}}^{\beta,\w}(\mu)^{1/2}\bigr] \,.
\end{equation}
We are therefore reduced to estimating a fractional moment along a skeleton \(\cY_{\ell}\).

With the notation introduced above, we have
\[
  Z_{M_{\ell}}^{\beta,\w}(f;\cY_{\ell})
  = Z_{M_{\ell-1}}^{\beta,\w}(f; \cY_{\ell-1}) \; Z_{M_{\ell-1},M_{\ell}}^{\beta,\w}\bigl(\mu_{\cY_{\ell-1}}^{\beta,\w} ; B_{\ell}(y_{\ell}) \bigr)\,,
\]
so that taking the conditional expectation with respect to \(\cF_{M_{\ell-1}}\), we get
\[
  \EE\Bigl[ Z_{M_{\ell}}^{\beta,\w}(f;\cY_{\ell})^{1/2} \; \Bigm| \; \cF_{M_{\ell-1}} \Bigr]
  \leq  Z_{M_{\ell-1}}^{\beta,\w}(f; \cY_{\ell-1})^{1/2}\!\!\!  \sup_{\mu \colon  \mathrm{supp}(\mu) \subset B_{\ell-1}(y_{\ell-1})}\!\!\! \EE\Bigl[ Z_{M_{\ell-1},M_{\ell}}^{\beta,\w}\bigl(\mu ; B_{\ell}(y_{\ell}) \bigr)^{1/2} \Bigr] \,,
\]
where the supremum is over probability distributions~\(\mu\).
Therefore, if we define for all \(1\leq i\leq \ell\)
\[
  \mathcal{Q}_{i}(x,y)
  \coloneqq \sup_{\mu \colon  \mathrm{supp}(\mu) \subset B_{i-1}(x)} \EE\Bigl[  Z_{0,M_i-M_{i-1}}^{\beta,\w}\bigl(\mu ;  B_{i}(y) \bigr)^{1/2} \Bigr] \,,
\]
and \(\mathcal{Q}_{0}(y) \coloneqq \sup_{f \in \cM_1^{\disc}(R)} \EE\bigl[  Z_{0,M_0}^{\beta,\w}\bigl(f ;  B_{0}(y) \bigr)^{1/2} \bigr]\),
then we get by iteration that
\[
  \sup_{f \in \cM_1^{\disc}(R)} \EE\Bigl[Z_{M_{\ell}}^{\beta, \w}(f;\cY_{\ell})^{1/2} \Bigr]
  \leq \mathcal{Q}_{0}(y_0) \prod_{i=1}^\ell \mathcal{Q}_i(y_{i-1},y_{i})  \,.
\]
Therefore, plugging into~\eqref{eq:uppercoarse} and using translation invariance we get that
\[
\begin{split}
    \sup_{f \in \cM_1^{\disc}(R)} &\EE\bigl[Z_{N}^{\beta, w}(f)^{1/2}\bigr] \\
    & \leq \sum_{(y_0,y_1,\ldots, y_\ell) \in (\Z^2)^{\ell+1}} \mathcal{Q}_{0}(y_0)  \prod_{i=1}^\ell\mathcal{Q}_i(y_{i-1},y_{i}) \times \sup_{f \in \cM_1^{\disc}(\sqrt{M_{\ell}})} \EE\bigl[Z_{N-M_{\ell}}^{\beta,\w}(f)^{1/2}\bigr]  \,.
\end{split}
\]

Since
\[
  \sum_{(y_0,y_1,\ldots, y_\ell) \in (\Z^2)^{\ell+1}} \mathcal{Q}_{0}(y_0) \prod_{i=1}^\ell \mathcal{Q}_i(y_{i-1},y_{i})
  \leq \biggl( \sum_{y \in \Z^2} \mathcal{Q}_0(y)\biggr) \prod_{i=1}^{\ell}\biggl( \sup_{x\in \Z^2} \sum_{y \in \Z^2} \mathcal{Q}_i(x,y) \biggr) \,,
\]
it only remains to show that \(\sum_{y \in \Z^2} \mathcal{Q}_0(y)\leq 20\) and that thanks to the condition~\eqref{eq:all-scales} we have
\begin{equation}
  \label{eq:coarse-one-scale}
  \forall 1 \leq i\leq \ell \,,
  \qquad \sup_{x\in \Z^2} \sum_{y \in \Z^2} \mathcal{Q}_i(x,y)
  \leq \e^{-1} \,.
\end{equation}

In order to prove~\eqref{eq:coarse-one-scale}, note that, by translation invariance, we may reduce to considering the case \(2x\sqrt{M_{i-1}}\leq \sqrt{M_i}\).
Simply applying Jensen's inequality, we have that for \(\mu\) supported in \(B_{i-1}(x)\),
\[
  \begin{split}
    \EE\bigl[  Z_{0,M_{i}-M_{i-1}}^{\beta,\w}\bigl(\mu ;  B_i(y) \bigr)^{1/2} \bigr]
     & \leq  \Bigl(\sum_{ z \in B_{i-1}(x)} \mu(z) \P_z\bigl(S_{M_{i}-M_{i-1}} \in B_i(y)\bigr) \Bigr)^{1/2}    \\
     & \leq \P\Bigl(S_{M_{i}-M_{i-1}} \in 2y \sqrt{M_i} + \llb -2\sqrt{M_i},2\sqrt{M_i}\llb^2 \Bigr)^{1/2} \, ,
  \end{split}
\]
where we have widened the ball around \(2y\sqrt{M_i}\) by \(\sqrt{M_i}\) to account for the worst case scenario for the starting point \(z\).
Now, we have that for all \(y\) such that \(|y|_1>2\), it holds:
\[
  \P\Bigl(S_{M_i} \in 2y \sqrt{M_i} + \llb -2\sqrt{M_i}, 2\sqrt{M_i}\llb^2 \Bigr)
  \leq \P \Bigl( \mathrm{SRW}_{M_i}
  \geq (2\abs{y}_1-4)\sqrt{M_i} \Bigr)
  \leq 2 \e^{- 2 (\abs{y}_1-2)^2} \,,
\]
where \(\mathrm{SRW}_n\) denotes the position at time \(n\) of a one-dimensional, simple, symmetric random walk, the first inequality follows by projecting the 2D random walk \(S_M\) on the diagonals \(e_1+e_2\) and \(e_1-e_2\), where \(e_1,e_2\) are the basis vectors of \(\R^2\) and the second inequality is the standard inequality \(\P \bigl( |\mathrm{SRW}_n | \geq k\bigr)\leq 2 \exp\bigl( -\frac{k^2}{2n}\bigr) \).

On the other hand, by assumption~\eqref{eq:all-scales} and translation invariance, we have that for any \(y\in \Z^2\), for any \(1\leq i \leq \ell\),
\[
  \mathcal{Q}_i(x,y)
  \leq \sup_{\mu \in \cM_1^{\disc}(\sqrt{M_{i-1}})} \EE\Bigl[  Z_{M_i-M_{i-1}}^{\beta,\w}(\mu)^{1/2} \Bigr]
  \leq \frac{1}{300} \,.
\]
Therefore, for any integer threshold \(K\ge 1\), we obtain that
\[
  \sup_{x\in \Z^2} \sum_{y \in \Z^2} \mathcal{Q}_i(x,y)
  \leq \sum_{\abs{y}_1 \leq K} \frac{1}{300} + \sum_{\abs{y}_1 > K}  \bigl(2 \e^{- 2(\abs{y}_1-2)^2}\bigr)^{1/2} \,.
\]
Now, it turns out that for \(K=4\) the sum of the two terms is smaller than \(\e^{-1}\).
This concludes the proof of~\eqref{eq:coarse-one-scale}.

On the other hand, using Jensen's inequality to get that \(\mathcal{Q}_0(y)\leq 1\) for \(\abs{y}_1 \leq 2\) and \(\mathcal{Q}_0(y)\leq (2\e^{-2 (\abs{y}_1-2)^2})^{1/2}\) for \(\abs{y}_1>2\) similarly as above (recall that \(M_0=1+R^2\) and that the initial distribution is \(f\in \mathcal{M}_1^{\mathrm{disc}}(R)\)), we get that 
\[
  \sum_{y \in \Z^2} \mathcal{Q}_0(y) \leq \sum_{\abs{y}_1 \leq 2} 1 + \sum_{\abs{y}_1 > 2}  \bigl(2 \e^{- 2(\abs{y}_1-2)^2}\bigr)^{1/2} \leq 20 \,,
\]
where the last inequality follows from a numerical evaluation of the sums.
This concludes the proof of \Cref{prop:coarse}.
\qed

\section{Second moment estimates}
\label{sec:second-moment}

This section collects several estimates on second moments of the partition function.
In particular, we prove \Cref{prop:var-SHF,prop:var-polymer} as well as \Cref{lem:var-strip}.
As a key step of the proof of \Cref{prop:var-polymer}, we prove a sharp estimate on the second moment of the point-to-plane partition function, see \Cref{lem:var-point-to-plane} below.
Many of the estimates are classical and can be found in the literature, but one key feature of \Cref{prop:var-polymer} (and \Cref{lem:var-point-to-plane}) that requires additional work is that the estimates are uniform in the parameters and in particular hold in the subcritical, critical and supercritical regime.

\subsection{Proof of \texorpdfstring{\Cref{prop:var-SHF}}{}}

Notice that, with \(G_{\th}(u) \coloneqq \int_0^{\infty} \frac{\e^{(\th -\gamma)s} s u^{s-1}}{\Gamma(s+1)} \dd s\) we have that \(\int_{0}^t G_{\th}(u) \dd u = \cV( t\, \e^{\th-\gamma})\).
Then, in view of \cite[Thm.~6.1]{CSZ25-icm}, we have that
\[
  \Cov(\mathscr{Z}_{t}^{\th}(\phi_1), \mathscr{Z}_{t}^{\th}(\phi_2))
  = \pi \int_0^t g_{2s}(\phi_1,\phi_2)  \cV( (t-s) \e^{\th-\gamma} )\dd s \,,
\]
with
\[
  g_{2s}(\phi_1,\phi_2)
  \coloneqq \int_{\R^2\times\R^2} \phi_1(x_1) \phi_2(x_2) g_{2s}(x_2-x_1) \dd x_1 \dd x_2 \,.
\]
\Cref{prop:var-SHF} then follows easily, by using that \(\cV( (t-s) \e^{\th-\gamma} ) \leq \cV(t\e^{\th-\gamma} )\) for the upper bound, and restricting the integral to \(0\le s\le(1-\e^{\gamma-1})t\) where \(\cV( (t-s) \e^{\th-\gamma} ) \geq \cV(t\e^{\th-1} )\) for the lower bound.
(Note that in \Cref{prop:var-SHF} we can thus choose  \(c_2=\e^{-1}\) and \(c_4=\e^{-\gamma}\).)
\qed

\subsection{Proof of \texorpdfstring{\Cref{prop:var-polymer,lem:var-strip}}{}}

The proofs will rely on a sharp estimate of the point-to-plane partition function that we state in the following lemma, of independent interest.
Its proof is postponed to the next subsection and requires to deal with some technical subtleties in order to obtain estimates that are uniform in the parameters.

\begin{lemma}
  \label{lem:var-point-to-plane}
  Define \(\cV_{N}^{\beta} \coloneqq \sigma^2(\beta) \EE\bigl[ (Z_N^{\beta,\w})^2 \bigr]\).
  Then, recalling the definition~\eqref{def:theta-N-beta} of \(\th(N,\beta)\), we have
  \begin{equation}
    \label{eq:sharp-second-moment}
    \cV_N^{\beta}
    = \pi\cV\Bigl( (1+o(1))\e^{\th(N,\beta)-\gamma} \Bigr)  \quad \text{ as } N\to\infty, \beta\downarrow 0\,.
  \end{equation}
  In particular \(\cV_N^{\beta}\) converges to \(\pi\cV(\e^{\th-\gamma})\) if \(\lim_{N\uparrow\infty,\beta\downarrow 0}\th(N,\beta) = \th \in \R\).
  More generally, there are constants \(c_1,c_2\) \(c_3,c_4\) such that, for any \(N\in \N\) and \(\beta \in (0,1)\),
  \begin{equation}
    \label{eq:second-moment-cst}
    c_1\,\cV\bigl( c_2\, \e^{\th(N,\beta)} \bigr)
    \leq \cV_{N}^{\beta}
    \leq c_3\,\cV\bigl( c_4\, \e^{\th(N,\beta)} \bigr) \,.
  \end{equation}
\end{lemma}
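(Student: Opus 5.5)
We expand the point-to-plane second moment through the renewal structure of two independent walks. Writing \(u(n)\coloneqq q_{2n}(0)=\P(S_{2n}=0)\), the chaos decomposition and Chapman--Kolmogorov give
\[
  \EE\bigl[(Z_N^{\beta,\w})^2\bigr]
  = \sum_{j\geq 0} \sigma^2(\beta)^{j} \sum_{1\leq n_1<\cdots<n_j\leq N} \prod_{i=1}^{j} u(n_i-n_{i-1}) \,.
\]
Set \(\lambda\coloneqq \sigma^2(\beta) R_N\), and let \(K_N(n)\coloneqq u(n)/R_N\), a probability on \(\llb 1,N\rrb\). Then \(\EE[(Z_N^{\beta,\w})^2]=\sum_{j\ge0}\lambda^j \P(\tau^{(N)}_j\leq N)\), where \(\tau^{(N)}\) is the renewal process with increment law \(K_N\). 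Since \(\pi/\sigma^2(\beta) = \pi R_N - \th(N,\beta)\), we have \(\lambda = (1-\th(N,\beta)/(\pi R_N))^{-1}\), which is the variable that controls \(\th\).

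\textbf{Step 1: the critical-window asymptotics \eqref{eq:sharp-second-moment}.} This is the Dickman-subordinator picture of \cite{CSZ19-Dickman}. Since \(u(n)\sim 1/(\pi n)\), the rescaled renewal \(\bigl(\tau^{(N)}_{\lfloor s\log N\rfloor}/N\bigr)_{s}\) converges to the Dickman subordinator \(Y_s\), whose density satisfies
\[
  \int_0^1 \P(Y_s\in \dd t)\,\dd s\,\e^{\th s}\,\text{-weighted}=\frac{\e^{(\th-\gamma)s}\, s\, t^{s-1}}{\Gamma(s+1)}.
\]
Writing \(j=s\log N\), the factor is \(\lambda^j=\exp\bigl(j\th/(\pi R_N)+O(j\th^2/R_N^2)\bigr)\approx \e^{\th s}\). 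Summing over \(j\) by a Riemann sum in \(s\) with mesh \(1/\log N\) therefore reproduces, after multiplying by \(\sigma^2\approx \pi/\log N\), the quantity \(\pi\int_0^1 G_\th(u)\,\dd u=\pi\cV(\e^{\th-\gamma})\). This gives the \(\th\in\R\) case; uniformity over compact \(\th\)-ranges follows from a local limit theorem for the renewal with errors uniform in \(s\) on compacts.

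\textbf{Step 2: extending to \(\th\to\pm\infty\).} This is where the real work is. For the \emph{subcritical} regime \(\th\to-\infty\) (\(\lambda<1\)), I would bound \(\P(\tau_j\le N)\le 1\) to get \(\sigma^2\sum_j \lambda^j = \sigma^2/(1-\lambda)=\pi/|\th|\) exactly, which matches \(\pi\cV(\e^{\th-\gamma})\sim \pi/|\th|\). For the lower bound, restrict to \(j\leq |\th|^{-1}\log N\cdot\) something. At that scale the \(\P(\tau_j\le N)\) are close to \(1\), since for \(s\to0\) we have \(\P(Y_s\le1)\to1\); the error must be controlled in a way that does not degrade the constants.

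For the \emph{supercritical} regime \(\th\to+\infty\), the target is \(\pi\e^{\e^{\th-\gamma}(1+o(1))}\). I would use a Laplace-transform / exponential-tilting argument. Choose \(\mu\) with
\[
  \lambda\sum_{n\leq N} K_N(n)\,\e^{-\mu n/N}=1 \,.
\]
Then \(\sum_j\lambda^j\P(\tau_j\le N)\le \e^{\mu}\sum_j\lambda^j\E[\e^{-\mu\tau_j/N}]\) gives an upper bound \(\approx \e^{\mu}\), with \(\mu\sim \e^{\th-\gamma}\) obtained from \(\sum_n u(n)\e^{-\mu n/N}\approx R_N-\frac1\pi\log\mu-\gamma/\pi\). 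The matching lower bound comes from the renewal theorem under the tilted (now proper) law. Note that \(\beta<1\) keeps \(R_N\) comparable to \(\log N\) only when \(N\) is large; for bounded \(N\) everything is of order \(\sigma^2\).

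\textbf{Step 3: the uniform bounds \eqref{eq:second-moment-cst}.} These follow by combining the three regimes: asymptotic equivalence outside a compact set of parameters, and continuity/positivity plus compactness on the remaining region. Small \(N\) or \(\beta\) bounded away from \(0\) give \(\cV_N^\beta\asymp\sigma^2(\beta)\asymp \cV(c\,\e^{\th})\), since \(\th\) is very negative there. The main obstacle is obtaining the supercritical and subcritical estimates with constants uniform in both \(N\) and \(\beta\). In particular, we need a uniform quantitative version of \(\sum_{n\le m}u(n)=\frac1\pi(\log m+\alpha)+O(1/m)\) with error control in the tilting equation.
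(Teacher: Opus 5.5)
Your architecture matches the paper's: a renewal representation of the second moment, an exponential tilt identifying the effective parameter $\mu\approx\e^{\th-\gamma}$, truncation in $j$ for the lower bounds, and \cite{CSZ19-Dickman} for the critical window. The paper uses the soft tilt $\Rhat(\lambda)$ where you use the hard cut-off $K_N$. Your subcritical upper bound $\cV_N^\beta\le\sigma^2(\beta)/(1-\sigma^2(\beta)R_N)=\pi/|\th(N,\beta)|$ is exact and cleaner than the paper's.

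However, the supercritical upper bound fails as written. With $\mu$ chosen so that $\lambda\sum_{n\le N}K_N(n)\e^{-\mu n/N}=1$, every term of $\sum_j\lambda^j\E[\e^{-\mu\tau_j/N}]$ equals $1$, so the series diverges. Truncating at $j\le N$ only gives $\sigma^2(\beta)(N+1)\e^{\mu}$, which is not $\e^{(1+o(1))\mu}$ unless $\log N=o(\mu)$. You must over-tilt. With $\mu_\varepsilon=(1+\varepsilon)\mu$ the series is geometric, with deficit $1-\lambda\E[\e^{-\mu_\varepsilon T/N}]=\sigma^2(\beta)\sum_{n\le N}u(n)\bigl(\e^{-\mu n/N}-\e^{-\mu_\varepsilon n/N}\bigr)\approx\sigma^2(\beta)\log(1+\varepsilon)/\pi$. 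This cancels the prefactor $\sigma^2(\beta)$ and gives $\cV_N^\beta\le(\pi+o(1))\,\e^{(1+\varepsilon)\mu}/\log(1+\varepsilon)$; then let $\varepsilon\downarrow0$. This is exactly the paper's tilt $(1+\varepsilon)\rho/N$ in \eqref{eq:general-soft-upper}.

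Likewise, the supercritical lower bound cannot be quoted from ``the renewal theorem''. The tilted increment law is a triangular array with mean $\approx N/(\mu\log(N/\mu))\to\infty$, so you need a quantitative substitute. The paper restricts to $k$ in a window around $N/\E[T_1]$ and uses Chebyshev with $\mathrm{Var}(\tau_k)\le CN^2/\rho$ to force $\tau_k\in[(1-3\delta)N,N]$, where $\delta=\rho^{-1/3}$. This loses only $\e^{O(\delta\rho)}=\e^{o(\rho)}$.

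More seriously, Step 3 mishandles $\beta$ bounded away from $0$. For fixed $\beta\ge\beta_0$ and $N\to\infty$, one has $\th(N,\beta)=\pi R_N-\pi/\sigma^2(\beta)\to+\infty$, not $-\infty$. There $\cV_N^\beta$ grows exponentially in $N$, so $\cV_N^\beta\asymp\sigma^2(\beta)$ is false. The region $\{\beta\ge\beta_0\}\times\N$ is not compact, and your asymptotics do not reach it, since they rely on $\mu/N\to0$, i.e.\ on the small-argument expansion of $\Rhat$. So \eqref{eq:second-moment-cst} is left unproven there.

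The paper treats this region separately, via the Lyapunov exponent $\mathtt F_2(\beta)$, monotonicity in $\beta$ and $N$, and the asymptotics $\mathtt F_2(\beta)\sim16\,\e^{-\pi}\e^{-\pi/\sigma^2(\beta)}$ from \cite{BN26}, which give $N\mathtt F_2(\beta)\asymp\e^{\th(N,\beta)}$. Alternatively, you could run your over-tilted bound uniformly for $\beta\in[\beta_0,1)$, where $\mu/N$ is of order one and $\mathtt F(\beta)\asymp\e^{-\pi/\sigma^2(\beta)}$ by continuity and positivity, and get the lower bound from monotonicity in $\beta$. Either way, some such argument must be supplied.

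Two minor points. The displayed Dickman identity in Step 1 is garbled: the density of $Y_s$ on $(0,1)$ is $\e^{-\gamma s}s\,t^{s-1}/\Gamma(s+1)$, and the weight $\e^{\th s}$ comes from $\lambda^j$. In the subcritical lower bound, Markov's inequality $\P(\tau_j>N)\le j\,\E[T]/N\le Cj/\log N$ already suffices, with $J$ chosen so that $J/\log N\to0$ and $J(1-\lambda)\to\infty$, as in the paper.
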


We are now ready to prove \Cref{prop:var-polymer,lem:var-strip}.

\begin{proof}[Proof of \Cref{prop:var-polymer}]
  The main idea to compute the variance is use the first collision decomposition~\eqref{eq:first-collision}. 
  Recalling the definition \(\cV_M^{\beta} \coloneqq \sigma^2(\beta) \EE\bigl[(Z_{M}^{\beta,\omega})^2\bigr]\) and the fact that \(M\mapsto \cV_M^{\beta}\) is non-decreasing (see \Cref{lem:monotone}), we easily have that
  \begin{equation}
    \label{eq:bounds-variance}
    \sum_{j=1}^{N/2} q_j(f,f)  \times \cV_{N/2}^{\beta}
    \leq \Var[Z_N^{\beta,\omega}(f)]
    \leq \sum_{j=1}^{N} q_j(f,f) \times \cV_N^{\beta} \,,
  \end{equation}
  where we recall that \(q_j(f,f)\) is defined in~\eqref{eq:first-collision}.
  Thanks to \Cref{lem:var-point-to-plane}, noting also that \(\th(N/2,\beta) \geq \th(N,\beta)- cst.\) (see \eqref{eq:th-asym}), we have that (possibly renaming the constants for the lower bound)
  \[
    c_1\, \cV\bigl(c_2 \,\e^{\th(N,\beta)}\bigr)
    \leq \cV_{N/2}^{\beta}
    \leq \cV_N^{\beta}
    \leq c_3\, \cV\bigl(c_4 \,\e^{\th(N,\beta)}\bigr) \,.
  \]
  It thus only remains to estimate \(\sum_{j=1}^{M} q_j(f,f)\) with \(M=N/2\), \(M=N\).
  Note that we have the following standard estimates on \(q_j(f,f)\) when \(f=U_R\).

  \begin{claim}
    There are two constants \(c,c'\) such that
    \[
      \frac{c'}{\max(j,1+R^2)}
      \leq q_j(U_R,U_R)
      \leq \frac{c}{\max(j,1+R^2)} \,.
    \]
  \end{claim}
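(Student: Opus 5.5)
Since \(q_j(U_R,U_R)=\P(S^{(1)}_0 - S^{(2)}_0 + \tilde S_{2j}=0)\), we will write it as \(\sum_{z} g_R(z)\, q_{2j}(z)\). Here \(g_R\coloneqq U_R * U_R(-\cdot)\) is the law of the difference of two independent uniform points in \(B(0,R)\cap\Z^2_{\mathrm{even}}\). The function \(g_R\) is supported in \(B(0,2R)\), satisfies \(g_R(z)\le \max_x U_R(x)\le C/(1+R^2)\), and satisfies \(g_R(z)\ge c/(1+R^2)\) for \(\abs{z}\le R\). Indeed, two balls of radius \(R\) whose centers are at distance at most \(R\) intersect in a region containing a ball of radius \(R/2\), hence \(\gtrsim 1+R^2\) even lattice points. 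We then treat the two regimes \(j\ge 1+R^2\) and \(j< 1+R^2\) separately, using the local limit theorem for the (periodic) simple random walk restricted to the even sublattice.

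\emph{Upper bound.} We use two crude estimates. First, \(q_{2j}(z)\le C/j\) uniformly in \(z\), and this gives \(q_j(U_R,U_R)\le C/j\). Second, \(\sum_z q_{2j}(z)=1\), which together with \(\sup g_R\le C/(1+R^2)\) gives \(q_j(U_R,U_R)\le C/(1+R^2)\). Taking the minimum yields \(C/\max(j,1+R^2)\). No care is needed here.

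\emph{Lower bound.} For \(j\ge 1+R^2\), we have \(\abs{z}\le 2R\le 2\sqrt{j}\) on the support of \(g_R\). The local limit theorem, combined with Gaussian lower bounds valid uniformly in the diffusive range, gives \(q_{2j}(z)\ge c/j\) for all even \(z\) with \(\abs{z}\le 2\sqrt{j}\) (and \(j\ge1\)). Summing against \(g_R\) then gives \(q_j(U_R,U_R)\ge c/j\). For \(1\le j<1+R^2\), we instead restrict the sum to \(\abs{z}\le R\), where \(g_R\ge c/(1+R^2)\). This gives
\[
q_j(U_R,U_R)\ge \frac{c}{1+R^2}\,\P\bigl(\abs{S_{2j}}\le R\bigr)\ge \frac{c}{1+R^2}\,\P\bigl(\abs{S_{2j}}\le \sqrt{j}\bigr)\ge \frac{c'}{1+R^2},
\]
where we used \(R\ge \sqrt{j}\) in this regime (up to the harmless \(+1\)). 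The last step follows from the CLT, or from Chebyshev's inequality since \(\E\abs{S_{2j}}^2=2j\). Parity is automatic because \(S_{2j}\) and \(z\) both lie in \(\Z^2_{\mathrm{even}}\).

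The only mildly delicate step is the uniform lower bound \(q_{2j}(z)\ge c/j\) for \(\abs{z}\le 2\sqrt{j}\), including small \(j\). For small \(j\) it holds trivially because the range is finite (e.g.\ \(j=1\): \(q_2(z)\ge 1/16\) on its support). For large \(j\) it follows from the local limit theorem with uniform error \(o(1/j)\), compared against the Gaussian density \(\frac{2}{\pi j}\e^{-\abs{z}^2/(2j)}\ge \frac{c}{j}\) on \(\abs{z}\le 2\sqrt{j}\). The small-\(R\) edge cases, e.g.\ \(R=0\) where \(g_0=\delta_0\) and the claim reduces to \(q_{2j}(0)\asymp 1/j\), are covered by the same argument.
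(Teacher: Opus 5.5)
Your argument is correct and is essentially the paper's proof. Both write \(q_j(U_R,U_R)\) as a sum of \(q_{2j}\) against the law of the difference of two uniform points, use that this law is at most \(\asymp 1/(1+R^2)\) everywhere and at least that on a ball of radius of order \(R\), and then invoke the local CLT when \(j\gtrsim R^2\) and the CLT when \(j\lesssim R^2\). One small slip: the Chebyshev alternative you offer for \(\P(\abs{S_{2j}}\le\sqrt{j})\ge c\) is vacuous, since \(\E\abs{S_{2j}}^2=2j\) exceeds the squared threshold \(j\). Keep the CLT, or sum your pointwise bound \(q_{2j}(z)\ge c/j\) over the \(\gtrsim j\) even points with \(\abs{z}\le\sqrt{j/2}\).
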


  \begin{proof}
    For the upper bound, recalling the definition~\eqref{eq:first-collision} of \(q_{j}(f,f)\), we write
    \[
      q_j(U_R,U_R)
      \leq \sum_{x,y\in B(R)} \frac{c}{R^2} \frac{c}{R^2} q_{2j}(x-y)
      \leq \frac{c''}{R^2} \sum_{z\in B(2R)} q_{2j}(z)
      = \frac{c''}{R^2} \P(|S_{2j}|
      \leq 2R)
    \]
    where in the change of variable \(z=x-y\), we have used that \(|z|\leq 2R\) and that \(\sum_{x\in B(R)} \frac{c}{R^2} \leq c'\).
    We now conclude by noticing that \(\P(|S_{2j}| \leq 2R) \leq c (\frac{R^2}{j} \wedge 1)\), either by the local CLT if \(j>R^2\) or by the CLT if \(j\leq R^2\).

    The lower bound follows similarly: restricting the sum to \(|x|\leq R/2\) and \(|x-y| \leq R/2\), we get that
    \[
      q_j(U_R,U_R)
      \geq \sum_{|x|\leq R/2, |y-x| \leq R/2} \frac{c}{R^2} \frac{c}{R^2} q_{2j}(y-x)
      \geq \frac{c'}{R^2} \P(|S_{2j}|
      \leq R/2) \,.
    \]
    Again, we conclude by noticing that \(\P(|S_{2j}| \leq R/2)\geq c (\frac{R^2}{j} \wedge 1 )\), by the local CLT if \(j>R^2\) or by the CLT if \(j\leq R^2\).
  \end{proof}

  \noindent
  With this claim at hand, we easily get that:
  \begin{itemize}
    \item In the case \(N\geq R^2\), \(\sum_{j=1}^{N} q_j(U_R,U_R) \leq  c\bigl( 1 + \sum_{j=R^2+1}^N \frac{1}{j} \bigr) \leq c \bigl(1+ \log ( \frac{N}{1+R^2}) \bigr)\);
    \item In the case \(N<R^2\), \(\sum_{j=1}^{N} q_k(U_R,U_R) \leq  c \, \frac{N}{1+R^2}\).
  \end{itemize}
  This can be summarized in the bound, valid in all regimes:
  \[
    \sum_{j=1}^{N} q_j(U_R,U_R)
    \leq C \log \Bigl( 1+ \frac{N}{1+R^2}\Bigr) \,.
  \]
  A similar lower bound follows analogously.
  We conclude that
  \[
    c \log \Bigl( 1+ \frac{N}{1+R^2}\Bigr)  \cV_{N/2}^{\beta}
    \leq \Var[Z_N^{\beta}(U_R)]
    \leq  C \log \Bigl( 1+ \frac{N}{1+R^2}\Bigr)  \cV_N^{\beta}\,,
  \]
  which concludes the proof of \Cref{prop:var-polymer}.
\end{proof}

\begin{remark}
  In fact, we can deal with more general initial distributions \(f\) in \Cref{prop:var-polymer}: in view of~\eqref{eq:bounds-variance} and thanks to \Cref{lem:var-point-to-plane}, one only needs to get upper and lower bounds on the quantity \(\sum_{j=1}^M q_j(f,f)\).
\end{remark}

\begin{proof}[Proof of \Cref{lem:var-strip}]
  Recall the notation~\eqref{eq:partition-function-strip} for the partition function \(Z_{a,b}^{\beta,\omega}(f)\) with disorder only in the strip \((a,b]\).
  Let \(f\in \cM_1^{\rm disc}(R)\) and \(b> a \geq 1+R^2\) and notice that \(Z_{a,b}^{\beta,\omega}(f)\) has the same distribution as \(Z_{b-a}^{\beta,\omega}(f\ast q_{a})\).

  Therefore, denoting \(L \coloneqq b-a\) and noticing that \(q_{j}(f\ast q_a,f\ast q_a) = q_{j+a}(f,f)\) by the Chapman--Kolmogorov property, we can apply~\eqref{eq:bounds-variance} to get that
  \[
    \sum_{j=a+1}^{a+L/2} q_j(f,f) \cV_{L/2}^{\beta}
    \leq \Var\bigl[Z_{a,b}^{\beta,\omega}(f)  \bigr]
    \leq \sum_{j=a+1}^{a+L} q_j(f,f) \cV_L^{\beta} \,.
  \]
  Again, thanks to \Cref{lem:var-point-to-plane}, we have that (possibly renaming the constants for the lower bound)
  \[
    c_1 \cV\bigl(c_2 \,\e^{\th(L,\beta)}\bigr)
    \leq \cV_{L/2}^{\beta}
    \leq \cV_L^{\beta}
    \leq c_3 \cV\bigl(c_4 \,\e^{\th(L,\beta)}\bigr) \,.
  \]
  Now, by the local CLT, we have that \(\frac{c}{j} \leq q_{2j}(x-y) \leq \frac{c'}{j}\) uniformly for \(x,y\in B(0,R)\cap \Z^2\) with the same parity and \(j\geq R^2\), so that we get \(\frac{c}{j} \leq q_j(f,f) \leq \frac{c'}{j}\) uniformly for \(f\in \cM_1^{\rm disc}(R)\) and \(j\geq R^2\).
  All together, recalling that \(a+L/2 = \frac{a+b}{2}\) and \(a+L = b\), we get that
  \[
    c' \log \Bigl(\frac{a+b}{2 a}\Bigr)
    \leq \sum_{j=a+1}^{\frac{a+b}{2}} \frac{c}{j}
    \leq \sum_{j=a+1}^{a+L/2} q_j(f,f)
    \leq \sum_{j=a+1}^{a+L} q_j(f,f)
    \leq \sum_{j=a+1}^{b} \frac{c'}{j}
    \leq c'' \log \Bigl(\frac{b}{a}\Bigr) \,,
  \]
  which shows that uniformly for \(f\in \cM_1^{\rm disc}(R)\) and \(b> a \geq 1+R^2\)
  \[
    c' \log \Bigl(\frac{a+b}{2 a}\Bigr) \cV\bigl(c_1 \,\e^{\th(b-a,\beta)}\bigr)
    \leq \Var\bigl[Z_{a,b}^{\beta,\omega}(f)  \bigr]
    \leq c'' \log \Bigl(\frac{b}{a}\Bigr) \cV\bigl(c_2 \,\e^{\th(b-a,\beta)}\bigr) \,.
  \]

  In the case where \(b\geq 2 a\), we simply use that \(\frac{a+b}{2 a} \geq \frac{b}{2a}\) to bound \(\log(\frac{a+b}{2 a}) \geq c\log(\frac{b}{a})\).
  Also, recalling \eqref{eq:th-asym}, we have \(\th(b-a,\beta) = \th(b,\beta) + \log (\frac{b-a}{b}) +O(1)\), with \(0\geq \log (\frac{b-a}{b}) \geq -\log 2\).
  This concludes the proof of \Cref{lem:var-strip}.
\end{proof}

\subsection{Proof of \texorpdfstring{\Cref{lem:var-point-to-plane}}{}}

Note that we do not need to treat the critical case \(\th(N,\beta)\to \th\in \R\) since in that case \cite[Prop.~A.1]{CSZ19-Dickman} gives that
\[
  \mathcal{V}_N^{\beta}
  \coloneqq \sigma^2(\beta) \EE\bigl[ (Z_N^{\beta,\omega})^2\bigr] \xrightarrow[\;N\to\infty\;]{} \pi \int_0^{\infty} \frac{\e^{(\th-\gamma)s}}{\Gamma(s+1)} \dd s
  \eqqcolon \pi \mathcal{V}(\e^{\th-\gamma}) \,,
\]
noting also that \(\sigma^2(\beta)\sim R_N^{-1} \sim \frac{\pi}{\log N}\) in this regime.

Additionally, in the case \(\beta \in (\beta_0,1)\) for some fixed \(\beta_0>0\), the proof of~\eqref{eq:second-moment-cst} is easy. 
Indeed, consider the Lyapunov exponent \(\mathtt{F}_2(\beta) = \lim_{N\to\infty} \frac1N \log\EE[ (Z_N^{\beta,\omega})^2]\).
Then, by monotonicity of \(\beta\mapsto \EE[(Z_N^{\beta,\omega})^2]\) and \(N\mapsto \EE[(Z_N^{\beta,\omega})^2]\), we have \(\e^{c_0 N \mathtt{F}_2(\beta)}\leq \e^{\frac12 N \mathtt{F}_2(\beta_0)}\leq \EE[ (Z_N^{\beta,\omega})^2] \leq \e^{N \mathtt{F}_2(\beta)}\) uniformly for \(\beta\in (\beta_0,1)\) and \(N\) large enough (how large depends on \(\beta_0\)).
Note that we may also bound \(c \e^{\th(N,\beta)} \leq N \mathtt{F}_2(\beta) \leq C \e^{\th(N,\beta)}\) for any \(\beta\in (0,1)\), recalling the definition~\eqref{def:theta-N-beta} of \(\th(N,\beta)\) and the fact that \(\mathtt{F}_2(\beta) \sim 16 \,\e^{-\pi} \e^{-\pi/\sigma^2(\beta)}\) as \(\beta\downarrow 0\), by \cite[Thm.~1.3]{BN26}.
Since \(\mathcal{V}(T) \sim \e^{T}\) as \(T\to\infty\), see~\eqref{eq:asymp-V}, this proves~\eqref{eq:second-moment-cst} for \(\beta\in (\beta_0,1)\), with constants depending on \(\beta_0\).

\smallskip

We now therefore focus on showing~\eqref{eq:sharp-second-moment} when \(N\to\infty\) and \(\beta\leq \beta_0\) for some sufficiently small \(\beta_0>0\); in practice, we consider \(\beta \downarrow 0\) in the following.
All together we thus need to treat the subcritical and supercritical regimes, \textit{i.e.}\ respectively \(\th(N,\beta)\to 0\) and \(\th(N,\beta) \to +\infty\), in a regime \(N\to\infty\), \(\beta\downarrow 0\).
Our first task is to re-write \(\mathcal{V}_N^{\beta}\), in a slightly different way than in \cite{CSZ19-Dickman}.
Using the chaos expansion~\eqref{def:multi-chaos}, \cite{CSZ19-Dickman} proves that
\begin{equation}
  \label{eq:decomp-second-moment}
  \mathcal{V}_N^{\beta}
  =  \sigma^2(\beta) \sum_{k=0}^{\infty} \sum_{\ell_1+\cdots+\ell_k\le N} \sigma^2(\beta)^{k} \prod_{i=1}^k q_{2\ell_i}(0)\,,
\end{equation}
recalling that \(q_{n}(x)\) is the random walk transition kernel.
For any \(\lambda>0\), let us introduce
\[
  \Rhat(\lambda)
  \coloneqq\sum_{n=1}^{\infty}\e^{-\lambda n} q_{2n}(0)\,,
\]
and let \(T_1,T_2,\ldots\) be i.i.d.\ random variables taking values in \(\{1,2,\ldots\}\) with law
\[
  \mathrm P_\lambda(T
  =n)
  = \frac{1}{\Rhat(\lambda)} \e^{-\lambda n} q_{2n}(0) \,.
\]
We also set \(\tau_k=T_1+\cdots+T_k\).
Now, note that for \(n_1+\cdots+n_k\le N\) we have
\[
  \prod_{i=1}^k q_{2\ell_i}(0)
  = \Rhat(\lambda)^k \e^{\lambda(n_1+\cdots+n_k)} \mathrm P_\lambda(T_1
  =\ell_1,\ldots,T_k
  =\ell_k) \,.
\]
Thus, after summing in~\eqref{eq:decomp-second-moment} we obtain the following soft-tilt identity: for any \(\beta>0\) and any \(\lambda>0\),
\begin{equation}
  \label{eq:soft-tilt-identity-0}
  \mathcal{V}_N^{\beta}
  = \sigma^2(\beta)\,\e^{\lambda N} \sum_{k=0}^{\infty} \bigl(\sigma^2(\beta)\Rhat(\lambda)\bigr)^k \mathrm E_\lambda \Bigl[ \e^{-\lambda(N-\tau_k)} \indic_{\{\tau_k\le N\}} \Bigr] \,.
\end{equation}
Note that in \cite{CSZ19-Dickman} (and later, e.g.\ in the variance estimate of \cite{BCT25}), a hard cut-off was used in the definition of the renewal process \(\tau\) instead of the soft tilt, but the latter is more convenient for our purposes.

Since \(\Rhat\) is continuous and strictly decreasing from \(+\infty\) to \(0\) it is invertible, for any \(\beta>0\) we can define \(\mathtt{F}(\beta)\) as the solution of \(\sigma^2(\beta) \hat{R}\bigl(\mathtt{F}(\beta)\bigr) =1\) (it is closely related to the Lyapunov exponent~\(\mathtt{F}_2(\beta)\) introduced above, see \cite[Eq.~(2.6)]{BN26}). 
We then introduce the following parameter:
\begin{equation}
  \label{def:rho-N-beta}
  \rho
  = \rho(N,\beta)
  \coloneqq N \mathtt{F}(\beta)  \quad \text{ or equivalently } \quad \sigma^2(\beta) \Rhat(\rho/N)
  =1 \,.
\end{equation}
We stress that \(\rho(N,\beta)\) will play a similar role as the parameter \(\th(N,\beta)\) in the sense that it is an interpolating parameter, but this new parameter will be more convenient for our purpose.

Since \(\pi R_N = \log N +\alpha+o(1)\) as \(N\to\infty\), see~\eqref{RN-asym}, we have by \cite[Thm.~3.9.1]{BGT89} that
\begin{equation}
  \label{eq:Rhat-asymptotic}
  \pi \Rhat(\lambda)
  = - \log \lambda +\alpha-\gamma+o(1) \qquad \text{ as } \lambda\downarrow 0\,.
\end{equation}
In particular, by definition of \(\mathtt{F}(\beta)\) we get that \(\frac{\pi}{\sigma^2(\beta)} = -\log(\mathtt{F}(\beta)) + \alpha-\gamma +o(1)\) as \(\beta\downarrow 0\).
Recalling the definition~\eqref{def:theta-N-beta} of \(\th(N,\beta)\) and the asymptotics~\eqref{RN-asym}, we thus get that, as \(N\to \infty\) and \(\beta\downarrow 0\)
\[
  \th(N,\beta)
  = \log N +\alpha + \log(\mathtt{F}(\beta)) - \alpha+\gamma +o(1)
  = \log (\rho(N,\beta)) +\gamma +o(1) \,.
\]
All together we get that in all subcritical, critical and supercritical regimes,
\begin{equation}
  \label{eq:rho-theta}
  \rho(N,\beta)
  = \e^{\th(N,\beta) - \gamma +o(1)} \qquad \text{ as } \quad N\to\infty,\, \beta \downarrow 0\,.
\end{equation}
With this notation, \eqref{eq:soft-tilt-identity-0} can be rewritten as:
\begin{equation}
  \label{eq:soft-tilt-identity}
  \mathcal{V}_N^{\beta}
  = \frac{\e^{\lambda N}}{\Rhat(\rho/N)} \sum_{k=0}^{\infty} \Bigl( \frac{\Rhat(\lambda)}{\Rhat(\rho/N)}\Bigr)^k \mathrm E_\lambda \Bigl[ \e^{-\lambda(N-\tau_k)} \indic_{\{\tau_k\le N\}} \Bigr]  \,,
\end{equation}
which supports our claim that \(\rho(N,\beta)\) is natural interpolating parameter.

\subsubsection*{Upper bounds in \Cref{lem:var-point-to-plane}}

Since \(\e^{-\lambda(N-\tau_k)}\le 1\) on the event \(\{\tau_k\le N\}\), we obtain from~\eqref{eq:soft-tilt-identity} the general upper bound
\begin{equation}
  \label{eq:general-soft-upper}
  \mathcal{V}_N^{\beta}
  \le \frac{\e^{\lambda N}}{\Rhat(\rho/N)} \sum_{k=0}^{\infty} \Bigl( \frac{\Rhat(\lambda)}{\Rhat(\rho/N)}\Bigr)^k
  = \frac{\e^{\lambda N}}{\Rhat(\rho/N)-\Rhat(\lambda)} \,,
\end{equation}
where the last identity holds for \(\lambda> \rho/N\), since then we have \(\Rhat(\lambda)< \Rhat(\rho/N)\).
We now treat the supercritical regime \(\rho(N,\beta) \to \infty\) and the subcritical regime \(\rho(N,\beta)\downarrow 0\) separately.

\smallskip
\textbullet\ 
In the supercritical regime \(\rho =\rho(N,\beta)\to\infty\), we fix \(\varepsilon>0\) and we apply \eqref{eq:general-soft-upper} with \(\lambda=\frac{(1+\varepsilon)\rho}{N}\).
Noting that \(\rho/N \to 0\) (since \(\beta\downarrow0\), recall~\eqref{def:rho-N-beta}), by~\eqref{eq:Rhat-asymptotic} we get 
\[
  \Rhat(\rho/N)-\Rhat((1+\varepsilon)\rho/N)
  = \frac{\log(1+\varepsilon)+o(1)}{\pi} \,.
\]
Thus,~\eqref{eq:general-soft-upper} yields that \(\mathcal{V}_N^{\beta}\le \frac{\pi+o(1)}{\log(1+\varepsilon)} \e^{(1+\varepsilon)\rho}\).
Since \(\varepsilon>0\) is arbitrary, we conclude that
\[
  \mathcal{V}_N^{\beta}
  \le \pi \, \e^{(1+o(1))\rho}
  = \pi\, \mathcal{V}\bigl((1+o(1)) \rho \bigr) \,,
\]
using that \(\mathcal{V}(T) \sim \e^{T}\) as \(T\to\infty\), see~\eqref{eq:asymp-V}.
Recalling that \(\rho=(1+o(1))\e^{\th(N,\beta)-\gamma}\), see~\eqref{eq:rho-theta}, this gives the desired upper bound in the supercritical regime \(\th(N,\beta) \to\infty\).

\smallskip
\textbullet\ In the subcritical regime \(\rho=\rho(N,\beta)\to0\), let us set \(\varepsilon_\rho\coloneqq\frac{1}{\log(1/\rho)}\): it verifies \(\varepsilon_\rho\downarrow0\) and \(\varepsilon_\rho>\rho\) for~\(\rho\) small.
We the apply~\eqref{eq:general-soft-upper} with \(\lambda =\varepsilon_\rho /N > \rho/N \).
Using again~\eqref{eq:Rhat-asymptotic}, we get
\[
  \Rhat(\rho/N)-\Rhat(\varepsilon_\rho/N)
  = \frac{\log(\varepsilon_\rho/\rho)+o(1)}{\pi}
  = (1+o(1)) \frac{\log(1/\rho)}{\pi},
\]
so that~\eqref{eq:general-soft-upper} yields that \(\mathcal{V}_N^{\beta}\le  (1+o(1)) \frac{\pi}{\log(1/\rho)}\, \e^{\varepsilon_\rho}\).
All together, since \(\varepsilon_{\rho} \downarrow0\), we get that
\[
  \mathcal{V}_N^{\beta}
  \le (1+o(1)) \frac{\pi}{\log(1/\rho)}
  =  \pi\, \mathcal{V}\bigl((1+o(1)) \rho \bigr) ,
\]
using that \(\mathcal{V}(T) \sim \frac{1}{\log(1/T)}\) as \(T\to0\), see~\eqref{eq:asymp-V}.
Recalling that \(\rho=(1+o(1))\e^{\th(N,\beta)-\gamma}\), see~\eqref{eq:rho-theta}, this gives the desired upper bound in the subcritical regime \(\th(N,\beta) \to -\infty\).

\subsubsection*{Lower bounds in \Cref{lem:var-point-to-plane}}

The idea is again to apply~\eqref{eq:soft-tilt-identity} with some well-chosen \(\lambda\) and to restrict the sum to a subset of indices.
We again treat the supercritical regime \(\rho(N,\beta) \to \infty\) and the subcritical regime \(\rho(N,\beta)\downarrow 0\) separately.

\smallskip
\textbullet\ In the supercritical regime \(\rho=\rho(N,\beta) \to \infty\), we use \eqref{eq:soft-tilt-identity} with the critical tilt \(\lambda = \rho/N\) and we obtain
\begin{equation}
  \label{eq:before-restriction}
  \mathcal{V}_N^{\beta}
  = \frac{\e^{\rho}}{\Rhat(\rho/N)}\, \sum_{k=0}^{\infty} \mathrm E_{\rho/N} \Bigl[ \e^{-\rho(1-\tau_k/N)} \indic_{\{\tau_k\le N\}} \Bigr]
  \geq \frac{\e^{\rho -3\delta \rho}}{\Rhat(\rho/N)}\, \sum_{k=0}^{\infty}\mathrm P_{\rho/N} \bigl((1-3\delta)N\le \tau_k\le N\bigr) \,,
\end{equation}
where we have set \(\delta = \rho^{-1/3}\).
We now restrict the sum to indices \(k\) in the set
\[
  K_\rho
  \coloneqq \biggl\{ k\in\N: (1-2\delta)\frac{N}{\mathrm E_{\rho/N}[T_1]}\le  k\le (1-\delta)\frac{N}{\mathrm E_{\rho/N}[T_1]} \biggr\}.
\]
Note that we have the following moment estimates
\begin{equation}
  \label{eq:moments-T}
  \mathrm E_{\rho/N}[T_1]
  = -\frac{\Rhat'(\rho/N)}{\Rhat(\rho/N)}
  = \frac{(1+o(1))\, N}{\rho\log(N/\rho)}\,, \qquad \mathrm E_{\rho/N}[T_1^2]
  = \frac{\Rhat''(\rho/N)}{\Rhat(\rho/N)}
  = \frac{(1+o(1))\, N^2}{\rho^2\log(N/\rho)} \,,
\end{equation}
where we have used that \(\pi\Rhat(\rho/N) = (1+o(1)) \log(N/\rho)\) by~\eqref{eq:Rhat-asymptotic}, and the following asymptotics for the derivatives of \(\Rhat\):
\[
  \Rhat'(\lambda)
  = -\frac{1+o(1)}{\pi\lambda}, \qquad \Rhat''(\lambda)
  = \frac{1+o(1)}{\pi\lambda^2} \qquad \text{ as } \lambda\downarrow 0 \,.
\]
In particular, it gives that 
\[
|K_{\rho}| = \frac{\delta N}{\mathrm E_{\rho/N}[T_1]} = (1+o(1)) \delta \rho \log(N/\rho) \,.
\]
Additionally for \(k\in K_\rho\) it holds that \((1-2\delta)N\le k\mathrm E_{\rho/N}[T_1]\le (1-\delta)N\) and \(\mathrm{Var}_{\rho/N}(\tau_k)\le C\frac{N^2}{\rho}\).
Therefore Chebyshev's inequality gives that uniformly for \(k\in K_\rho\),
\[
  \mathrm P_{\rho/N} \bigl((1-3\delta)N\le \tau_k\le N\bigr)
  \ge 1-\frac{\mathrm{Var}_{\rho/N}(\tau_k)}{\delta^2N^2}
  \ge 1-\frac{C}{ \delta^2 \rho}
  =1-C\rho^{-1/3}.
\]

All together, going back to~\eqref{eq:before-restriction}, we get that 
\[
  \mathcal{V}_N^{\beta}
  \geq (1-C\rho^{-1/3}) \e^{\rho -3\delta \rho}  \frac{|K_{\rho}|}{\hat{R}(\rho/N)} = (1+o(1)) \pi \, \delta \rho\, \e^{(1-o(1)) \rho} \,,
\]
where we have again used that \(\pi\Rhat(\rho/N) = (1+o(1)) \log(N/\rho)\) by~\eqref{eq:Rhat-asymptotic}.
Since \(\delta\rho = \e^{o(\rho)}\), this therefore gives
\[
  \mathcal{V}_N^{\beta}
  \ge \pi\, \e^{(1+o(1))\rho}
  = \pi\, \mathcal{V}\bigl((1+o(1)) \rho \bigr) \,,
\]
using again that \(\mathcal{V}(T) \sim \e^{T}\) as \(T\to\infty\), see~\eqref{eq:asymp-V}.
Recalling that \(\rho=(1+o(1))\e^{\th(N,\beta)-\gamma}\), see~\eqref{eq:rho-theta}, this gives the desired lower bound in the supercritical regime \(\th(N,\beta) \to\infty\).

\smallskip

\textbullet\ In the subcritical regime \(\rho = \rho(N,\beta) \to 0\), we use \eqref{eq:soft-tilt-identity} with \(\lambda=1/N>\rho/N\).
Observing that \(\e^{-(1-\tau_k/N)}\ge \e^{-1}\) on the event \(\{\tau_k\le N\}\), we thus get
\begin{equation}
  \label{eq:sum-restriction-1}
  \mathcal{V}_N^{\beta}
  \geq \frac{1}{\Rhat(\rho/N)} \sum_{k=0}^{L_N} \Bigl( \frac{\Rhat(1/N)}{\Rhat(\rho/N)}\Bigr)^k \mathrm P_{1/N}(\tau_k\le N)  \,,
\end{equation}
where we have also restricted the sum to \(k\leq L_N\), for some sequence \((L_N)_{N\geq 1}\).
The sequence \((L_N)_{N\geq 1}\) is chosen such that 
\[
\frac{L_N}{\log N} \to 0 
\qquad \text{ and }\qquad
L_N \frac{\log(1/\rho)}{\log (N/\rho)} \to +\infty \,.
\]
Note that this choice is possible because \(\frac{\log(1/\rho)}{\log (N/\rho)} \log N \to +\infty\) in the regime \(\rho\to 0\), \(N\to\infty\).

Now, note that, by \eqref{eq:moments-T}, we have
\[
  \mathrm E_{1/N}[T_1]
  = -\frac{\Rhat'(1/N)}{\Rhat(1/N)}
  = \frac{N(1+o(1))}{\log N}.
\]
Therefore, by Markov's inequality we obtain that
\[
  \inf_{k<L_N}\mathrm P_{1/N}(\tau_k\le N)
  =  \mathrm P_{1/N}(\tau_{L_N}\le N)
  \geq 1 - (1+o(1)) \frac{L_N}{\log N}
  = 1-o(1) \,,
\]
using that \(L_N/\log N\to 0\) for the last identity.

Coming back to the inequality~\eqref{eq:sum-restriction-1}, since \(\Rhat(1/N)<\Rhat(\rho/N)\), we have that
\[
  \mathcal{V}_N^{\beta}
  \geq (1-o(1)) \frac{1}{\Rhat(\rho/N)} \sum_{k=0}^{L_N} \Bigl( \frac{\Rhat(1/N)}{\Rhat(\rho/N)}\Bigr)^k
  = (1-o(1))  \frac{1 - \Bigl( \frac{\Rhat(1/N)}{\Rhat(\rho/N)}\Bigr)^{L_N+1}}{\Rhat(\rho/N) -\Rhat(1/N)} \,.
\]
Now, by \eqref{eq:Rhat-asymptotic} we have that
\[
  \Rhat(\rho/N)-\Rhat(1/N)
  = \frac{\log(1/\rho)+o(1)}{\pi}\,.
\]
On the other hand, using this identity together with \(\pi\Rhat(\rho/N) = (1+o(1)) \log(N/\rho)\), we also have
\[
  \frac{\Rhat(1/N)}{\Rhat(\rho/N)}
  = 1- \frac{\Rhat(\rho/N)-\Rhat(1/N)}{\Rhat(\rho/N)}
  = 1 - (1+o(1)) \frac{\log(1/\rho)}{\log(N/\rho)} \,.
\]
Since \(L_N \frac{\log(1/\rho)}{\log (N/\rho)} \to +\infty\), this shows that \(\bigl(\frac{\Rhat(1/N)}{\Rhat(\rho/N)}\bigr)^{L_N+1} \to 0\).

All together, we end up with
\[
  \mathcal{V}_N^{\beta}
  \ge (1+o(1)) \frac{\pi}{\log(1/\rho)}
  = \pi\, \mathcal{V}((1+o(1)) \rho)\,,
\]
using again that \(\mathcal{V}(T) \sim \frac{1}{\log(1/T)}\) as \(T\to0\), see~\eqref{eq:asymp-V}.
Recalling that \(\rho=(1+o(1))\e^{\th(N,\beta)-\gamma}\), see~\eqref{eq:rho-theta}, this gives the desired lower bound in the subcritical regime \(\th(N,\beta) \to -\infty\).
\qed

\subsection*{Acknowledgements}
We thank Francesco Caravenna and Rongfeng Sun for numerous helpful discussions.
We also thank Clément Cosco, Shuta Nakajima and Ofer Zeitouni for interesting discussions about the inhomogeneous scales of the coarse-graining procedure. 
Q.B. acknowledges the support of Institut Universitaire de France and ANR Local (ANR-22-CE40-0012-02).
N.T. acknowledges the support of INdAM/GNAMPA.
The work of N.Z. was partially supported by EPSRC grant APP57502. 
N.Z. also acknowledges the support and hospitality of the National Centre for Theoretical Sciences, Taipei, where parts of this work were completed.

\printbibliography

\end{document}